\numberwithin{equation}{section}
\def\@cite#1#2{{\m@th\upshape\bfseries%
[{#1\if@tempswa{\m@th\upshape\mdseries, #2}\fi}]}}
\theoremstyle{plain}
\newtheorem{theorem}{Theorem}[section]
\newtheorem{corollary}[theorem]{Corollary}
\newtheorem{proposition}[theorem]{Proposition}
\newtheorem{lemma}[theorem]{Lemma}
\newtheorem{hypothesis}{Hypothesis}
\theoremstyle{definition}
\newtheorem{definition}[theorem]{Definition}
\newtheorem{example}[theorem]{Example}
\newtheorem{question}{Question}
\theoremstyle{remark}
\renewcommand{\phi}{\varphi}  
\renewcommand{\epsilon}{\varepsilon}  
\newcommand{\eps}{{\varepsilon}}
\newcommand{\A}{{\mathcal{A}}}
\newcommand{\B}{{\mathcal{B}}}
\newcommand{\F}{{\mathcal{F}}}
\renewcommand{\H}{{\mathcal{H}}}
\newcommand{\J}{{\mathcal{J}}}
\renewcommand{\L}{{\mathcal{L}}}
\newcommand{\Q}{{\mathcal{Q}}}
\newcommand{\R}{{\mathcal{R}}}
\renewcommand{\S}{{\mathcal{S}}}
\newcommand{\X}{{\mathcal{X}}}
\newcommand{\Y}{{\mathcal{Y}}}
\newcommand{\rA}{{\mathrm{A}}}
\newcommand{\rC}{{\mathrm{C}}}
\newcommand{\bB}{\mathbb{B}}
\newcommand{\bC}{\mathbb{C}}
\newcommand{\bD}{\mathbb{D}}
\newcommand{\bN}{\mathbb{N}}
\newcommand{\bM}{\mathbb{M}}
\newcommand{\bT}{\mathbb{T}}
\newcommand{\bZ}{\mathbb{Z}}
\renewcommand{\bZ}{\mathbb{Z}}
\newcommand{\fA}{{\mathfrak{A}}}
\newcommand{\fB}{{\mathfrak{B}}}
\newcommand{\fC}{{\mathfrak{C}}}
\newcommand{\fD}{{\mathfrak{D}}}
\newcommand{\fH}{{\mathfrak{H}}}
\newcommand{\fK}{{\mathfrak{K}}}
\newcommand{\fM}{{\mathfrak{M}}}
\newcommand{\fN}{{\mathfrak{N}}}
\newcommand{\fS}{{\mathfrak{S}}}
\newcommand{\fV}{{\mathfrak{V}}}
\newcommand{\qand}{\quad\text{and}\quad}
\newcommand{\ol}{\overline}
\newcommand{\id}{{\operatorname{id}}}
\newcommand{\Mult}{{\operatorname{Mult}}}
\newcommand{\ca}{\mathrm{C}^*}
\begin{document}
%%%%%%%%%%%%%%%%%%%%%%%%%%%%%%%%

%%%%%%%%%%%%%%%%%%%%%%%%%%%%%%%%
\title[Finite-dimensional approximations and semigroup coactions]{Finite-dimensional approximations and semigroup coactions for operator algebras}

\author[R. Clou\^atre]{Rapha\"el Clou\^atre}
\address{Department of Mathematics \\ University of Manitoba \\ Winnipeg \\ MB \\ Canada}
\email{Raphael.Clouatre@umanitoba.ca}

\author[A. Dor-On]{Adam Dor-On}
\address{Department of Mathematics\\ University of Copenhagen\\ Copenhagen\\ Denmark}
\email{adoron@math.ku.dk}

\subjclass[2020]{Primary 47L55, 46L07; Secondary 47A20, 20Mxx, 47B32}
\keywords{Finite-dimensional approximation, residual finite-dimensionality, Exel--Loring approximation, semigroup coaction, semigroup algebras, algebras of functions}

\thanks{The first author was partially supported by an NSERC Discovery grant. The second author was partially supported by NSF grant DMS-1900916 and by the European Union's Horizon 2020 Marie Sk\l{}odowska-Curie grant No 839412.}

\begin{abstract}
The residual finite-dimensionality of a $\rC^*$-algebra is known to be encoded in a topological property of its space of representations, stating that finite-dimensional representations should be dense therein. We extend this paradigm to general (possibly non-selfadjoint) operator algebras. While numerous subtleties emerge in this greater generality, we exhibit novel tools for constructing finite-dimensional approximations. One such tool is a notion of a residually finite-dimensional coaction of a semigroup on an operator algebra, which allows us to construct finite-dimensional approximations for operator algebras of functions and operator algebras of semigroups. Our investigation is intimately related to the question of when residual finite-dimensionality of an operator algebra is inherited by its maximal $\rC^*$-cover, which we establish in many cases of interest.
\end{abstract}

\maketitle

\section{Introduction}

In the classification program for certain classes of concrete non-selfadjoint operator algebras, a common theme is the analysis of their finite-dimensional representations; see for instance \cites{DRS2011, hartz2017isom, doron2018, SSS2018} and the references therein. Although this is a perfectly natural and effective approach in these situations, in general it is not entirely clear to what extent finite-dimensional representations truly capture the structure of a given operator algebra. Indeed, one has to wonder whether, and to what extent, finite-dimensional representations are sufficient to determine the behavior of arbitrary representations by approximation. This is the case for operator algebras that are residually finite-dimensional.

An operator algebra is said to be \emph{residually finite-dimensional} (or RFD) if it admits a completely norming set of representations on finite-dimensional Hilbert spaces. For $\rC^*$-algebras, this is a classical notion that has been thoroughly investigated. Yet, the general notion has received comparatively little attention beyond \cite{mittal2010} and \cite{AHMR2020subh}, both of which focus on concrete algebras of functions. In \cite{CR2018rfd} a systematic study of RFD operator algebras was undertaken, where several operator algebra were shown to be RFD. Even for concrete classes of operator algebras it is challenging to characterize which of them are RFD, and this provides the impetus for developing alternative characterizations of this notion. However, at present there are no such tools available for general operator algebras. This stands in sharp contrast with the more classical $\rC^*$-algebraic setting where various conditions have been shown to be equivalent to residual finite-dimensionality \cites{EL1992, archbold1995, hadwin2014, CS2019}. In this paper, we provide new characterizations that apply to general operator algebras.

Our approach is inspired by the work of Exel and Loring \cite{EL1992}, which we discuss next. Let $\fA$ be a $\rC^*$-algebra, and recall that the spectrum of $\fA$ is the set $\widehat \fA$ of unitary equivalence classes of irreducible $*$-representations, together with the point strong operator topology. In \cite{EL1992} it is shown that $\fA$ is RFD if and only if for every $*$-representation there exists a net $(\pi_\lambda)$ of finite dimensional $*$-representations of $\fA$ that converges to $\pi$ in the point strong operator topology. Following \cite{EL1992}, it was shown by Archbold \cite{archbold1995} that a $\rC^*$-algebra $\fA$ is RFD if and only if the set of unitary equivalence classes of finite-dimensional irreducible $*$-representations is dense in $\widehat\fA$. Hence, the two topological characterizations of residual finite-dimensionality, one in terms of irreducible $*$-representations, and another in terms of arbitrary $*$-representations, provides a fair amount of additional flexibility.

The main driving force behind the present paper is the investigation of those representations of general operator algebras that admit a finite-dimensional approximation of the type described in the previous paragraph. As we will see, there are subtleties intrinsic to working with general operator algebras (as opposed to $\rC^*$-algebras), wherein there are several different reasonable interpretations of what a finite-dimensional approximation should be. Interestingly, this flexibility allows one to connect the existence of finite-dimensional approximations to the residual finite-dimensionality of various $\rC^*$-covers for the operator algebra. This is eminently desirable, as RFD $\rC^*$-algebras are typically much better understood. We also note that the properties of the maximal and minimal $\rC^*$-covers being RFD have been explored previously in \cites{CR2018rfd, CTh2022}, and the findings of these papers aptly illustrate the depth of the problem. Therefore, our analysis based on the existence of finite-dimensional approximations sheds light on some unresolved problems.

Next, we describe the organization of the paper. In Section \ref{S:fdimapprox}, we introduce various notions of finite-dimensional approximations, which we briefly mention here. In what follows, all representations of operator algebras are completely contractive. Let $\A$ be an operator algebra and let $\pi:\A\to B(\fH)$ be a representation. Let
\[
\pi_\lambda:\A\to B(\fH), \quad \lambda\in \Lambda
\]
be a net of representations with the property that the space $\rC^*(\pi_\lambda(\A))\fH$ is finite-dimensional for every $\lambda\in \Lambda$. We say that the net $(\pi_\lambda)$ is a \emph{finite-dimensional approximation} for $\pi$ if $(\pi_\lambda(a))$ converges to $\pi(a)$ in the SOT for every $a\in \A$. If, in addition, we have that $(\pi_\lambda(a)^*)$ converges to $\pi(a)^*$ in the SOT for every $a\in \A$, then we say that $(\pi_\lambda)$ is an \emph{Exel--Loring approximation} for $\pi$. These two notions coincide when $\A$ is a $\rC^*$-algebra. Finite-dimensional approximations do not necessarily arise in geometrically transparent ways (see Example \ref{E:seminv}) and thus, constructing such approximations can be highly non-trivial. Consequently, it is beneficial to identify operations that preserve the existence of these approximations, and this is the focus of the rest of Section \ref{S:fdimapprox}.

In Section \ref{S:charRFD}, we utilize finite-dimensional approximations to analyze the residual finite-dimensionality of general operator algebras. Recall that the \emph{$\rC^*$-envelope} of $\A$ is the smallest $\rC^*$-algebra generated by a copy $\A$ and it is denoted by $\rC^*_e(\A)$. As explained in Subsection \ref{SS:C*cov}, it can be constructed with the aid of so-called \emph{extremal} representations of $\A$. Using these ideas, we can now state one of our main results, which is a version of \cite[Theorem 2.4]{EL1992} that characterizes residual finite-dimensionality for general operator algebras (see Theorems \ref{T:ELOA} and \ref{T:RFDcharC*env}).

\begin{theorem}\label{T:mainA}
Let $\A$ be a separable operator algebra. Then, the following statements are equivalent.
\begin{enumerate}[{\rm (i)}]
\item The algebra $\A$ is RFD.
\item  Every extremal representation of $\A$ has a finite-dimensional approximation.
\item  Every extremal representation of $\A$ has an Exel--Loring approximation.
\item If $\pi$ is a $*$-representation of $\rC^*_e(\A)$, then $\pi|_\A$ admits an Exel--Loring approximation.
\end{enumerate}
\end{theorem}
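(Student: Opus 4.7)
I would prove the theorem by establishing the cycle (i) $\Rightarrow$ (iv) $\Rightarrow$ (iii) $\Rightarrow$ (ii) $\Rightarrow$ (i). The implication (iii) $\Rightarrow$ (ii) is immediate, as an Exel--Loring approximation is a fortiori a finite-dimensional approximation. For (iv) $\Rightarrow$ (iii), recall that every extremal representation $\rho$ of $\A$ has the unique extension property with respect to $\rC^*_e(\A)$, so $\rho = \hat\rho|_\A$ for a unique $*$-representation $\hat\rho : \rC^*_e(\A) \to B(\fH)$; the Exel--Loring approximation of $\hat\rho|_\A$ furnished by (iv) is then the desired approximation of $\rho$.

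For (ii) $\Rightarrow$ (i), the essential input is that, in the separable setting, the collection of extremal representations is completely norming for $\A$, as arises in the Dritschel--McCullough/Davidson--Kennedy construction of $\rC^*_e(\A)$. Given $a \in M_n(\A)$ and $\eps > 0$, choose an extremal representation $\pi : \A \to B(\fH)$ with $\|(\id \otimes \pi)(a)\| > \|a\| - \eps$ and apply (ii) to obtain a finite-dimensional approximation $(\pi_\lambda)$. Since $\rC^*(\pi_\lambda(\A))$ is selfadjoint, the finite-dimensional subspace $\E_\lambda := \rC^*(\pi_\lambda(\A))\fH$ is \emph{reducing} for $\pi_\lambda(\A)$ and $\pi_\lambda$ vanishes on $\E_\lambda^\perp$. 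Therefore the compression $\rho_\lambda := \pi_\lambda|_{\E_\lambda}$ is a genuine finite-dimensional representation of $\A$ with $\|(\id \otimes \rho_\lambda)(a)\| = \|(\id \otimes \pi_\lambda)(a)\|$, and SOT lower semicontinuity of the norm yields $\|(\id \otimes \rho_\lambda)(a)\| > \|a\| - 2\eps$ for some $\lambda$. Sweeping over $a$ and $\eps$ supplies a completely norming family of finite-dimensional representations, proving that $\A$ is RFD.

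The crux is (i) $\Rightarrow$ (iv), which I would reduce to the classical Exel--Loring theorem for $\rC^*$-algebras \cite{EL1992} by manufacturing an RFD $\rC^*$-cover of $\A$. Let $\{\rho_i : \A \to B(\fH_i)\}$ be a family of finite-dimensional representations provided by the RFD hypothesis, with $\bigoplus_i \rho_i : \A \to \prod_i B(\fH_i)$ completely isometric, and set $\fB := \rC^*(\bigoplus_i \rho_i(\A))$ inside $\prod_i B(\fH_i)$. The coordinate projections restrict to finite-dimensional $*$-representations of $\fB$ whose direct sum is faithful, so $\fB$ is separable and residually finite-dimensional, and by construction it is a $\rC^*$-cover of $\A$; universality of $\rC^*_e(\A)$ then supplies a surjective $*$-homomorphism $q : \fB \to \rC^*_e(\A)$ that restricts to the identity on $\A$. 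Given a $*$-representation $\pi : \rC^*_e(\A) \to B(\fH)$, the classical Exel--Loring theorem applied to the pullback $\tilde\pi := \pi \circ q$ produces a net of finite-dimensional $*$-representations $\sigma_\lambda : \fB \to B(\fH)$ with $\sigma_\lambda(x) \to \tilde\pi(x)$ in SOT for every $x \in \fB$; convergence of adjoints is automatic since the $\sigma_\lambda$ are $*$-representations. The restrictions $\pi_\lambda := \sigma_\lambda|_\A$ are completely contractive representations of $\A$ satisfying $\rC^*(\pi_\lambda(\A))\fH \subseteq \sigma_\lambda(\fB)\fH$, hence finite-dimensional, and they Exel--Loring approximate $\pi|_\A = \tilde\pi|_\A$.

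The main obstacle is (i) $\Rightarrow$ (iv). The subtlety is that one cannot simply invoke Exel--Loring on $\rC^*_e(\A)$ itself, since residual finite-dimensionality does not in general pass to quotients, and whether $\A$ being RFD forces $\rC^*_e(\A)$ to be RFD is a substantive open problem (addressed only in special cases elsewhere in the paper). The construction of the auxiliary cover $\fB$, which surjects \emph{onto} $\rC^*_e(\A)$ rather than being quotiented by it, is precisely what allows one to transport the classical Exel--Loring approximation for $\fB$ back to the desired Exel--Loring approximation of $\pi|_\A$.
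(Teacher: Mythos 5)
Your proposal is correct, and three of its four implications coincide with the paper's argument: (iii)$\Rightarrow$(ii) is trivial, (ii)$\Rightarrow$(i) is exactly the paper's compression of $\pi_\lambda$ to the reducing subspace $\rC^*(\pi_\lambda(\A))\fH$ combined with a completely norming family of extremal representations, and (iv)$\Rightarrow$(iii) is the paper's use of the fact that extremal representations extend to $*$-representations of $\rC^*_e(\A)$ (Theorem \ref{T:extreps}(i); note you only need \emph{existence} of the extension, not uniqueness, and in the non-unital case even existence requires the Meyer unitization step carried out there). Where you genuinely diverge is the crux (i)$\Rightarrow$(iv). The paper (Theorem \ref{T:RFDcharC*env}) first manufactures, via Arveson's boundary-representation machinery applied to the unitization, a single injective $*$-representation of $\rC^*_e(\A)$ on a separable space whose restriction to $\A$ is extremal, obtains an Exel--Loring approximation for that restriction from Theorem \ref{T:ELOA}, and then transfers to arbitrary $*$-representations of $\rC^*_e(\A)$ by the Voiculescu-based principle of Theorem \ref{T:Voiculescu}; this is precisely where separability of $\A$ enters. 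You instead exploit the co-universal property of the envelope: the canonical surjection $q:\fB\to\rC^*_e(\A)$ from an RFD $\rC^*$-cover $(\fB,\iota)$ satisfies $q\circ\iota=\eps$, so for any $*$-representation $\pi$ of $\rC^*_e(\A)$ the restriction $\pi|_\A$ extends (through $\iota$) to the $*$-representation $\pi\circ q$ of the RFD algebra $\fB$ --- exactly the hypothesis of Lemma \ref{L:extEL} --- and the classical Exel--Loring theorem finishes the argument. This bypasses boundary representations and Voiculescu's theorem entirely and, at the level of generality at which the paper itself invokes \cite[Theorem 2.4]{EL1992} in Lemma \ref{L:extEL}, it shows that separability is not actually needed for (i)$\Leftrightarrow$(iv); if one is fastidious about applying the classical theorem on a non-separable $\fH$, decompose $\pi\circ q$ into its degenerate part and cyclic summands and use Lemma \ref{L:directsum}. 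What the paper's longer route buys is the reusable transfer principle of Theorem \ref{T:Voiculescu}, which is indispensable later (e.g.\ in Theorem \ref{T:coactionC*maxRFD}), where one only has an Exel--Loring approximation for a single injective representation and no surjection onto the relevant algebra to pull back along.
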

The curious reader may wonder how the statements appearing above relate to having an RFD $\rC^*$-envelope. It is easy to construct examples where $\A$ is RFD while $\rC^*_e(\A)$ is not, but a precise condition on $\A$ which is equivalent to $\rC^*_e(\A)$ being RFD is unknown; see \cites{CTh2022} and \cite[Proposition 4.1]{AHMR2020subh} for recent related results.

At the other extreme in the scale of $\rC^*$-algebras generated by a copy of $\A$ is the \emph{maximal} $\rC^*$-cover, denoted by $\rC^*_{\max}(\A)$. Notably, the existence of Exel--Loring approximations can be used to characterize the residual finite-dimensionality of $\rC^*_{\max}(\A)$, as shown in Theorem \ref{T:ELC*max}.

\begin{theorem}\label{T:mainB}
 Let $\A$ be an operator algebra. Then $\rC^*_{\max}(\A)$ is RFD if and only if every representation of $\A$ admits an Exel--Loring approximation.
 \end{theorem}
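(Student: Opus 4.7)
The plan is to exploit the universal property of $\rC^*_{\max}(\A)$: every completely contractive representation of $\A$ extends uniquely to a $*$-representation of $\rC^*_{\max}(\A)$, and every $*$-representation of $\rC^*_{\max}(\A)$ restricts to a completely contractive representation of $\A$. Combined with the Exel--Loring characterization of RFD $\rC^*$-algebras \cite[Theorem 2.4]{EL1992} recalled in the introduction, the problem reduces to a transfer of approximations between $\A$ and $\rC^*_{\max}(\A)$.

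For the forward implication, I would take a representation $\pi:\A\to B(\fH)$, extend it to $\tilde\pi:\rC^*_{\max}(\A)\to B(\fH)$, and apply Exel--Loring to $\tilde\pi$ to produce a net $(\tilde\pi_\lambda)$ of $*$-representations on $\fH$ with finite-dimensional essential subspaces satisfying $\tilde\pi_\lambda\to\tilde\pi$ in point-SOT. Setting $\pi_\lambda := \tilde\pi_\lambda|_\A$, the inclusion $\rC^*(\pi_\lambda(\A))\fH\subseteq\tilde\pi_\lambda(\rC^*_{\max}(\A))\fH$ forces finite-dimensionality; convergence $\pi_\lambda(a)\to\pi(a)$ in SOT is immediate, and $\pi_\lambda(a)^* = \tilde\pi_\lambda(a^*)\to\tilde\pi(a^*)=\pi(a)^*$ in SOT follows because each $\tilde\pi_\lambda$ is a $*$-representation. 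Hence $(\pi_\lambda)$ is the desired Exel--Loring approximation.

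For the reverse implication, I would start with an arbitrary $*$-representation $\tilde\pi$ of $\rC^*_{\max}(\A)$, set $\pi := \tilde\pi|_\A$, and invoke the hypothesis to get an Exel--Loring approximation $(\pi_\lambda)$ of $\pi$. Each $\pi_\lambda$ extends to a $*$-representation $\tilde\pi_\lambda$ of $\rC^*_{\max}(\A)$, whose essential subspace sits inside the finite-dimensional space $\rC^*(\pi_\lambda(\A))\fH$. The crux is to promote the hypothesis to point-SOT convergence $\tilde\pi_\lambda\to\tilde\pi$ on all of $\rC^*_{\max}(\A)$: convergence on $\A$ is given; convergence on $\A^*$ follows from the adjoint condition in the Exel--Loring approximation; these propagate to the unital $*$-subalgebra generated by $\A$ via linearity together with the standard fact that SOT convergence of uniformly bounded nets is preserved under products; and a density argument using uniform contractivity of $*$-representations extends convergence to all of $\rC^*_{\max}(\A)$. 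Exel--Loring then certifies that $\rC^*_{\max}(\A)$ is RFD.

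The main obstacle, as I expect it, is the reverse direction, specifically the upgrade from convergence on $\A$ to convergence on $\rC^*_{\max}(\A)$. This is the precise place where the adjoint condition built into the definition of an Exel--Loring approximation earns its keep: without it, the argument cannot reach elements of $\A^*$ and hence cannot generate the whole $\rC^*$-algebra, which is why the statement requires Exel--Loring rather than merely finite-dimensional approximations. Once the adjoint data is in hand, the passage is an essentially formal consequence of joint SOT-continuity of multiplication on norm-bounded sets together with the contractivity of $*$-representations.
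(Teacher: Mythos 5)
Your proof is correct and takes essentially the same route as the paper: the forward implication is the paper's Lemma \ref{L:extEL} specialized to $\fB=\rC^*_{\max}(\A)$, and the reverse implication rests on the same mechanism of extending the approximants $\pi_\lambda$ to $*$-representations of $\rC^*_{\max}(\A)$ via the universal property and using joint SOT-continuity of multiplication on bounded sets to reach linear combinations of words in $\upsilon(\A)\cup\upsilon(\A)^*$, which is where the adjoint half of the Exel--Loring approximation is used, exactly as in the paper. The only cosmetic difference is that the paper runs this for a single faithful representation (its condition (iii)) and concludes RFD by producing, for each nonzero element of that dense span, a finite-dimensional compression not vanishing on it, whereas you treat all $*$-representations and close by citing the full Exel--Loring characterization after a routine $\varepsilon/3$ density upgrade.
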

 
Motivated by these results, let us say that the operator algebra $\A$ has property $(\mathscr{F})$ if all of its representations admit a finite-dimensional approximation. The question of which operator algebras enjoy property $(\mathscr{F})$ is an interesting one, and Theorems \ref{T:mainA} and \ref{T:mainB} imply that
\[
 \text{$\rC^*_{\max}(\A)$ is RFD} \quad  \Rightarrow  \quad \text{$\A$ has property $(\mathscr{F})$}  \quad \Rightarrow \quad 
\text{$\A$ is RFD}.
\]
Equivalently, we are ultimately searching for situations when a non-self-adjoint version of the theorem of Exel and Loring holds with finite-dimensional approximations, or Exel-Loring approximations. It is important to mention that after the appearance of our paper on arXiv, Hartz was able to find a complete characterization of RFD operator algebras in terms of finite-dimensional approximations \cite[Theorem 1.2]{Har+}. More precisely, he showed that an operator algebra $\A$ is RFD if and only if for every representation $\pi : \A \rightarrow B(\mathfrak{H})$ there is a net $(\pi_{\lambda})$ of finite-dimensional representations of $\A$ on $\mathfrak{H}$ such that $(\pi_{\lambda}(a))$ converges to $\pi(a)$ in the WOT (rather than the SOT).

In trying to understand property $(\mathscr{F})$, a natural question emerges.

\begin{question}\label{Q:rfdC*max}
Let $\A$ be an RFD operator algebra. When is $\rC^*_{\max}(\A)$ RFD?
\end{question}

This question was already raised in \cite[Section 5]{CR2018rfd} with some partial results; see also \cite{thompson2022} for recent progress. Our results show that Question \ref{Q:rfdC*max} is equivalent to asking when is it the case that the \emph{existence} of a representation admitting a finite-dimensional approximation implies that \emph{all} representations admit an Exel--Loring approximation. Our approximation techniques and the resulting property $(\mathscr{F})$ thus unearth an intermediate state worthy of investigation.

\begin{question}\label{Q:propF}
Let $\A$ be an RFD operator algebra. When does $\A$ have property $(\mathscr{F})$? In other words, when do all representations of $\A$ admit a finite-dimensional approximation?
\end{question}

After this paper appeared on arXiv, Hartz was able to find an example of an operator algebra $\A$ such that $\rC^*_{\max}(\A)$ is not RFD while $\A$ is RFD \cite[Theorem 1.3]{Har+}. In fact, this  operator algebra $\A$ also fails to have property $(\mathscr{F})$. Thus, we see that there are representations of this RFD operator algebra $\A$ which do not even admit a finite-dimensional approximation, where pointwise convergence occurs in the SOT. This means that without restricting the class of representations under consideration (as is done in Theorem \ref{T:mainA} for instance), \cite[Theorem 1.2]{Har+} is sharp, and constitutes a genuine generalization of the Exel--Loring theorem to the non-self-adjoint context.  In light of this, our Theorem \ref{T:mainB} is particularly enlightening, as it offers an optimal characterization of residual finite-dimensionality of $\rC^*_{\max}(\A)$ in terms of \emph{all} representations of $\A$.

The rest of our paper is motivated by Question \ref{Q:rfdC*max}, in that we find concrete operator algebras whose maximal $\rC^*$-cover is RFD, and thus do not display the pathological behaviour of the example from \cite{Har+}.

While $\rC^*$-algebras are analogous to groups, general operator algebras are analogous to semigroups. Hence, non-self-adjoint operator algebras arising from semigroups form a natural class of examples for our study. Semigroup $\rC^*$-algebras have been studied by many authors over the years \cite{nica1992, LR1996, CL2002, CL2007, CEL2015, BLS2017, aHRT2018, aHNSY2021}, where a unified approach was obtained by Li for the class of \emph{independent} semigroups in \cite{li2012, li2013, li2017}. An operator algebra analogue of the reduced group $\rC^*$-algebra is $\A_r(P)$, which is the operator algebra generated by the left regular representation of $P$ on $\ell^2(P)$. 

Suppose $P$ is a cancellative semigroup (both left and right). For $p\in P$ we define its set of right divisors 
\[
\R_p=\{r\in P:p=qr \text{ for some } q\in P\}.
\]
 We say that $P$ has the \emph{finite divisor property} (or FDP) if $\R_p$ is finite for every $p\in P$. It is relatively straightforward to show that $\A_r(P)$ is RFD when $P$ has FDP (see Proposition \ref{P:ArP}). With the aim of answering Question \ref{Q:rfdC*max} in concrete examples, we introduce in Section \ref{S:coactions} a notion of an RFD semigroup coaction of $P$ on an operator algebra $\A$, and prove the following (see Theorem \ref{T:coactionC*maxRFD}).

\begin{theorem} \label{T:mainE}
Let $P$ a countable cancellative semigroup with FDP such that there is a character $\chi:\A_r(P)\to \bC$ with $\chi(\lambda_p)=1$ for every $p\in P$. Let $\A$ be a separable operator algebra that admits an RFD coaction of $P$. Then, $\rC^*_{\max}(\A)$ is RFD.
\end{theorem}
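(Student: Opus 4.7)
The plan is to invoke Theorem \ref{T:mainB}, reducing matters to showing that every representation $\pi:\A\to B(\fH)$ of $\A$ admits an Exel--Loring approximation. Fix such a $\pi$ and let $\delta:\A\to \A\otimes \A_r(P)$ denote the given RFD coaction, for which the character $\chi$ plays the role of a counit, so that $(\id_\A\otimes \chi)\circ \delta=\id_\A$ and hence $\pi=(\pi\otimes \chi)\circ \delta$. The strategy is to approximate $\chi$ from the $\A_r(P)$ side using FDP, and then transport these approximations back to $\A$ through $\delta$.

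The finite-dimensional approximations on $\A_r(P)$ proceed as follows. Under FDP, each $\ell^2(\R_p)$ is finite-dimensional, so one may exhaust $P$ by hereditary finite subsets $F\subset P$ (those with $\R_p\subset F$ for every $p\in F$). Let $E_F$ be the projection onto $\ell^2(F)$; the compression $\sigma_F:=E_F\lambda(\cdot)E_F$ is then a completely contractive map into a finite-dimensional $\ca$-algebra, eventually multiplicative on any fixed finite set of generators $\lambda_p$, with $\sigma_F(\lambda_p)\to \lambda_p$ and $\sigma_F(\lambda_p)^*\to \lambda_p^*$ in SOT as $F$ exhausts $P$. Choosing unit vectors $\xi_F\in \ell^2(F)$ so that $\omega_{\xi_F}\circ \sigma_F\to \chi$ pointwise on generators yields finite-dimensional vector states approximating the character $\chi$.

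Combining these on the $\A$ side, form the completely contractive map
\[
\pi_F:=(\pi\otimes \sigma_F)\circ \delta:\A \to B(\fH)\otimes B(\ell^2(F)).
\]
Thanks to the multiplicativity of $\sigma_F$ and the algebra structure of $\delta$, this is eventually a representation on any prescribed finitely generated subalgebra of $\A$. To produce genuine finite-dimensional representations of $\A$, I would restrict $\pi_F$ to a finite-dimensional reducing subspace of $\fH\otimes \ell^2(F)$ generated from a judiciously chosen cyclic vector (via the counit identity) and take the resulting net of finite-dimensional representations. Convergence to $\pi$ in SOT, and in adjoint-SOT, will then follow from the corresponding convergence of $\omega_{\xi_F}\circ \sigma_F$ to $\chi$. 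Separability of $\A$ and countability of $P$ allow passing from this net to a sequential approximation.

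The main obstacle is the simultaneous verification that the constructed approximating maps are \emph{representations} rather than merely completely contractive maps, and that they converge in adjoint-SOT---the defining feature of Exel--Loring approximations, and a delicate requirement in the non-selfadjoint setting. Both rely on the precise structure of an RFD coaction: $\delta$ must admit a rich decomposition into spectral subspaces indexed by $P$, and $\chi$ must propagate adjoint-SOT convergence from $\A_r(P)$ to $\A$ through these spectral subspaces, so that multiplicativity and adjoint compatibility are preserved in the limit.
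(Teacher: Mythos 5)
Your reduction to Theorem \ref{T:mainB} matches the paper's first step, and your compressions $\sigma_F$ to coinvariant finite-dimensional subspaces of $\ell^2(P)$ essentially reproduce Proposition \ref{P:ArP}. But the core of your construction has a genuine gap: the maps $\pi_F=(\pi\otimes\sigma_F)\circ\delta$ act on $\fH\otimes\ell^2(F)$, where $\pi$ is an arbitrary representation on a typically infinite-dimensional $\fH$; on a spectral element $a\in\A_p^\delta$ one has $\pi_F(a)=\pi(a)\otimes\sigma_F(\lambda_p)$, so $\rC^*(\pi_F(\A))(\fH\otimes\ell^2(F))$ is just as infinite-dimensional as $\pi$ itself, and the reducing subspace generated by a vector $\eta\otimes\xi_F$ has no reason to be finite-dimensional. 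Thus "restrict to a finite-dimensional reducing subspace" is not available: nothing in your outline creates finite dimensionality on the $\A$ side. Relatedly, you never actually use the hypothesis that the coaction is RFD, i.e.\ that $\rC^*_{\max}(\Q_F)$ is RFD for the quotients $\Q_F$ of $\A$ by the ideals generated by the spectral subspaces killed by $\pi_F$; this hypothesis carries all the weight (for the trivial semigroup it literally asserts that $\rC^*_{\max}(\A)$ is RFD), so an argument that leaves it idle cannot succeed.

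This is exactly where the paper's proof goes differently. One checks that $(\pi\otimes\pi_F)\circ\delta$ annihilates $\sum_{p\in\lambda^{-1}(\ker\pi_F)}\A_p^\delta$ and hence factors through $\Q_F$; Theorem \ref{T:ELC*max} applied to the RFD algebra $\rC^*_{\max}(\Q_F)$ supplies Exel--Loring approximations of these factored representations, and they are assembled (Proposition \ref{P:ELapproxcoactions}) into an Exel--Loring approximation of $(\pi\otimes\id)\circ\delta$ --- of $(\pi\otimes\id)\circ\delta$, not of $\pi$. The second point your outline underestimates is the return to $\pi$: the counit-type identity $(\pi\otimes\chi)\circ\delta\cong\pi$ does not transport SOT approximations, since slicing by $\chi$ (or by vector states approximating it) destroys multiplicativity and is not spatially compatible with the approximants. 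In the paper the character is used for something else: extending $\delta$ to $\widehat\delta:\rC^*_{\max}(\A)\to\rC^*_{\max}(\A)\otimes_{\min}\rC^*_r(P)$, one shows via $\chi$ and the multiplicative-domain argument that $(\rho\otimes\widehat\chi)\circ\widehat\delta\cong\rho$ for an injective $*$-representation $\rho$ of $\rC^*_{\max}(\A)$, so that $\widehat\delta$ and $(\rho\otimes\id)\circ\widehat\delta$ are injective; the Voiculescu-type Theorem \ref{T:Voiculescu} (this is where separability of $\A$ and countability of $P$ enter) then transfers the Exel--Loring approximation from $(\rho\otimes\id)\circ\widehat\delta|_{\upsilon(\A)}$ to $\rho\circ\upsilon$, which is what Theorem \ref{T:ELC*max} requires. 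Your proposal needs both missing mechanisms --- exploiting RFD-ness of the $\Q_F$ to manufacture finite dimensionality, and a Voiculescu-type transfer rather than the counit identity to get back to $\pi$ --- before it can be completed.
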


Coactions of groups on general operator algebras were introduced in \cite{DKKLL+} for the purpose of showing the existence of $\rC^*$-algebras satisfying co-universality with respect to representations of product systems over right LCM monoids. In the $\rC^*$-algebra literature, coactions by a discrete group $G$ are interpreted as actions of the quantum dual group of $G$ on the $\rC^*$-algebra, and are useful in many instances \cite{Quigg1996, Exe1997, CLSV2011, Seh19}. Since our semigroups are assumed to be FDP, they will not contain infinite groups as subsemigroups, and hence our development of coactions by semigroups differs from the one for group coactions in \cite{DKKLL+}.

The class of independent semigroups includes the majority of examples of semigroups previously studied through their $\rC^*$-algebras in the literature. In Section \ref{SS:semigroup} we use Theorem \ref{T:mainE} to show that the maximal C*-cover is RFD for semigroup algebras of many independent semigroups, as well as for operator algebras of functions with circular symmetry (see Theorem \ref{T:main-semigroup} and Theorem \ref{T:coactionAsymm} respectively). 

\begin{theorem}\label{T:mainF}
Let $P$ and $Q$ be countable semigroups embedded in some groups $G$ and $H$ respectively. Assume that $P$ and $Q$ are independent, and that $Q$ is left-amenable and has FDP. Assume further that there is a homomorphism $\varphi : G \rightarrow H$ such that $\varphi(P) \subset Q$ and $\varphi|_{P}$ is finite-to-one. Then, there is an RFD coaction of $Q$ on $\A_r(P)$ and $\rC^*_{\max}(\A_r(P))$ is RFD.
\end{theorem}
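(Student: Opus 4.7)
The plan is to verify the hypotheses of Theorem~\ref{T:mainE} with $Q$ playing the role of the acting semigroup and $\A = \A_r(P)$. Since $\A_r(P)$ is separable (as $P$ is countable) and $Q$ has FDP by assumption, the two tasks are: (a) exhibit a character $\chi : \A_r(Q) \to \bC$ sending every $\lambda_q$ to $1$, and (b) construct an RFD coaction of $Q$ on $\A_r(P)$. Establishing (b) also provides the first conclusion of the theorem, and then Theorem~\ref{T:mainE} immediately delivers the second.

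For (a), I would use left-amenability of $Q$ in the standard way. Choose a left Følner net $(F_\alpha)$ of finite subsets of $Q$ and set $\xi_\alpha = |F_\alpha|^{-1/2}\mathbf{1}_{F_\alpha} \in \ell^2(Q)$. Since left multiplication by $q$ is injective on $Q$, the Følner condition yields
\[
\langle \lambda_q \xi_\alpha, \xi_\alpha \rangle = |qF_\alpha \cap F_\alpha|/|F_\alpha| \longrightarrow 1 \qquad \text{for every } q \in Q.
\]
A weak-$*$ cluster point $\omega$ of the vector states $\langle \,\cdot\, \xi_\alpha, \xi_\alpha\rangle$ on $\A_r(Q)$ satisfies $\omega(\lambda_q) = 1$ for all $q$; since $\{\lambda_q : q \in Q\}$ is a multiplicative subsemigroup generating $\A_r(Q)$, $\omega$ is multiplicative on the generators and extends by continuity to the desired character.

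For (b), the map $g \mapsto (g, \varphi(g))$ is an injective group homomorphism $G \to G \times H$, inducing a $*$-homomorphism between reduced group $\rC^*$-algebras that sends $\lambda_g \mapsto \lambda_g \otimes \lambda_{\varphi(g)}$. After passage to the appropriate corners associated with the projections $\ell^2(G) \to \ell^2(P)$ and $\ell^2(H) \to \ell^2(Q)$, where independence of the two semigroups is invoked to ensure compatibility with the non-selfadjoint operator algebra structure, one obtains a completely contractive map
\[
\delta : \A_r(P) \longrightarrow \A_r(P) \otimes_{\min} \A_r(Q), \qquad \delta(\lambda_p) = \lambda_p \otimes \lambda_{\varphi(p)}.
\]
Coassociativity holds trivially on generators (both sides of the coaction identity equal $\lambda_p \otimes \lambda_{\varphi(p)} \otimes \lambda_{\varphi(p)}$), so $\delta$ is a coaction of $Q$ on $\A_r(P)$ in the sense of Section~\ref{S:coactions}. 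The isotypic components of $\delta$ are the closed spans $\overline{\spn}\{\lambda_p : p \in P,\ \varphi(p) = q\}$ for $q \in \varphi(P)$, and the finite-to-one hypothesis on $\varphi|_P$ forces each of them to be finite-dimensional, which is precisely the RFD condition on the coaction.

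The main obstacle I anticipate is not the isotypic bookkeeping but the careful passage from the group $\rC^*$-algebra level, where the map $\lambda_g \mapsto \lambda_g \otimes \lambda_{\varphi(g)}$ is manifestly a completely contractive $*$-homomorphism, down to the non-selfadjoint algebras $\A_r(P)$ and $\A_r(Q)$. The independence of $P$ and $Q$ is the key structural input that realizes $\A_r(P)$ and $\A_r(Q)$ as the correct corners of $\rC^*_r(G)$ and $\rC^*_r(H)$ respectively, so that the $*$-homomorphism on the enveloping group $\rC^*$-algebras restricts to a well-defined completely contractive coaction between operator algebras; verifying this restriction while matching the precise definition of coaction from Section~\ref{S:coactions} is where the technical heavy lifting will be.
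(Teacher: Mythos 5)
Your overall strategy (build a coaction of $Q$ on $\A_r(P)$ with $\delta(\lambda^P_p)=\lambda^P_p\otimes\lambda^Q_{\varphi(p)}$, check it is RFD, produce a character on $\A_r(Q)$, and feed everything into Theorem~\ref{T:mainE}) is exactly the paper's strategy, and your F\o lner construction of the character is a legitimate alternative to the paper's citation of Li's result (modulo quoting the nontrivial fact that left-amenable cancellative semigroups satisfy the F\o lner condition). The genuine gap is in the construction of $\delta$, which is the technical heart of the theorem. Your route is: take the injective homomorphism $g\mapsto(g,\varphi(g))$, get a $*$-homomorphism $\rC^*_r(G)\to\rC^*_r(G)\otimes_{\min}\rC^*_r(H)$, and then ``pass to corners'' over $\ell^2(P)$ and $\ell^2(Q)$. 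This does not produce a map defined on $\A_r(P)$: compression of $\rC^*_r(G)$ to $\ell^2(P)$ is not multiplicative, and what one would actually need is a completely contractive homomorphism $\A_r(P)\to\rC^*_r(G)$ sending $\lambda^P_p\mapsto\lambda^G_p$, which fails in general. Concretely, take $P=\bF_2^+\subset G=\bF_2$, $Q=\bN\subset H=\bZ$, $\varphi$ the length homomorphism --- all hypotheses of the theorem hold ($\bF_2^+$ is right LCM, hence independent) --- yet $\|\lambda^P_{g_1}+\lambda^P_{g_2}\|=\sqrt2$ while $\|\lambda^G_{g_1}+\lambda^G_{g_2}\|=2$, so no such contractive map exists and independence cannot repair this; independence is not about realizing $\A_r(P)$ as a corner of $\rC^*_r(G)$ at all.

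What the paper does instead is stay entirely at the level of semigroup $\rC^*$-algebras: independence of $P$ enters through Li's conditional-expectation criterion for $\ker\lambda^P_*$ on the \emph{full} semigroup algebra $\rC^*_s(P)$, which yields a $*$-homomorphism $\rC^*_r(P)\to\rC^*_r(P\times Q)\cong\rC^*_r(P)\otimes_{\min}\rC^*_r(Q)$ with $\lambda^P_p\mapsto\lambda^P_p\otimes\lambda^Q_{\varphi(p)}$ (Propositions~\ref{p:injectivity} and~\ref{P:tensor}); the complete \emph{isometry} of its restriction to $\A_r(P)$ --- which Definition~\ref{d:coaction} requires and which your proposal never addresses, claiming only complete contractivity --- is then obtained from the semigroup Fell absorption principle (Lemma~\ref{l:sfap}). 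Two smaller imprecisions: identifying the spectral subspaces as $\ol{\spn}\{\lambda^P_p:\varphi(p)=q\}$ needs an argument (the paper's Lemma~\ref{L:spectralsub}); and ``each spectral subspace is finite-dimensional'' is not the definition of an RFD coaction --- one must show each quotient $\Q_F$ is finite-dimensional, which uses FDP of $Q$ (via the description of $\ker\pi_F$ in Proposition~\ref{P:ArP}) together with finite-to-oneness of $\varphi|_P$, and then cite that finite-dimensional operator algebras have RFD maximal $\rC^*$-covers.
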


This theorem allows us to show that for many concrete examples of independent semigroups, the operator algebra $\A_r(P)$ has an RFD maximal $\rC^*$-cover. For instance, this works for $\mathbb{N}^d$ for any integer $d$, left-angled Artin monoids (Example \ref{ex:laam}) and Braid monoids (Example \ref{ex:braid}). 

\begin{theorem}\label{T:mainC}
Let $\Omega\subset\bC^n$ be a balanced subset and let $\A$ be a $\bT$-symmetric operator algebra of functions on $\Omega$. Assume that there is a collection of homogeneous polynomials spanning a dense subset of $\A$. Then, $\A$ admits an RFD coaction by $\bN$. In particular, $\rC^*_{\max}(\A)$ is RFD.
\end{theorem}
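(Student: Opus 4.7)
The plan is to exhibit an RFD coaction of $\bN$ on $\A$ and then invoke Theorem \ref{T:mainE}. The semigroup $\bN$ is countable, cancellative, and has FDP (each $n$ has only finitely many right divisors). Moreover, $\A_r(\bN)$ is canonically isomorphic to the disk algebra $A(\bD)$ via $\lambda_k \mapsto z^k$, and evaluation at $1$ gives a character $\chi : \A_r(\bN) \to \bC$ with $\chi(\lambda_k) = 1$ for every $k$. Hence once the coaction is produced, Theorem \ref{T:mainE} will immediately yield that $\rC^*_{\max}(\A)$ is RFD.

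To construct the coaction, I would use the $\bT$-symmetry of $\A$ directly. Since $\Omega$ is balanced, the formula $\alpha_t(f)(z) = f(tz)$ defines a strongly continuous action $\alpha : \bT \to \Aut(\A)$ by completely isometric automorphisms, with $\alpha_t(p) = t^k p$ for any homogeneous polynomial $p \in \A$ of degree $k$. Define
$$
\delta(f) = \bigl[\, t \mapsto \alpha_t(f)\,\bigr] \in C(\bT; \A) \cong \A \otimes C(\bT).
$$
This is a completely isometric unital homomorphism since each $\alpha_t$ is completely isometric. On a homogeneous $p$ of degree $k$, $\delta(p) = p \otimes \lambda_k$ lies in the closed subalgebra $\A \otimes A(\bD) \subset \A \otimes C(\bT)$, so the density of homogeneous polynomials combined with the continuity of $\delta$ forces $\delta(\A) \subset \A \otimes A(\bD)$. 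Coassociativity holds because $(\delta \otimes \id)\delta(p) = p \otimes \lambda_k \otimes \lambda_k = (\id \otimes \Delta)\delta(p)$, where $\Delta$ sends $\lambda_k$ to $\lambda_k \otimes \lambda_k$, and $(\id \otimes \chi)\delta(p) = p$ gives the counit axiom; both identities extend by density.

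To verify that $\delta$ is an RFD coaction, I would check that each isotypic summand $\A_k = \{f \in \A : \alpha_t(f) = t^k f \text{ for all } t \in \bT\}$ is finite-dimensional. The maps $P_k(f) = \int_{\bT} t^{-k} \alpha_t(f)\, dt$ are completely contractive projections onto $\A_k$, and applying $P_k$ to an approximating sequence of homogeneous polynomials identifies $\A_k$ with a subspace of the $\binom{n+k-1}{k}$-dimensional space of degree-$k$ homogeneous polynomials in $n$ variables. The Fej\'er--Ces\`aro combinations $F_N = \sum_{k=0}^{N} \bigl(1 - \tfrac{k}{N+1}\bigr) P_k$ then give completely contractive finite-rank endomorphisms of $\A$ converging pointwise in norm to the identity, which is precisely the finite-dimensional slicing needed for the RFD coaction condition. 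The main obstacle is formal rather than substantive: aligning the concrete isotypic data above with the exact definition of RFD coaction from Section \ref{S:coactions}. Once this matching is made, Theorem \ref{T:mainE} delivers the conclusion.
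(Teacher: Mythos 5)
Your construction of the coaction is essentially the paper's own (compare Lemma \ref{L:autcont} and the first half of the proof of Theorem \ref{T:coactionAsymm}): one uses the circle action $\alpha_\zeta(f)=f_\zeta$, views $f\mapsto[\zeta\mapsto\alpha_\zeta(f)]$ as a completely isometric map into $\rC(\bT;\A)\cong\A\otimes_{\min}\rC(\bT)$, observes it lands in $\A\otimes_{\min}\rA(\bD)\cong\A\otimes_{\min}\A_r(\bN)$, and notes that the spectral subspace $\A_k$ is the space of degree-$k$ homogeneous elements, hence finite-dimensional; the character and FDP hypotheses of Theorem \ref{T:mainE} for $\bN$ are as you say. (Two small points: the norm continuity of $\zeta\mapsto\alpha_\zeta(f)$ is not automatic from $\bT$-symmetry alone --- it is exactly where the density of homogeneous polynomials is used, as in Lemma \ref{L:autcont}; and the coassociativity/counit checks are not required by Definition \ref{d:coaction}.)

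The genuine gap is in your verification that the coaction is \emph{RFD}. By definition, this means that for every finite $F\subset\bN$ the maximal $\rC^*$-cover of the quotient $\Q_F$ of $\A$ by the closed ideal generated by $\sum_{k\in\lambda^{-1}(\ker\pi_F)}\A_k$ is RFD, where $(\pi_F)$ is the approximation from Proposition \ref{P:ArP}. Your Fej\'er--Ces\`aro finite-rank approximations of the identity on $\A$ are not this condition, and the ``alignment'' you defer as formal is where the remaining substance lies. What is actually needed is: (a) by Proposition \ref{P:ArP}, the complement of $\lambda^{-1}(\ker\pi_F)$ in $\bN$ is finite (this is where FDP enters), so $\Q_F$ is spanned by the images of the finitely many finite-dimensional spaces $\A_k$ with $k\le\max F$ and is therefore finite-dimensional; and (b) the maximal $\rC^*$-cover of a \emph{finite-dimensional} operator algebra is RFD, which is a nontrivial theorem (\cite{CR2018rfd}, Theorem 5.1, as invoked in the paper), not a formality --- there is no general principle that good finite-dimensional structure of $\A$ passes to $\rC^*_{\max}$; indeed that is precisely the content of Question \ref{Q:rfdC*max}. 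Your identification of each $\A_k$ with a space of homogeneous polynomials (via the averaging projections $P_k$) is correct and is the key structural input, but without steps (a) and (b) the RFD-coaction hypothesis of Theorem \ref{T:mainE} has not been verified, so the proof does not yet close.
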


This theorem is applied to some uniform algebras (Corollary \ref{C:AOmega}), to the famous Schur--Agler class of functions (Corollary \ref{C:SAclass}) and to some algebras of multipliers on well-behaved reproducing kernel Hilbert spaces (Corollary \ref{C:RKHS}). Taking into account Theorem \ref{T:mainB}, these results shed considerable light on the structure of representations of operator algebras that play an important role in multivariate operator theory; see \cite{CD2016abscont, CH2018, BHM2018, CT2021} and the references therein. 

In particular, it follows from Corollary \ref{C:AOmega} that the maximal $\rC^*$-cover of the polydisc algebra $\rA(\bD^n)$ is RFD. This result is particularly striking in view of the following. First, it is a consequence of Ando's theorem \cite[Theorem 5.5]{paulsen2002} that $\rC^*_{\max}(\rA(\bD^2))$ is the universal $\rC^*$-algebra generated by a pair of commuting contractions. Hence, this universal C*-algebra is RFD. Second, we know that $\rA(\bD^2)$ is a uniform algebra, and hence it has a commutative $\rC^*$-envelope. Thus, $\rA(\bD^2)$ is an RFD operator algebra with RFD minimal and maximal $\rC^*$-covers.

Nevertheless, there exists an intermediate $\rC^*$-cover which is \emph{not} RFD (such a phenomenon had already been observed in \cite[Example 6]{CR2018rfd}). Indeed, recall that two commuting contractions $T_1,T_2$ are said to be \emph{doubly commuting} if $T_1^*T_2=T_2T_1^*$. Then, it follows from \cite[Theorem 6.11]{CShe2020} and the refutation of Connes embedding conjecture \cite{JNVWY2020} that the universal $\rC^*$-algebra $\fD$ generated by a pair of \emph{doubly} commuting contractions is not RFD. Since the standard representation of $\rA(\bD^2)$ on the Hardy space of the bidisc is generated by a pair doubly commuting isometries, Ando's inequality implies that $\fD$ is a $\rC^*$-cover of $\rA(\bD^2)$.

%%%%%%%%%%%%%%%%%%%%%%%%%%%%%%%%%%%%%%%%%%%%%%%%%%%%%%%%%%%%%%%%%%%%%%%%%%%%%%%%%%%%%%%%%%%%%%%%%%%%%%%%%%%%%%%%%%%%%%%%%%%%%%%%%%%%%%%%%%%%%%%%%%%%%%%%%%%%%%%%%%%%%%%%%%%%%%%%%%%%%%%%%%%%%%%%%%%%%%%%%%%%%%%%%%%%%%%%%%%%%%%%%%%%%%%%%%%%%%%%%%%%%%%%%%%%
%%%%%%%%%%%%%%%%%%%%%%%%%%%%%%%%%%%%%%%%%%%%%%%%%%%%%%%%%%%%%%%%%%%%%%%%%%%%%%%%%%%%%%%%%%%%%%%%%%%%%%%%%%%%%%%%%%%%%%%%%%%%%%%%%%%%%%%%%%%%%%%%%%%%%%%%%%%%%%%%%%%%%%%%%%%%%%%%%%%%%%%%%%%%%%%%%%%%

\section{Finite-dimensional approximations}\label{S:fdimapprox}

In this section, we examine various notions of finite-dimensional approximations for representations of operator algebras. Before proceeding however, we need to recall some operator algebraic background.

\subsection{$\ca$-covers and extremal representations}\label{SS:C*cov}

Let $\A$ be an operator algebra. If $\fB$ is a $\rC^*$-algebra and $\iota:\A\to \fB$ is a completely isometric homomorphism with $\fB=\rC^*(\iota(\A))$, then we say that the pair $(\fB,\iota)$ is a \emph{$\rC^*$-cover} of $\A$. Throughout the paper, we will be dealing with two special $\rC^*$-covers, which we introduce next.

The \emph{maximal} $\rC^*$-cover of $\A$, denoted by $(\rC^*_{\max}(\A),\upsilon)$, is the essentially unique $\rC^*$-cover satisfying the following universal property: for any $\rC^*$-algebra $\fC$ and any representation $\pi:\A\to \fC$, there is a $*$-representation $\widehat\pi:\rC^*_{\max}(\A)\to \fC$ such that $\widehat\pi\circ\upsilon=\pi$. (By a \emph{representation} of an operator algebra, here and elsewhere we mean a completely contractive homomorphism.) For a proof of existence of the maximal $\rC^*$-cover, the reader should consult \cite[Proposition 2.4.2]{BLM2004}. 

The other $\rC^*$-cover that is relevant for our purposes is the ``minimal" one. It is usually referred to as the \emph{$\rC^*$-envelope of $\A$}, denoted by $(\rC^*_e(\A),\eps)$. Its defining universal property is the following: for any $\rC^*$-cover $(\iota,\fB)$ of $\A$, there is a $*$-representation $\pi:\fB\to \rC^*_e(\A)$ such that $\pi\circ \iota=\eps$. The existence of this object is highly non-trivial, see \cite[Theorem 4.3.1 and Proposition 4.3.5]{BLM2004}. 

Due to important work of Dritschel--McCullough \cite{dritschel2005} (inspired by previous insight of Muhly--Solel \cite{MS1998}), it is known that certain special representations of $\A$ are intimately related to its $\rC^*$-envelope. We recall these important concepts. 

Let $\pi:\A\to B(\fH_\pi)$ and $\theta:\A\to B(\fH_\theta)$ be representations. We say that $\theta$ is a \emph{dilation} of $\pi$ if $\fH_\pi\subset \fH_\theta$ and
\[
\pi(a)=P_{\fH_\pi}\theta(a)|_{\fH_\pi}, \quad a\in \A. 
\]
If in addition the space $\fH_\pi$ is invariant for $\theta(\A)$, then the dilation $\theta$ is said to be an \emph{extension} of $\pi$. Similarly, if $\fH_\pi$ is invariant for $\theta(\A)^*$, then the dilation $\theta$ is said to be a \emph{coextension} of $\pi$.

 If $\theta$ is a dilation of $\pi$, then it is said to be a \emph{trivial dilation} if the space $\fH_\pi$ is reducing for $\theta(\A)$. In other words, a trivial dilation $\theta$ of $\pi$ can be written as
\[
\theta(a)=\pi(a)\oplus \left(\theta(a)|_{\fH_\pi^\perp}\right), \quad a\in \A.
\]
A representation is said to be \emph{extension-extremal} if all its extensions are trivial. Likewise, it is said to be \emph{coextension-extremal} if all its coextensions are trivial. Moreover, a representation is simply said to be \emph{extremal} if all its dilations are trivial. Clearly, an extremal representation is necessarily extension-extremal and coextension-extremal. The reader should consult \cite{DK11} for a thorough study of these notions.
%The next fact is well known.
%
%\begin{lemma}\label{L:ext0}
%Let $\A$ be a unital operator algebra and let $\pi:\A\to B(\fH_\pi)$ be a unital  extremal representation. Let $\fH$ be a Hilbert space containing $\fH_\pi$ and define a new representation $\sigma:\A\to B(\fH)$ as
%\[
%\sigma(a)=\pi(a)\oplus 0, \quad a\in \A.
%\]
%Then, $\sigma$ is extremal.
%\end{lemma}
%\begin{proof}

%
%Let $\theta:\A\to B(\fH_\theta)$ be a representation which is a dilation of $\sigma$. In particular, $\theta$ is a dilation of $\pi$ so that $\fH_\pi$ is reducing for $\theta(\A)$ as $\pi$ is extremal. It remains to show that the space $\fK=\fH\ominus \fH_\pi$ is also reducing for $\theta(\A)$. To see this, put $\fH_\theta'=\ol{\rC^*(\theta(\A))\fH}$. This is a reducing subspace for $\theta(\A)$, so we may define a representation $\theta':\A\to B(\fH_\theta')$ with the property that
%\[
%\theta(a)=\theta'(a)\oplus 0, \quad a\in \A.
%\]
%Thus, it suffices to check that $P_{\fH_\theta'}\fK$ is reducing for $\theta'(\A)$.  Note now that $\fH_\theta'\subset\theta(I)\fH$ whence 
%\[
%P_{\fK}\fH_\theta'\subset P_{\fK}\theta(I)\fH=P_{\fK} \sigma(I)\fH=\{0\}
%\]
%which implies that $P_{\fH_\theta'}\fK=\{0\}$, which is certainly reducing for $\theta'(\A)$.
%\end{proof}

We now collect some minor variations on known facts that we require later. We provide a detailed argument, as some care must be taken when dealing with the unit (or lack thereof).

\begin{theorem}\label{T:extreps}
Let $(\fB,\iota)$ be a $\rC^*$-cover of some operator algebra $\A$. Let $\pi:\A\to B(\fH_\pi)$ be a representation. The following statements hold.
\begin{enumerate}[{\rm (i)}]

\item If $\pi$ is extremal, then there is a $*$-representation $\sigma:\fB\to B(\fH_\pi)$ such that $\pi=\sigma\circ \iota$.

\item If $\pi$ is extension-extremal, then there is an extremal representation $\sigma$ of $\A$ which is a coextension of $\pi$.

\item If $\pi$ is coextension-extremal, then there is an extremal representation $\sigma$ of $\A$ which is an extension of $\pi$.

%
%\item If $\psi$ is coextension-extreme, then there is a $*$-representation $\pi:\rC^*(\A)\to B(\fH_\pi)$ with $\fH_{\varphi} \subset \fH_\pi$ such that $\phi(a)=\pi(a)|_{\fH_\phi}$ for $a\in \A$ and $\pi|_{\A}$ dilation extreme.
\end{enumerate}
\end{theorem}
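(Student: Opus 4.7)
The plan is to reduce each part to a block-matrix computation based on the existence of an extremal dilation of any representation, as established by Dritschel--McCullough. In each case the multiplicative bookkeeping inside the chosen extremal dilation is the main challenge.

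For (i), the approach is the classical one of Arveson--Stinespring. Extend the completely contractive map $\pi \circ \iota^{-1} : \iota(\A) \to B(\fH_\pi)$, via Arveson's extension theorem applied to the unital operator system $\iota(\A) + \iota(\A)^* + \bC 1$ inside the unitization of $\fB$, to a completely positive map $\Phi : \fB \to B(\fH_\pi)$. Form a minimal Stinespring dilation $(\rho, V, \fK)$ with $\rho : \fB \to B(\fK)$ a $*$-representation and $V : \fH_\pi \to \fK$ an isometry satisfying $\Phi(b) = V^* \rho(b) V$. Then $\rho \circ \iota$ dilates $\pi$ through $V$, and extremality of $\pi$ forces this dilation to be trivial, so $V\fH_\pi$ is reducing for $\rho(\iota(\A))$ and therefore for $\rho(\fB) = \rC^*(\rho(\iota(\A)))$. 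Setting $\sigma(b) := V^* \rho(b) V$ yields the desired $*$-representation.

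For (ii) and (iii), start with any extremal dilation $\tilde\sigma : \A \to B(\tilde\fH)$ of $\pi$ and write
\[
\tilde\sigma(a) = \begin{pmatrix} \pi(a) & B(a) \\ C(a) & D(a) \end{pmatrix}
\]
with respect to $\fH_\pi \oplus \fH_\pi^\perp$. Multiplicativity of $\pi$ combined with the $(1,1)$ entry of $\tilde\sigma(ab) = \tilde\sigma(a) \tilde\sigma(b)$ yields $B(a) C(b) = 0$ for all $a, b \in \A$. Let $\fK := \fH_\pi + \overline{\tilde\sigma(\A) \fH_\pi}$ be the smallest $\tilde\sigma(\A)$-invariant subspace containing $\fH_\pi$, so that $\fK = \fH_\pi \oplus \fK_C$ with $\fK_C := \overline{C(\A) \fH_\pi}$. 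In the refined decomposition $\tilde\fH = \fH_\pi \oplus \fK_C \oplus (\tilde\fH \ominus \fK)$, the identity $BC = 0$ forces $B$ to annihilate $\fK_C$, while invariance of $\fK$ kills the lower-left pair of blocks, producing
\[
\tilde\sigma(a) = \begin{pmatrix} \pi(a) & 0 & B_1(a) \\ C(a) & D_{11}(a) & D_{12}(a) \\ 0 & 0 & D_{22}(a) \end{pmatrix}.
\]

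For (ii), the compression of $\tilde\sigma$ to $\fH_\pi \oplus (\tilde\fH \ominus \fK)$ equals $\begin{pmatrix} \pi(a) & B_1(a) \\ 0 & D_{22}(a) \end{pmatrix}$; multiplicativity of this compression follows from the $(1,3)$ and $(3,3)$ entries of the $3 \times 3$ product, and it is manifestly an extension of $\pi$. Extension-extremality of $\pi$ forces $B_1 \equiv 0$, whence $B \equiv 0$, and $\tilde\sigma$ itself is a coextension of $\pi$ which is extremal by construction. For (iii), the restriction of $\tilde\sigma$ to the invariant subspace $\fK$ is the representation $\sigma_0(a) = \begin{pmatrix} \pi(a) & 0 \\ C(a) & D_{11}(a) \end{pmatrix}$, a coextension of $\pi$. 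Coextension-extremality of $\pi$ forces $C \equiv 0$, so $\fK_C = 0$, $\fK = \fH_\pi$, and $\fH_\pi$ is $\tilde\sigma(\A)$-invariant; hence $\tilde\sigma$ is an (extremal) extension of $\pi$. The chief obstacle is disentangling the three-fold block structure and extracting the identity $BC = 0$; once these are in place, the inheritance of extremality from $\pi$ to $\tilde\sigma$ is automatic.
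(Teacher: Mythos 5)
Your argument is correct, and while part (i) follows essentially the paper's route, parts (ii) and (iii) are proved by a genuinely different method. For (i) the paper runs the same Arveson--Stinespring compression argument; its additional content is the explicit treatment of units, invoking Meyer's unitization theorem \cite{meyer2001} when $\A$ (or $\pi$) fails to be unital so that the completely positive extension of $\pi\circ\iota^{-1}$ is legitimate --- a point you leave implicit when you pass to the operator system inside the unitization of $\fB$, and again when you invoke Dritschel--McCullough for an extremal dilation of a possibly non-unital representation in (ii)--(iii). For (ii) and (iii) the paper does not argue with block matrices at all: it compresses $\pi$ to its unital part, quotes \cite[Corollary 3.11]{DK11} to obtain an extremal coextension of an extension-extremal unital representation, and then pads by the zero representation, checking through a separate Schwarz-inequality argument (following \cite{arveson2011} and \cite{DS2018}) that the padded representation is still extremal. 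You instead take a single extremal dilation $\tilde\sigma$ of $\pi$ (which exists by \cite{dritschel2005}, after the same unitization care), extract the semi-invariance identity $B(a)C(b)=0$, refine to the decomposition $\fH_\pi\oplus\fK_C\oplus(\tilde\fH\ominus\fK)$, and show that extension-extremality forces $B\equiv 0$, so $\tilde\sigma$ is already a coextension, while coextension-extremality forces $C\equiv 0$, so $\fK=\fH_\pi$ and $\tilde\sigma$ is already an extension. Your block computations check out: $\fK_C$ is invariant, so the compression to its orthocomplement is multiplicative and yields a genuine extension of $\pi$, and the triviality you need is exactly what extension-/coextension-extremality supplies, while extremality of the final $\sigma=\tilde\sigma$ is built in. The trade-off is that your route is more self-contained --- it derives (ii) and (iii) simultaneously from the bare existence of extremal dilations, avoiding \cite[Corollary 3.11]{DK11} and the direct-sum-with-zero extremality argument --- whereas the paper leans on the cited dilation-theoretic results and makes the unital/non-unital bookkeeping explicit, which is the one technical point your write-up should still address.
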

\begin{proof}
(i) If $\A$ is unital, then necessarily $\iota$ and $\fB$ are both unital and the desired statement is an immediate consequence of Stinespring's dilation theorem \cite[Theorem 4.1]{paulsen2002} applied to the representation $\pi\circ \iota^{-1}$.

If $\A$ is not unital, then \cite[Section 3]{meyer2001} implies the existence of a unital representation $\pi^1$ of the unitization $\A^1$ of $\A$ (see \cite[Paragraph 2.1.11]{BLM2004}) that extends $\pi$. It is easy to see that $\pi^1$ is still extremal (see for instance \cite[Proposition 2.5]{DS2018}). Hence, it suffices to apply the previous argument to $\pi^1$.

(ii) Assume first that $\A$ is unital. Then, $\pi(I)$ is a self-adjoint projection with range $\fM$ commuting with $\pi(\A)$. Thus, the map
$\pi':\A\to B(\fM)$ defined as 
\[
\pi'(a)=\pi(a)|_{\fM}, \quad a\in \A
\]
is a unital representation. It is readily verified that $\pi'$ is extension-extremal since $\pi$ is assumed to be. Invoking \cite[Corollary 3.11]{DK11}, we see that there is an extremal unital representation $\sigma':\A\to B(\fH_\sigma)$ which is a coextension of $\pi'$. Define now a representation $\sigma:\A\to B(\fH_\pi)$ as
\[
\sigma(a)=\sigma'(a)\oplus 0, \quad a\in \A.
\]
Clearly, this is a coextension of $\pi$.  To see that $\sigma$ is extremal, we argue as follows. First, we note that the zero map on $\iota(\A)$ has a unique completely positive extension to $\fB$, namely the zero map itself. Invoking \cite[Proposition 2.4]{DS2018}, we conclude that $0$ is an extremal representation of $\A$. Once we know this, we may argue as in the proof of \cite[Proposition 4.4]{arveson2011} (using the version of the Schwarz inequality found in \cite[Proposition 1.5.7]{BO2008}) to see that $\sigma'$ being extremal implies that $\sigma$ is also extremal, by another application of \cite[Proposition 2.4]{DS2018}.

If $\A$ is not unital, then as above we may apply \cite[Section 3]{meyer2001} to find a unital representation $\pi^1$ of the unitization $\A^1$ of $\A$. A trivial modification of the argument in \cite[Proposition 2.5]{DS2018} shows that $\pi^1$ is still extension-extremal. An application of the previous argument to $\pi^1$ completes the proof.

(iii) This is completely analogous to (ii).
\end{proof}

%
%If $\A \subseteq B(\fH)$ is a non-unital operator algebra generating a C*-algebra $\B = C^*(\A)$, a theorem of Meyer \cite[Section 3]{meyer2001} states that every representation $\varphi: \A \rightarrow B(\K)$ extends to a unital representation $\varphi^1 : \A^1=\A \oplus \bC I_{\fH} \rightarrow B(\K)$ by setting $\varphi^1(a+\lambda I_{\fH}) = \varphi(a) + \lambda I_{\K}$. 
%
%Using Meyer's theorem, the representation theory of non-unital operator algebras can be reduced to that of unital operator algebras. For instance, the C*-envelope of a non-unital operator algebra can be computed from the C*-envelope of its unitization. More precisely, it is the pair $(C^*_e(\A), \iota)$ where $C^*_e(\A)$ is the C*-subalgebra generated by $\iota(\A)$ inside the C*-envelope cover $(C^*_e(\A^1), \iota)$ of the (unique) unitization $\A^1$ of $\A$.
% 

\subsection{Finite-dimensional approximations}\label{SS:fdimapprox}
 
Let $\A$ be an operator algebra and let $\pi:\A\to B(\fH)$ be a representation. Let
\[
\pi_\lambda:\A\to B(\fH), \quad \lambda\in \Lambda
\]
be a net of representations with the property that the space $\rC^*(\pi_\lambda(\A))\fH$ is finite-dimensional for every $\lambda\in \Lambda$. We say that the net $(\pi_\lambda)$ is a \emph{finite-dimensional approximation} for $\pi$ if $(\pi_\lambda(a))$ converges to $\pi(a)$ in the SOT for every $a\in \A$. If, instead,  we have that $(\pi_\lambda(a)^*)$ converges to $\pi(a)^*$ in the SOT for every $a\in \A$, then we say that $(\pi_\lambda)$ is a \emph{finite-dimensional $*$-approximation} for $\pi$. Finally, we say that $(\pi_\lambda)$ is an \emph{Exel--Loring approximation} for $\pi$ if it is both a finite-dimensional approximation and a finite-dimensional $*$-approximation. Our definitions above are directly inspired by work on Exel--Loring \cite[Theorem 2.4]{EL1992}, according to which all $*$-representations of an RFD $\rC^*$-algebra admit what we call Exel--Loring approximations.

The rest of this section is devoted to establishing basic facts about finite-dimen\-sional approximations that we require throughout the paper. First, we examine direct sums.

\begin{lemma}\label{L:directsum}
Let $\A$ be an operator algebra and let $\Omega$ be a set. For each $\omega\in\Omega$,  let $\pi_\omega:\A\to B(\fH_\omega)$ be a representation that admits an Exel--Loring approximation.  Let $\pi:\A\to B(\oplus_{\omega\in \Omega} \fH_\omega)$ be the representation defined as
\[
\pi(a)=\bigoplus_{\omega\in \Omega}\ \pi_\omega(a), \quad a\in \A.
\]
Then, $\pi$ admits an Exel--Loring approximation.
\end{lemma}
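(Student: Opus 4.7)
\smallskip
\textbf{Proof proposal.} The plan is to proceed in two stages, first approximating $\pi$ by representations supported on a finite subset of the index set, and then approximating each such truncation using the given Exel--Loring approximations of the $\pi_\omega$.

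For every finite $F\subset \Omega$, introduce the ``truncated'' representation
$$\pi_F:\A\to B\Bigl(\bigoplus_{\omega\in\Omega}\fH_\omega\Bigr), \qquad \pi_F(a):=\Bigl(\bigoplus_{\omega\in F}\pi_\omega(a)\Bigr)\oplus 0,$$
where the zero acts on $\bigoplus_{\omega\notin F}\fH_\omega$. A direct computation shows that for any vector $h=\bigoplus_\omega h_\omega\in\bigoplus_\omega\fH_\omega$ and any $a\in\A$,
$$\|\pi(a)h-\pi_F(a)h\|^2 \leq \|a\|^2 \sum_{\omega\notin F}\|h_\omega\|^2,$$
and likewise for $\pi(a)^*-\pi_F(a)^*$. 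Since $\sum_\omega\|h_\omega\|^2<\infty$, the net $(\pi_F)$, indexed by finite subsets of $\Omega$ ordered by inclusion, is an Exel--Loring approximation of $\pi$ in the sense that $\pi_F(a)\to\pi(a)$ and $\pi_F(a)^*\to\pi(a)^*$ in SOT for every $a\in\A$, except that the representations $\pi_F$ are not themselves finite-dimensional.

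Next I would fix a finite $F\subset\Omega$ and, for each $\omega\in F$, use the hypothesis to pick an Exel--Loring approximation $(\pi_{\omega,\mu})_{\mu\in M_\omega}$ of $\pi_\omega$. Setting $M_F:=\prod_{\omega\in F}M_\omega$ with the product order, the net
$$\pi_{F,\mu}(a):=\Bigl(\bigoplus_{\omega\in F}\pi_{\omega,\mu_\omega}(a)\Bigr)\oplus 0, \qquad \mu=(\mu_\omega)_{\omega\in F}\in M_F,$$
is an Exel--Loring approximation of $\pi_F$. Finite-dimensionality of $\rC^*(\pi_{F,\mu}(\A))\bigl(\bigoplus_\omega\fH_\omega\bigr)$ holds because this subspace is contained in $\bigoplus_{\omega\in F}\rC^*(\pi_{\omega,\mu_\omega}(\A))\fH_\omega$, a finite direct sum of finite-dimensional spaces; SOT convergence of $\pi_{F,\mu}$ and $\pi_{F,\mu}^*$ follows because finite direct sums are jointly SOT-continuous. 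Finally, I would combine these two nets into one: let $\mathcal{N}$ be the set of tuples $(F,S,K,\eps)$ with $F\subset\Omega$ finite, $S\subset\A$ finite, $K\subset\bigoplus_\omega\fH_\omega$ finite and $\eps>0$, ordered by $F_1\subset F_2$, $S_1\subset S_2$, $K_1\subset K_2$ and $\eps_1\geq \eps_2$. For each $\nu=(F,S,K,\eps)\in\mathcal{N}$, choose first a finite $F'\supset F$ such that $\|\pi(a)h-\pi_{F'}(a)h\|<\eps/2$ and $\|\pi(a)^*h-\pi_{F'}(a)^*h\|<\eps/2$ for all $a\in S$, $h\in K$, and then $\mu\in M_{F'}$ such that $\|\pi_{F'}(a)h-\pi_{F',\mu}(a)h\|<\eps/2$ and $\|\pi_{F'}(a)^*h-\pi_{F',\mu}(a)^*h\|<\eps/2$ on the same data. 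Setting $\tau_\nu:=\pi_{F',\mu}$ yields the desired Exel--Loring approximation of $\pi$.

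The main obstacle is bookkeeping rather than depth: one has to ensure the diagonal net is well-defined and that finite-dimensionality is genuinely preserved when the $F$'s grow, which the truncation to a fixed finite $F'$ at each stage handles cleanly. No issue of countability arises because we work throughout with nets indexed by a directed set.
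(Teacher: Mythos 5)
Your proof is correct and follows essentially the same route as the paper: approximate $\pi$ by finite direct sums of the given Exel--Loring approximants padded with zero, and combine the two stages of convergence into a single net. Your explicit diagonal net indexed by $(F,S,K,\eps)$ simply spells out what the paper's proof asserts via iterated limits when it says an Exel--Loring approximation is ``contained in'' the set of such finite direct sums.
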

\begin{proof}
For each $\omega\in \Omega$, there is a directed set $\Lambda_\omega$ and a net 
\[
\pi_{\omega,\lambda}:\A\to B(\fH_\omega), \quad \lambda\in \Lambda_\omega
\]
which is an Exel--Loring approximation for $\pi_\omega$. 
%Let $\Omega$ be the set consisting of pairs $(\lambda,n)$ where $n\in \bN$ and $\lambda\in \Lambda_n$. This is a directed set with respect to the following partial order: $(\lambda,n)\prec (\mu,m)$ if and only if  $m>n$ or if $m=n$ and $\lambda\prec \mu$. 
%Given $\omega=(\lambda,n)\in \Omega$
Let $\F$ denote the directed set of finite subsets of $\Omega$. Given $F=\{\omega_1,\omega_2,\ldots,\omega_n\}\in \F$, it is readily verified that
\[
\lim_{\lambda_1\in \Lambda_{\omega_1}} \lim_{\lambda_2\in \Lambda_{\omega_2}}\cdots  \lim_{\lambda_n\in \Lambda_{\omega_n}}\left(\bigoplus_{i=1}^n \pi_{\omega_i,\lambda_i}(a) \oplus \bigoplus_{\omega\notin \F}0\right)=\bigoplus_{\omega\in F}\pi_\omega(a) \oplus \bigoplus_{\omega\notin \F}0
\]
and
\[
\lim_{\lambda_1\in \Lambda_{\omega_1}} \lim_{\lambda_2\in \Lambda_{\omega_2}}\cdots  \lim_{\lambda_n\in \Lambda_{\omega_n}}\left(\bigoplus_{i=1}^n \pi_{\omega_i,\lambda_i}(a)^* \oplus \bigoplus_{\omega\notin \F}0\right)=\bigoplus_{\omega\in F}\pi_\omega(a)^*\oplus \bigoplus_{\omega\notin \F}0
\]
in the SOT for every $a\in \A$. On the other hand, we see that
\[
\lim_{F\in \F}\left(\bigoplus_{\omega\in F}\pi_\omega(a) \oplus  \bigoplus_{\omega\notin \F}0\right)=\pi(a) \qand \lim_{F\in \F}\left(\bigoplus_{\omega\in F}\pi_\omega(a)^* \oplus  \bigoplus_{\omega\notin \F}0\right)=\pi(a)^*
\]
in the SOT for every $a\in \A$. Thus, there is an Exel--Loring approximation for $\pi$ contained in the set
\[
\left\{\bigoplus_{i=1}^n \pi_{\omega_i,\lambda_i} \oplus \bigoplus_{\omega\neq {\omega_1,\ldots,\omega_n}}0\right\}
\]
where $n\in \bN, \ \omega_1,\omega_2,\ldots,\omega_n\in\Omega, \ \lambda_1\in \Lambda_{\omega_1}, \lambda_2\in \Lambda_{\omega_2}, \ldots,\lambda_n\in \Lambda_{\omega_n}$.
\end{proof}

Two representations $\pi:\A\to B(\fH_\pi)$ and $\sigma:\A\to B(\fH_\sigma)$ are said to be \emph{approximately unitarily equivalent} if there is a sequence of unitary operators
\[
U_n:\fH_\pi\to \fH_\sigma, \quad n\in \bN
\]
with the property that
\[
\lim_{n\to\infty}\|U_n \pi(a) U_n^*-\sigma(a)\|=0, \quad a\in \A.
\]
Another elementary fact we need is that the existence of Exel--Loring approximations is preserved under approximate unitary equivalence. 

\begin{lemma}\label{L:ELaue}
Let $\A$ be an operator algebra and let $\pi:\A\to B(\fH_\pi)$ be a representation that admits an Exel--Loring approximation. Let $\sigma:\A\to B(\fH_\sigma)$ be another representation which is approximately unitarily equivalent to $\pi$. Then, $\sigma$ admits an Exel--Loring approximation.
\end{lemma}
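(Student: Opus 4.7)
The plan is to transport the given Exel--Loring approximation of $\pi$ to one for $\sigma$ by conjugating with the unitaries implementing the approximate unitary equivalence, and then use a triangle inequality argument to absorb the error.

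Let $(U_n)_{n\in\bN}$ be the sequence of unitaries $U_n:\fH_\pi\to\fH_\sigma$ with $\|U_n\pi(a)U_n^*-\sigma(a)\|\to 0$ for each $a\in\A$, and let $(\pi_\lambda)_{\lambda\in\Lambda}$ be the given Exel--Loring approximation of $\pi$. For each $n\in\bN$ and $\lambda\in\Lambda$, define $\sigma_{n,\lambda}:\A\to B(\fH_\sigma)$ by $\sigma_{n,\lambda}(a)=U_n\pi_\lambda(a)U_n^*$. Each $\sigma_{n,\lambda}$ is a representation since conjugation by a unitary is a complete isometry, and
\[
\rC^*(\sigma_{n,\lambda}(\A))\fH_\sigma=U_n\,\rC^*(\pi_\lambda(\A))U_n^*\fH_\sigma=U_n\,\rC^*(\pi_\lambda(\A))\fH_\pi,
\]
which is finite-dimensional by hypothesis.

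Next, I would construct the desired Exel--Loring approximation of $\sigma$ by indexing over the directed set $\Gamma$ of triples $\gamma=(F,S,\eps)$, where $F\subset\A$ and $S\subset\fH_\sigma$ are finite and $\eps>0$, ordered by inclusion of $F,S$ and reverse order on $\eps$. Given $\gamma=(F,S,\eps)$, set $M=1+\max_{x\in S}\|x\|$ and first choose $n=n(\gamma)$ so that $\|U_n\pi(a)U_n^*-\sigma(a)\|<\eps/(2M)$ for all $a\in F$; taking adjoints, the same inequality holds with $\sigma(a)^*$ and $\pi(a)^*$. Since $\{U_n^*x:x\in S\}$ is a finite set in $\fH_\pi$, I can then pick $\lambda=\lambda(\gamma)\in\Lambda$ so that
\[
\|\pi_\lambda(a)U_n^*x-\pi(a)U_n^*x\|<\eps/2\qand\|\pi_\lambda(a)^*U_n^*x-\pi(a)^*U_n^*x\|<\eps/2
\]
for all $a\in F$ and $x\in S$. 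Setting $\tau_\gamma=\sigma_{n(\gamma),\lambda(\gamma)}$, the triangle inequality yields $\|\tau_\gamma(a)x-\sigma(a)x\|<\eps$ and $\|\tau_\gamma(a)^*x-\sigma(a)^*x\|<\eps$ for all $a\in F$ and $x\in S$.

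The net $(\tau_\gamma)_{\gamma\in\Gamma}$ is therefore an Exel--Loring approximation for $\sigma$. There is no real obstacle in this argument; the only step requiring a bit of care is keeping the two factors of error (the approximate unitary equivalence and the Exel--Loring approximation of $\pi$) independent, which is achieved by fixing $n$ first and only afterwards choosing $\lambda$ to approximate on the resulting finite set $\{U_n^*x:x\in S\}\subset\fH_\pi$.
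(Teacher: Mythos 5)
Your proof is correct and follows essentially the same route as the paper: conjugating the given Exel--Loring approximation of $\pi$ by the unitaries implementing the approximate unitary equivalence and extracting a net from the resulting family indexed by pairs $(n,\lambda)$. The paper leaves the final extraction step as an easy verification, whereas you spell out the diagonal-type choice of $n$ before $\lambda$ explicitly; the substance is the same.
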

\begin{proof}
By assumption, there is a sequence of unitary operators
\[
U_n:\fH_\pi\to \fH_\sigma, \quad n\in \bN
\]
with the property that
\begin{equation}\label{Eq:aue}
\lim_{n\to\infty}\| U_n \pi(a) U_n^*-\sigma(a)\|=0, \quad a\in \A.
\end{equation}
Obviously, we also have that
\begin{equation}\label{Eq:aue*}
\lim_{n\to\infty}\| U_n \pi(a) ^*U_n^*-\sigma(a)^*\|=0, \quad a\in \A.
\end{equation}
Let 
\[
\pi_\lambda:\A\to B(\fH), \quad \lambda\in \Lambda
\]
be an Exel--Loring approximation for $\pi$. Now, let $\Omega$ be the directed set consisting of pairs $(n,\lambda)$ where $n\in \bN$ and $\lambda\in \Lambda$. For $\omega=(n,\lambda)\in \Omega$, define a representation $\sigma_\omega:\A\to B(\fH_\sigma)$ as
\[
\sigma_\omega(a)=U_n \pi_{\lambda}(a) U_n^*, \quad a\in \A.
\]
It then follows easily from Equations \eqref{Eq:aue} and \eqref{Eq:aue*} that the set $\{\sigma_\omega:\omega\in \Omega\}$ contains an Exel--Loring approximation for $\sigma$. 
\end{proof}

Next, we show that restrictions to invariant subspaces preserve the existence of finite-dimensional approximations.

\begin{lemma}\label{L:approxinvsub}
 Let $\A$ be an operator algebra and let $\pi:\A\to B(\fH_\pi)$ be a representation. Let $\fH\subset \fH_\pi$ be a closed subspace. The following statements hold.
 \begin{enumerate}[{\rm (i)}]
 \item   Assume $\pi$ admits a finite-dimensional approximation and that $\fH$ is invariant for $\pi(\A)$. Then, the representation 
 \[
 a\mapsto \pi(a)|_\fH, \quad a\in \A
 \]
 admits a finite-dimensional approximation.

 \item   Assume $\pi$ admits an Exel--Loring approximation and that $\fH$ is reducing for $\pi(\A)$. Then, the representation 
 \[
 a\mapsto \pi(a)|_\fH, \quad a\in \A
 \]
admits an Exel--Loring approximation.
 \end{enumerate}

\end{lemma}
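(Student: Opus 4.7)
The plan, for both (i) and (ii), is to transport a given approximation of $\pi$ on $\fH_\pi$ to one of $\pi|_\fH$ on $\fH$, overcoming the fact that $\fH$ need not be invariant (respectively reducing) for the approximants $\pi_\lambda$. Write $\fK_\lambda := \rC^*(\pi_\lambda(\A))\fH_\pi$, which is finite-dimensional by hypothesis. The case of $\fH$ finite-dimensional is trivial (the representation $\pi|_\fH$ is then essentially finite-dimensional and serves as its own constant approximation), so I focus on the case where $\fH$ is infinite-dimensional.

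For part (i), invariance of $\fH$ for $\pi(\A)$ ensures that $(I-P_\fH)\pi(a)|_\fH = 0$, so the compressions $\rho_\lambda(a) := P_\fH \pi_\lambda(a)|_\fH$ converge in the SOT to $\pi|_\fH(a)$ for each $a\in \A$, with range inside the finite-dimensional subspace $P_\fH\fK_\lambda \subset \fH$. The catch is that $\rho_\lambda$ is not a homomorphism; its defect
\[
\rho_\lambda(ab) - \rho_\lambda(a)\rho_\lambda(b) = P_\fH \pi_\lambda(a)(I-P_\fH)\pi_\lambda(b)|_\fH
\]
vanishes in the SOT but is generically nonzero. To produce genuine representations, I would pass to the subspace $\fL_\lambda := \fH + \fK_\lambda$, which is invariant for $\pi_\lambda(\A)$ (since $\pi_\lambda(\A)\fH_\pi \subset \fK_\lambda$) and splits orthogonally as $\fL_\lambda = \fH \oplus \fE_\lambda$ with $\fE_\lambda := P_{\fH^\perp}\fK_\lambda$ finite-dimensional. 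The restriction $\pi_\lambda|_{\fL_\lambda}$ is an essentially finite-dimensional representation of $\A$ on $\fL_\lambda$. Choose unitaries $U_\lambda : \fH \to \fL_\lambda$ agreeing with the inclusion $\fH \hookrightarrow \fL_\lambda$ outside of an auxiliary finite-dimensional subspace of $\fH$ used to absorb $\fE_\lambda$, and set $\sigma_\lambda := U_\lambda^* \pi_\lambda|_{\fL_\lambda} U_\lambda$. Each $\sigma_\lambda$ is then a representation on $\fH$ of finite-dimensional support inherited from $\pi_\lambda$, and the SOT convergence $\sigma_\lambda \to \pi|_\fH$ is secured by reindexing over pairs $(\lambda, \fF)$, where $\fF \subset \fH$ is a finite-dimensional subspace on which $U_\lambda$ is constrained to be the inclusion.

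The main obstacle is this coordinated construction of $U_\lambda$ together with the reindexing: one must simultaneously control the finite-dimensional twist absorbing $\fE_\lambda$ and the rate at which $\pi_\lambda \to \pi$ in the SOT, so as to ensure that the compression defect dissolves as $(\lambda, \fF)$ grows.

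For part (ii), the reducing hypothesis additionally supplies $[P_\fH, \pi(a)^*] = 0$, whence $[P_\fH, \pi_\lambda(a)^*] \to 0$ in the SOT for every $a\in \A$. This symmetry propagates through the construction of part (i) and yields $\rho_\lambda(a)^* \to \pi|_\fH(a)^*$ in the SOT as well, upgrading the finite-dimensional approximation to a full Exel--Loring approximation of $\pi|_\fH$.
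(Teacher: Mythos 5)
Your overall strategy is sound and is, at heart, the same as the paper's: transplant each essentially finite-dimensional approximant into $B(\fH)$ by an isometry that is controlled on finitely many vectors, and recover SOT convergence by enlarging the index set. The execution differs in a worthwhile way. The paper compresses $\pi_\omega$ to its finite-dimensional (reducing) essential subspace $\rC^*(\pi_\omega(\A))\fH_\pi$, maps it into $\fH$ by an arbitrary isometry $W_\lambda$, and corrects $W_\lambda$ by a unitary of $\fH$ obtained from Dixmier's Lemma 3.5.6, indexing over triples (finite $\F\subset\A$, finite $\fV\subset\fH$, $\eps$); you instead restrict $\pi_\lambda$ to the invariant subspace $\fL_\lambda=\fH+\fK_\lambda$ and pull back by a unitary $\fH\to\fL_\lambda$ which is \emph{exactly} the inclusion on a prescribed finite-dimensional $\fF\subset\fH$, indexing over pairs $(\lambda,\fF)$. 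Your variant avoids Dixmier's approximate-fixing lemma and its $\eps$-bookkeeping, which is a genuine (if modest) simplification; both arguments use invariance (resp.\ reducing-ness) at the same spot, namely to know that the target vectors lie in $\fH$.

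Two points need repair. First, the description of $U_\lambda$ as ``agreeing with the inclusion outside of an auxiliary finite-dimensional subspace of $\fH$'' is impossible whenever $\fE_\lambda\neq\{0\}$: a unitary cannot carry a finite-dimensional subspace onto the strictly larger space it would then have to fill. The usable statement is the one in your next sentence: $U_{\lambda,\fF}$ is the inclusion \emph{on} $\fF$ and absorbs $\fE_\lambda$ on the infinite-dimensional complement $\fH\ominus\fF$ (this is exactly where $\dim\fH=\infty$ is needed). Second, the ``main obstacle'' you flag in fact dissolves, but only after you additionally require $\fF$ to contain the image vectors: for $\xi\in\fF$ one has $\sigma_{(\lambda,\fF)}(a)\xi=U_{\lambda,\fF}^*\pi_\lambda(a)\xi=U_{\lambda,\fF}^*\pi(a)\xi+U_{\lambda,\fF}^*(\pi_\lambda(a)-\pi(a))\xi$, and the first term equals $\pi(a)\xi$ precisely when $\pi(a)\xi\in\fF$; invariance of $\fH$ is what allows $\pi(a)\xi$ to be placed in $\fF\subset\fH$. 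Note also that once you conjugate by these exact-fixing unitaries there is no compression defect left to control: each $\sigma_{(\lambda,\fF)}$ is a genuine representation with $\rC^*(\sigma_{(\lambda,\fF)}(\A))\fH\subset U_{\lambda,\fF}^*\fK_\lambda$ finite-dimensional, since $\fK_\lambda$ reduces $\pi_\lambda$ and lies in $\fL_\lambda$. In (ii), the relevant computation is for $\sigma_{(\lambda,\fF)}(a)^*=U_{\lambda,\fF}^*\bigl(\pi_\lambda(a)|_{\fL_\lambda}\bigr)^*U_{\lambda,\fF}$, not for the compressions $\rho_\lambda$; on $\xi\in\fF$ this is $U_{\lambda,\fF}^*\pi_\lambda(a)^*\xi$ (again because $\fK_\lambda$ reduces $\pi_\lambda$), and one needs $\pi(a)^*\xi\in\fF$, which is where reducing-ness of $\fH$ enters --- exactly parallel to the paper's inclusion of the vectors $\pi(a)^*\xi$ in its Dixmier-lemma step. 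With these adjustments your argument is complete.
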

\begin{proof}
Clearly, we may assume without loss of generality that $\fH$ is infinite-dimensional. Throughout the proof, we let $\Lambda$ denote the directed set of triples $(\F,\fV,\epsilon)$ where $\epsilon>0$, $\F$ is a finite subset of the unit ball of $\A$, and $\fV$ is a finite subset  of $\fH$.

(i) By assumption, there is a net
\[
 \pi_\omega:\A\to B(\fH_\pi), \quad \omega\in \Omega
\]
of representations such that $\rC^*(\pi_\omega(\A))\fH_\pi$ is finite-dimensional and $(\pi_\omega(a))$ converges in the SOT to $\pi(a)$ for every $a\in \A$. Thus,
\[
 \langle \pi(a)\xi,\pi(b)\eta\rangle=\lim_{\omega}\langle \pi_\omega(a)\xi,\pi_\omega(b)\eta \rangle
 \]
 for every $a,b\in \A$ and $\xi,\eta\in \fH_\pi$.
 
 Next, fix an element $\lambda=(\F,\fV,\epsilon)\in \Lambda$.  
 Let $\eps'>0$ be the positive number obtained by applying \cite[Lemma 3.5.6]{dixmier1977} to the finite subset
\[
\{x,\pi(a)\xi:a\in \F,\xi\in \fV\}
\]
which lies in $\fH$, since $\fH$ is assumed to be invariant for $\pi(\A)$. 
Let $\omega_\lambda\in \Omega$ be chosen large enough such that
\[
| \langle \pi(a)\xi,\pi(b)\eta\rangle-\langle \pi_{\omega_\lambda}(a)\xi,\pi_{\omega_\lambda}(b)\eta \rangle |<\eps'
\]
for every $a,b\in \F, \xi,\eta\in \fV$.  Because $\fH$ is infinite-dimensional, there is an isometry $W_\lambda:\rC^*(\pi_{\omega_\lambda}(\A))\fH_\pi\to \fH$.  Using \cite[Lemma 3.5.6]{dixmier1977}, there is a unitary operator $U_\lambda\in B(\fH)$ such that
\[
 \|U_{\lambda} W_\lambda x-x\|<\epsilon
\]
and
\[
\| U_{\lambda} W_\lambda \pi_{\omega_\lambda}(a)x-\pi(a)x\|<\epsilon
\]
for every $a\in \F, x\in \fV$.  Put $V_\lambda=U_\lambda W_\lambda: \rC^*(\pi_{\omega_\lambda}(\A))\fH_\pi\to \fH$. For $a\in \F, x\in \fV$, we note that
\[
\| V_\lambda^* x-x\|\leq \|x-V_\lambda x\|<\eps
\]
whence
\begin{align*}
\| V_\lambda \pi_{\omega_\lambda}(a) V^*_\lambda x -\pi(a)x\|&=\|V_\lambda \pi_{\omega_\lambda}(a) V^*_\lambda x -V_\lambda \pi_{\omega_\lambda}(a)x\|+\| V_\lambda \pi_{\omega_\lambda}(a)x-\pi(a)x\|\\
&<\| \pi_{\omega_\lambda}(a) \|  \| V_\lambda^*x- x\|+\eps\\
&\leq (1+\|a\|)\epsilon\leq 2\eps.
\end{align*}
We thus conclude that the net $(V_\lambda \pi_{\omega_\lambda}(a) V^*_\lambda)_\lambda$ converges in the SOT to $\pi(a)|_{\fH}$ for every $a\in \A$. Moreover, for each $\lambda\in \Lambda$ we see that $\rC^*( V_\lambda \pi_{\omega_\lambda}(\A)V_\lambda^*)\fH$ is finite-dimensional since it is contained in $V_\lambda \rC^*(\pi_{\omega_\lambda}(\A))\fH_\pi$. Thus, $(V_\lambda \pi_{\omega_\lambda}(a) V^*_\lambda)_\lambda$ is the desired finite-dimensional approximation.

(ii) The argument is similar to the one above, with some minor modifications. By assumption, there is a net
\[
 \pi_\omega:\A\to B(\fH_\pi), \quad \omega\in \Omega
\]
of representations such that $\rC^*(\pi_\omega(\A))\fH_\pi$ is finite-dimensional. Moreover, for every $a\in \A$, in the SOT we have that $(\pi_\omega(a))$ converges $\pi(a)$ while $(\pi_\omega(a))^*$ converges $\pi(a)^*$. Thus,
\[
 \langle \pi(a)\xi,\pi(b)\eta\rangle=\lim_{\omega}\langle \pi_\omega(a)\xi,\pi_\omega(b)\eta \rangle
 \]
 \[
 \langle \pi(a)\xi,\pi(b)^*\eta\rangle=\lim_{\omega}\langle \pi_\omega(a)\xi,\pi_\omega(b)^*\eta \rangle
 \]
 \[
 \langle \pi(a)^*\xi,\pi(b)^*\eta\rangle=\lim_{\omega}\langle \pi_\omega(a)^*\xi,\pi_\omega(b)^*\eta \rangle
 \]
 for every $a,b\in \A$ and $\xi,\eta\in \fH_\pi$.

 Next, fix an element $\lambda=(\F,\fV,\epsilon)\in \Lambda$.  Let $\eps'>0$ be the positive number obtained by applying \cite[Lemma 3.5.6]{dixmier1977} to the finite subset
\[
\{x,\pi(a)\xi,\pi(a)^*\xi:a\in \F,\xi\in \fV\}
\]
which lies in $\fH$, since $\fH$ is assumed to be reducing for $\pi(\A)$. 
Let $\omega_\lambda\in \Omega$ be chosen large enough such that
\[
| \langle \pi(a)\xi,\pi(b)\eta\rangle-\langle \pi_{\omega_\lambda}(a)\xi,\pi_{\omega_\lambda}(b)\eta \rangle |<\eps',
\]
\[
| \langle \pi(a)\xi,\pi(b)^*\eta\rangle-\langle \pi_{\omega_\lambda}(a)\xi,\pi_{\omega_\lambda}(b)^*\eta \rangle |<\eps',
\]
and
\[
| \langle \pi(a)^*\xi,\pi(b)^*\eta\rangle-\langle \pi_{\omega_\lambda}(a)^*\xi,\pi_{\omega_\lambda}(b)^*\eta \rangle |<\eps',
\]
for every $a,b\in \F, \xi,\eta\in \fV$.  Arguing as in (i), we find an isometry 
\[
V_\lambda: \rC^*(\pi_{\omega_\lambda}(\A))\fH_\pi\to \fH
\]
with the property that
\[
 \|V_{\lambda} x-x\|<\epsilon
\]
and
\[
\| V_{\lambda}\pi_{\omega_\lambda}(a)x-\pi(a)x\|<\epsilon, \quad \| V_{\lambda} \pi_{\omega_\lambda}(a)^*x-\pi(a)^*x\|<\eps
\]
for every $a\in \F, x\in \fV$. As before, the net $(V_\lambda \pi_{\omega_\lambda}(a) V^*_\lambda)_\lambda$ is verified to be the desired Exel--Loring approximation.
%For $a\in \F, x\in \fV$, we note that
%\begin{align*}
%\| (U^*_\lambda \pi_{\omega_\lambda}(a) U_\lambda -\pi(a))x\|&=\| U^*_\lambda \pi_{\omega_\lambda}(a) U_\lambda x-U^*_\lambda U_\lambda \pi(a)x\|\\
%&=\| \pi_{\omega_\lambda}(a) U_\lambda x-U_\lambda \pi(a)x\|\\
%&\leq \| \pi_{\omega_\lambda}(a) U_\lambda x-\pi_\omega(a)  x\|+\| \pi_{\omega_\lambda}(a)  x-U_\lambda \pi(a)x\|\\
%&\leq \|\pi_{\omega_\lambda}(a)\| \|U_\lambda x- x\|+\| \pi_{\omega_\lambda}(a)  x-U_\lambda \pi(a)x\|\\
%&\leq (1+\|a\|)\epsilon\leq 2\eps
%\end{align*}
%and
%\begin{align*}
%\| (U^*_\lambda \pi_{\omega_\lambda}(a)^* U_\lambda -\pi(a)^*)x\|&=\| U^*_\lambda \pi_{\omega_\lambda}(a)^* U_\lambda x-U^*_\lambda U_\lambda \pi(a)^*x\|\\
%&=\| \pi_{\omega_\lambda}(a)^* U_\lambda x-U_\lambda \pi(a)^*x\|\\
%&\leq \| \pi_{\omega_\lambda}(a)^* U_\lambda x-\pi_\omega(a)^*  x\|+\| \pi_{\omega_\lambda}(a)^*  x-U_\lambda \pi(a)^*x\|\\
%&\leq \|\pi_{\omega_\lambda}(a)\| \|U_\lambda x- x\|+\| \pi_{\omega_\lambda}(a)^*  x-U_\lambda \pi(a)^*x\|\\
%&\leq (1+\|a\|)\epsilon\leq 2\eps.
%\end{align*}
%We thus conclude that the net $(U_\lambda^* \pi_{\omega_\lambda}(a) U_\lambda)_\lambda$ converges in SOT* to $\pi(a)|_{\fH}$ for every $a\in \A$. 
\end{proof}

An important application of the preceding sequence of lemmas is the next tool.

\begin{theorem}\label{T:Voiculescu}
 Let $\A$ be a separable operator algebra. Let $\pi:\rC^*(\A)\to B(\fH_\pi)$ and $\sigma:\rC^*(\A)\to B(\fH_\sigma)$ be two $*$-representations. Assume that $\pi$ is injective and that $\pi|_\A$ admits an Exel--Loring approximation. Then, $\sigma|_\A$ also admits an Exel--Loring approximation. 
\end{theorem}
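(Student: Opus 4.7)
The name attached to the theorem signals that Voiculescu's absorption theorem for $*$-representations of separable $\rC^*$-algebras will be the central engine. The strategy is to realize $\sigma$ as (an approximate) reducing piece of an infinite ampliation of $\pi$, and then transport the Exel--Loring approximation across three successive reductions using Lemmas \ref{L:directsum}, \ref{L:ELaue}, and \ref{L:approxinvsub}(ii).

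First, I would set up a \emph{separable and faithful} version of $\pi$. Since $\A$ is separable, so is $\rC^*(\A)$. Decompose $\pi$ as an orthogonal direct sum of cyclic $*$-subrepresentations, each of which acts on a separable Hilbert space. Using a countable dense subset $\{a_n\}$ of $\rC^*(\A)$ together with the injectivity of $\pi$, extract a countable subfamily whose direct sum $\pi'$ remains injective and acts on a separable Hilbert space (for each $n$ and $k$ pick a summand $\pi_{\alpha(n,k)}$ with $\|\pi_{\alpha(n,k)}(a_n)\|>\|a_n\|-1/k$; the direct sum is isometric on a dense set, hence faithful). Each cyclic summand of $\pi$ is reducing, so Lemma \ref{L:approxinvsub}(ii) delivers an Exel--Loring approximation for its restriction to $\A$. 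Two applications of Lemma \ref{L:directsum} then produce an Exel--Loring approximation for $(\pi')^{(\infty)}|_\A$, where $(\pi')^{(\infty)}$ denotes the countable ampliation of $\pi'$, still acting on a separable Hilbert space.

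Next, decompose $\sigma=\bigoplus_{\omega}\sigma_\omega$ as a direct sum of cyclic $*$-subrepresentations, each acting on a separable Hilbert space. For every index $\omega$, Voiculescu's absorption theorem applies to the two nondegenerate $*$-representations $(\pi')^{(\infty)}$ and $\sigma_\omega\oplus(\pi')^{(\infty)}$ of the separable $\rC^*$-algebra $\rC^*(\A)$: both share the trivial kernel (by faithfulness of $\pi'$), and both send every nonzero $a\in\rC^*(\A)$ to a non-compact operator (thanks to the infinite multiplicity of $\pi'$), so the hypotheses of the absorption form of Voiculescu's theorem are met. Consequently $(\pi')^{(\infty)}$ and $\sigma_\omega\oplus(\pi')^{(\infty)}$ are approximately unitarily equivalent as $*$-representations, hence also upon restriction to $\A$ as completely contractive homomorphisms. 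Lemma \ref{L:ELaue} therefore yields an Exel--Loring approximation for $(\sigma_\omega\oplus(\pi')^{(\infty)})|_\A$, and since $\fH_{\sigma_\omega}$ is a reducing subspace for the $*$-representation $\sigma_\omega\oplus(\pi')^{(\infty)}$, Lemma \ref{L:approxinvsub}(ii) produces an Exel--Loring approximation for $\sigma_\omega|_\A$. A final invocation of Lemma \ref{L:directsum} then assembles these into an Exel--Loring approximation for $\sigma|_\A$.

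The main obstacle is simply matching the hypotheses of Voiculescu's theorem to our setting: separability of the Hilbert spaces on both sides must be engineered (here by the countable extraction of a faithful subrepresentation of $\pi$ and by decomposing $\sigma$ cyclically), and the conclusion, a $*$-algebraic statement, must be compatible with the Exel--Loring framework. This compatibility is exactly what Lemma \ref{L:approxinvsub}(ii) provides---it is crucial that the approximations are Exel--Loring rather than merely finite-dimensional, so that the $*$-structure passes to reducing subspaces.
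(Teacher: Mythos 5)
Your proposal is correct and follows essentially the same route as the paper's proof: reduce $\pi$ to a faithful non-degenerate summand on a separable space, decompose $\sigma$ into cyclic pieces, apply Voiculescu's absorption theorem (Davidson, Corollary II.5.6) to infinite ampliations so that approximate unitary equivalence holds, and transfer the Exel--Loring approximation through Lemmas \ref{L:directsum}, \ref{L:ELaue}, and \ref{L:approxinvsub}(ii). The only cosmetic differences are that the paper ampliates $\sigma$ as well and splits off the degenerate part $\sigma=\sigma'\oplus 0$ explicitly (a point you gloss over, though it is harmless since the zero representation trivially admits an Exel--Loring approximation), whereas you treat each cyclic summand $\sigma_\omega$ separately and reassemble at the end.
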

\begin{proof}
Because $\pi$ is injective and $\A$ is separable, we may find a separable closed subspace $\fH_\pi'\subset \fH_\pi$ which is reducing for $\pi(\rC^*(\A))$ and such that the restriction $\pi':\rC^*(\A)\to B(\fH_\pi')$ defined as
\[
\pi'(t)= \pi(t)|_{\fH'_\pi}, \quad t\in \rC^*(\A)
\]
is injective and non-degenerate.  By Lemma \ref{L:approxinvsub}, we see that $\pi'$ also admits an Exel--Loring approximation. Thus, we may assume that $\pi$ itself is injective, non-degenerate, and that $\fH_\pi$ is separable. 

Next, we may write $\sigma=\sigma'\oplus 0$, where $\sigma'$ is a non-degenerate $*$-representation which can, in turn, be decomposed as the direct sum of cyclic $*$-representations. Lemma \ref{L:directsum} implies that it suffices to establish the desired result for all these cyclic $*$-representations. Thus, we may simply assume that $\sigma$ is non-degenerate and that $\fH_\sigma$ is separable.

Consider then the non-degenerate $*$-representations $\Pi:\rC^*(\A)\to B(\H_\pi^{(\infty)})$ and $\Sigma: \rC^*(\A)\to B(\H_\sigma^{(\infty)})$ defined as
\[
\Pi(t)=\pi(t)^{(\infty)} \qand \Sigma(t)=\sigma(t)^{(\infty)} 
\]
for every $t\in \rC^*(\A)$. In particular, we see that $\Pi$ is injective, and that $\Pi(\rC^*(\A))$ and $\Sigma(\rC^*(\A))$ contain no compact operators. The non-degenerate $*$-representations $\Pi$ and $\Sigma\oplus \Pi$ are both injective, so that by \cite[Corollary II.5.6]{Davbook}  they are approximately unitarily equivalent.  By assumption, $\pi|_\A$ admits an Exel--Loring approximation, and thus so does $\Pi|_\A$ by Lemma \ref{L:directsum}. In turn, Lemma \ref{L:ELaue} implies that $(\Sigma\oplus \Pi)|_{\A}$ admits an Exel--Loring approximation. Finally, Lemma \ref{L:approxinvsub} implies that $\Sigma|_\A$ and $\sigma|_\A$ admit an Exel--Loring approximations as well.
\end{proof}

The final preliminary tool we require is the following, which provides Exel--Loring approximations for representations that extend to $*$-representations of some RFD $\rC^*$-cover.

\begin{lemma}\label{L:extEL}
Let $(\fB, \iota)$ be a $\rC^*$-cover of some operator algebra $\A$ such that $\fB$ is RFD.  Let $\pi$ be a representation of $\A$ such that $\pi\circ \iota^{-1}$ extends to a $*$-representation of $\fB$. Then, $\pi$ admits an Exel--Loring approximation.
\end{lemma}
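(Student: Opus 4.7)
The strategy is to transfer the Exel--Loring approximation available at the C*-algebraic level through the cover $\iota$, yielding the desired approximation of $\pi$. By hypothesis, the map $\pi \circ \iota^{-1} : \iota(\A) \to B(\fH_\pi)$ extends to a $*$-representation $\tilde{\pi} : \fB \to B(\fH_\pi)$. Since $\fB$ is RFD, the Exel--Loring theorem \cite[Theorem 2.4]{EL1992} recalled in the introduction produces a net of $*$-representations
\[
\tilde{\pi}_\lambda : \fB \to B(\fH_\pi), \quad \lambda \in \Lambda,
\]
such that $\rC^*(\tilde{\pi}_\lambda(\fB))\fH_\pi$ is finite-dimensional for each $\lambda$ and $\tilde{\pi}_\lambda(b) \to \tilde{\pi}(b)$ in the SOT for every $b \in \fB$.

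The natural candidate is then $\pi_\lambda := \tilde{\pi}_\lambda \circ \iota : \A \to B(\fH_\pi)$. Each $\pi_\lambda$ is completely contractive as a composition of completely contractive maps, and from
\[
\rC^*(\pi_\lambda(\A))\fH_\pi \subseteq \rC^*(\tilde{\pi}_\lambda(\fB))\fH_\pi
\]
we see that $\rC^*(\pi_\lambda(\A))\fH_\pi$ is finite-dimensional. The SOT convergence $\pi_\lambda(a) \to \pi(a)$ for every $a \in \A$ is immediate from $\pi_\lambda(a) = \tilde{\pi}_\lambda(\iota(a)) \to \tilde{\pi}(\iota(a)) = \pi(a)$.

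The convergence of adjoints---which is what distinguishes an Exel--Loring approximation from a mere finite-dimensional one---is the only point that requires a brief remark. Although $\iota(a)^*$ generally does not lie in $\iota(\A)$, it does lie in $\fB$, so applying the SOT convergence of $\tilde{\pi}_\lambda$ at the element $\iota(a)^*$ and using that each $\tilde{\pi}_\lambda$ is a $*$-homomorphism yields
\[
\pi_\lambda(a)^* = \tilde{\pi}_\lambda(\iota(a))^* = \tilde{\pi}_\lambda(\iota(a)^*) \to \tilde{\pi}(\iota(a)^*) = \tilde{\pi}(\iota(a))^* = \pi(a)^*.
\]
This is precisely where the $*$-structure of the cover intervenes. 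There is no genuine obstacle in the argument; the lemma amounts to a compatibility check between the classical Exel--Loring approximation for C*-algebras and the operator-algebraic version introduced in this section.
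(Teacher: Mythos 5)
Your proposal is correct and follows essentially the same route as the paper's proof: apply the Exel--Loring theorem to the extending $*$-representation of the RFD cover $\fB$, compose the resulting net with $\iota$, and verify finite-dimensionality together with SOT convergence of both the operators and their adjoints (the latter using $*$-multiplicativity at $\iota(a)^*\in\fB$). No gaps.
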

\begin{proof}
Write $\pi:\A\to B(\fH_\pi)$. By assumption, there is a $*$-representation $\sigma:\fB\to B(\fH_\pi)$ such that $\sigma\circ \iota=\pi.$ Invoking \cite[Theorem 2.4]{EL1992}, we know that $\sigma$ admits an Exel--Loring approximation
\[
 \sigma_\lambda:\fB\to  B(\fH_\pi),\quad \lambda\in \Lambda.
\]
This means that $\sigma_\lambda(\fB)\fH_\pi$ is finite-dimensional for every $\lambda\in \Lambda$, and $(\sigma_\lambda(b))$ converges to $\sigma(b)$ in SOT for every $b\in\fB$.
For each $\lambda\in\Lambda$, let $\pi_\lambda=\sigma_\lambda\circ \iota$. Note first that
\[
\rC^*(\pi_\lambda(\A))\fH_\pi=\sigma_\lambda(\fB)\fH_\pi
\]
is finite-dimensional for every $\lambda\in \Lambda$. Next, fix $a\in \A$. Then, we find
\[
\pi_\lambda(a)=\sigma_\lambda(\iota(a)) \qand \pi_\lambda(a)^*=\sigma_\lambda(\iota(a)^*)
\]
for every $\lambda\in \Lambda$. We infer that $(\pi_\lambda(a))$ converges to $\pi(a)$ and $(\pi_\lambda(a)^*)$ converges to $\pi(a)^*$ in SOT. Consequently, $(\pi_\lambda)$ is an Exel--Loring approximation of $\pi$.
\end{proof}

\section{Characterizations of residual finite-dimensionality}\label{S:charRFD}
In this section, we use finite-dimensional approximations and the related tools developed in Section \ref{S:fdimapprox} to analyze residual finite-dimensionality of general (that is, possibly non-selfadjoint) operator algebras. We start with a characterization of residual finite-dimensionality. 

\begin{theorem}\label{T:ELOA}
 Let $\A$ be an operator algebra. The following are equivalent.
 \begin{enumerate}[{\rm (i)}]
  \item The algebra $\A$ is RFD.
  \item Every extremal representation of $\A$ admits a finite-dimensional approximation. 
  \item Every extremal representation of $\A$ admits an Exel--Loring approximation. 
  \item Every coextension-extremal representation of $\A$ admits a finite-dimensional approximation.
 \end{enumerate}
\end{theorem}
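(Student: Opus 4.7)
The plan is to establish the cycle (i) $\Rightarrow$ (iii) $\Rightarrow$ (ii) $\Rightarrow$ (i), supplemented by the implications (i) $\Rightarrow$ (iv) $\Rightarrow$ (ii). Two of these are immediate from the definitions: an Exel--Loring approximation is in particular a finite-dimensional approximation, giving (iii) $\Rightarrow$ (ii); and every extremal representation is coextension-extremal (coextensions are a subclass of dilations), giving (iv) $\Rightarrow$ (ii).

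For (i) $\Rightarrow$ (iii), I would first upgrade residual finite-dimensionality of $\A$ to the existence of an \emph{RFD} $\rC^*$-cover. A completely norming family of finite-dimensional representations assembles into a completely isometric map $\iota:\A\hookrightarrow\bigoplus_\alpha B(\fH_\alpha)$ with each $\fH_\alpha$ finite-dimensional, and the generated subalgebra $\fB:=\rC^*(\iota(\A))$ is a $\rC^*$-cover of $\A$ which, as a $\rC^*$-subalgebra of a direct sum of matrix algebras, is itself RFD. Given any extremal representation $\pi$ of $\A$, part (i) of Theorem \ref{T:extreps} provides a $*$-representation $\sigma:\fB\to B(\fH_\pi)$ extending $\pi\circ\iota^{-1}$, and Lemma \ref{L:extEL} then supplies the desired Exel--Loring approximation of $\pi$.

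For (i) $\Rightarrow$ (iv), given a coextension-extremal representation $\pi$, I would invoke part (iii) of Theorem \ref{T:extreps} to extend $\pi$ to an extremal representation $\sigma$ on a larger Hilbert space $\fH_\sigma\supset\fH_\pi$ with $\fH_\pi$ invariant for $\sigma(\A)$. The already-established implication (i) $\Rightarrow$ (iii) furnishes a finite-dimensional (even Exel--Loring) approximation of $\sigma$, and Lemma \ref{L:approxinvsub}(i) transfers it to a finite-dimensional approximation of $\pi$.

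For (ii) $\Rightarrow$ (i), the plan is to exhibit a completely isometric extremal representation whose finite-dimensional approximation can be distilled into a completely norming family of honest finite-dimensional representations. Dilating the inclusion $\A\hookrightarrow\rC^*_e(\A)$ to an extremal representation $\sigma$ via Theorem \ref{T:extreps} produces such a $\sigma$, completely isometric because the original map is recovered as its compression. By (ii), $\sigma$ admits a finite-dimensional approximation $(\sigma_\lambda)$. A direct verification shows that $V_\lambda:=\rC^*(\sigma_\lambda(\A))\fH_\sigma$ is reducing for $\sigma_\lambda(\A)$, and in fact $\sigma_\lambda$ vanishes on $V_\lambda^\perp$, since $\sigma_\lambda(a)\fH_\sigma\subset V_\lambda$ for every $a\in\A$ while $V_\lambda^\perp$ is invariant under $\sigma_\lambda(\A)$; hence the restriction $\sigma_\lambda':=\sigma_\lambda|_{V_\lambda}$ is a genuine finite-dimensional representation with the same matricial norms as $\sigma_\lambda$. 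Combining entry-wise SOT convergence $\sigma_\lambda^{(n)}([a_{ij}])\to\sigma^{(n)}([a_{ij}])$ with the uniform bound $\|\sigma_\lambda^{(n)}([a_{ij}])\|\le\|[a_{ij}]\|$ and the identity $\|\sigma^{(n)}([a_{ij}])\|=\|[a_{ij}]\|$ forces $\|\sigma_\lambda^{(n)}([a_{ij}])\|\to\|[a_{ij}]\|$, so the family $\{\sigma_\lambda'\}$ completely norms $\A$. The main subtlety lies in this final step: the definition keeps approximating representations acting on the ambient Hilbert space rather than on finite-dimensional ones, and the observation that $V_\lambda^\perp$ is actually annihilated by $\sigma_\lambda$ (and not merely invariant) is what cleanly converts abstract SOT convergence into bona fide finite-dimensional witnesses of RFD.
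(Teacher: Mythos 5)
Your proposal is correct and follows essentially the same route as the paper: an RFD $\rC^*$-cover together with Theorem \ref{T:extreps} and Lemma \ref{L:extEL} gives (i)$\Rightarrow$(iii), an extremal extension plus Lemma \ref{L:approxinvsub} handles (iv), and for (ii)$\Rightarrow$(i) one cuts each approximant down to the finite-dimensional reducing subspace $\rC^*(\sigma_\lambda(\A))\fH_\sigma$ on whose complement $\sigma_\lambda$ vanishes, exactly as in the paper. The only slip is attributional: the dilation of a completely isometric representation to an extremal one in your (ii)$\Rightarrow$(i) is not furnished by Theorem \ref{T:extreps} (which only treats extension- or coextension-extremal inputs) but by the Dritschel--McCullough dilation theorem, which is precisely what the paper invokes at that point.
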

\begin{proof}
(iii) $\Rightarrow$ (ii): This is trivial.

(ii) $\Rightarrow$ (i): By virtue of \cite[Theorems 1.1 and 1.2]{dritschel2005} combined with \cite[Proposition 2.5]{DS2018}, we may find a set $\S$ of extremal representations of $\A$ with the property that the map $\bigoplus_{\pi\in \S}\pi$ is completely isometric. Let $d\in \bN$ and $A\in \bM_d(\A)$ with $\|A\|=1$. Let $\eps>0$. There is a representation $\pi:\A\to B(\fH_\pi)$ in $\S$ such that 
\[
\|\pi^{(d)}(A)\|\geq (1-\eps).
\]
On the other hand, by assumption we see that $\pi$ admits a finite-dimensional approximation $(\pi_\lambda)_{\lambda\in \Lambda}$. Thus, there is $\lambda\in \Lambda$ with the property that
\[
\|\pi_\lambda^{(d)}(A)\|\geq (1-2\eps).
\]
Let $\fH_\lambda=\rC^*(\pi_\lambda(\A))\fH_\pi$. This is a finite-dimensional reducing subspace for $\pi_\lambda$. Define $\rho_\lambda:\A\to B(\fH_\lambda)$ to  be the representation
\[
\rho_\lambda(a)=\pi_\lambda(a)|_{\fH_\lambda}, \quad a\in \A.
\]
We find
\[
\pi_\lambda(a)=\rho_\lambda(a)\oplus 0, \quad a\in\A
\]
whence $\pi_\lambda^{(d)}(A)$ is unitarily equivalent to $\rho_\lambda^{(d)}(A)\oplus 0$, so that
\[
\|\rho_\lambda^{(d)}(A)\|\geq (1-2\eps).
\]
We conclude that $\A$ is RFD.

(i) $\Rightarrow$ (iii): It follows readily from the definition that because $\A$ is RFD, there is a $\rC^*$-cover $(\fB,\iota)$ of $\A$ such that $\fB$ is an RFD $\rC^*$-algebra. Let $\pi$ be an extremal representation of $\A$.  By virtue of Theorem \ref{T:extreps}, we may find a $*$-representation $\sigma$ of $\fB$ such that $\sigma\circ \iota=\pi$. It then follows from Lemma \ref{L:extEL} that $\pi$ admits an Exel--Loring approximation.

(iv) $\Rightarrow$ (ii): This is trivial.

(ii) $\Rightarrow$ (iv): Let $\pi$ be a coextension-extremal representation of $\A$. By Theorem \ref{T:extreps}, we see that there is an extremal representation $\sigma$ of $\A$ which is an extension of $\pi$. Since $\sigma$ is assumed to admit a finite-dimensional approximation, Lemma \ref{L:approxinvsub} implies that $\pi$ admits a finite-dimensional approximation as well.
\end{proof}

Let $\A$ be an operator algebra and let $\pi$ be an extremal representation of $\A$. Let $(\rC^*_e(\A),\eps)$ denote the $\rC^*$-envelope of $\A$. By Theorem \ref{T:extreps}, we see that $\pi\circ \eps^{-1}$  extends to a $*$-representation of $\rC^*_e(\A)$. Unfortunately, the converse statement fails in general, in the sense that there are $*$-representations of $\rC^*_e(\A)$ whose restriction to $\eps(\A)$ is not extremal; in other words, there are operator algebras which are not \emph{hyperrigid}  \cite{arveson2011}. We mention that this kind of pathology occurs even in the classical setting of uniform algebras on compact metric spaces \cite[page 42]{phelps2001}. 

In light of this discussion, for non-hyperrigid RFD operator algebras, Theorem  \ref{T:ELOA} is inadequate to deal with general $*$-representations of the $\rC^*$-envelope. We remedy this shortcoming in the next result, at least in the separable setting.

\begin{theorem}\label{T:RFDcharC*env}
Let $\A$ be a separable operator algebra and let $(\rC^*_e(\A), \eps)$ denote its $\rC^*$-envelope. Then $\A$ is RFD if and only if $\sigma\circ \eps$ admits an Exel--Loring approximation for every $*$-representation $\sigma$ of $\rC^*_e(\A)$.
\end{theorem}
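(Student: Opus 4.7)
The plan is to bridge Theorem \ref{T:ELOA}, which characterizes residual finite-dimensionality via Exel--Loring approximations of \emph{extremal} representations, and the desired statement about arbitrary $*$-representations of $\rC^*_e(\A)$, by means of Theorem \ref{T:Voiculescu}. The backward implication is essentially formal: by Theorem \ref{T:extreps}(i), every extremal representation of $\A$ lifts to a $*$-representation of $\rC^*_e(\A)$, and hence admits an Exel--Loring approximation by hypothesis, so $\A$ is RFD by Theorem \ref{T:ELOA}.

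For the forward direction, assume $\A$ is RFD. The strategy is to first produce a single \emph{injective} $*$-representation $\wh\rho$ of $\rC^*_e(\A)$ whose restriction to $\A$ admits an Exel--Loring approximation, and then appeal to Theorem \ref{T:Voiculescu} (this is where separability of $\A$ enters) to propagate this property to arbitrary $*$-representations of $\rC^*_e(\A)$. To construct $\wh\rho$, I would invoke Dritschel--McCullough \cite{dritschel2005} (as in the proof of (ii)$\Rightarrow$(i) in Theorem \ref{T:ELOA}) to obtain a family $\S$ of extremal representations of $\A$ whose direct sum $\rho = \bigoplus_{\pi\in\S}\pi$ is completely isometric. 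Each $\pi\in\S$ admits an Exel--Loring approximation by Theorem \ref{T:ELOA}(iii), so Lemma \ref{L:directsum} shows $\rho$ does too. By Theorem \ref{T:extreps}(i), each $\pi\in\S$ lifts to a $*$-representation $\wh\pi$ of $\rC^*_e(\A)$ with $\wh\pi\circ\eps = \pi$, and $\wh\rho := \bigoplus_{\pi\in\S}\wh\pi$ is then a $*$-representation of $\rC^*_e(\A)$ satisfying $\wh\rho\circ\eps = \rho$.

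The crucial step is to verify that $\wh\rho$ is injective, which I expect to be the main (though mild) obstacle. This follows from the Shilov boundary characterization of the $\rC^*$-envelope: since $\rho = \wh\rho\circ\eps$ is completely isometric, the quotient map $\rC^*_e(\A)\to \rC^*_e(\A)/\ker\wh\rho$ remains completely isometric on $\eps(\A)$, so $\ker\wh\rho$ is a boundary ideal, and the minimality of $\rC^*_e(\A)$ among $\rC^*$-covers forces $\ker\wh\rho = \{0\}$. Having produced an injective $*$-representation of $\rC^*_e(\A)$ whose restriction to $\A$ has an Exel--Loring approximation, Theorem \ref{T:Voiculescu} applied with $\wh\rho$ playing the role of the injective $*$-representation and an arbitrary $*$-representation $\sigma$ of $\rC^*_e(\A)$ playing the role of $\sigma$ therein immediately yields that $\sigma\circ\eps$ admits an Exel--Loring approximation, completing the proof.
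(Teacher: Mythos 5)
Your argument is correct, and in the forward direction it takes a genuinely different route from the paper. The paper passes to the unitization $\B$ of $\A$ and invokes Arveson's results on the existence of boundary representations in the separable setting to produce a single injective $*$-representation $\pi$ of $\rC^*_e(\B)$ on a \emph{separable} Hilbert space whose restriction to the copy of $\A$ is extremal; after splitting off the degenerate part $\pi=\pi_0\oplus\pi_1$, the summand $\pi_0$ is injective on $\rC^*_e(\A)$ and its restriction to $\A$ gets an Exel--Loring approximation from Theorem \ref{T:ELOA}, after which Theorem \ref{T:Voiculescu} finishes the proof. You instead take the full Dritschel--McCullough family $\S$ of extremal representations with $\rho=\bigoplus_{\pi\in\S}\pi$ completely isometric, give $\rho$ an Exel--Loring approximation via Theorem \ref{T:ELOA} and Lemma \ref{L:directsum}, lift each summand through Theorem \ref{T:extreps}(i), and establish injectivity of $\wh\rho$ by the boundary-ideal argument: since $\wh\rho\circ\eps$ is completely isometric, $\ker\wh\rho$ is a boundary ideal, and the universal property of $\rC^*_e(\A)$ (the induced endomorphism of $\rC^*_e(\A)$ fixes $\eps(\A)$, hence is the identity) forces it to vanish -- this last step is standard but worth spelling out in one line. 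Both proofs then funnel through Theorem \ref{T:Voiculescu}, whose hypotheses only require separability of $\A$, so the fact that your $\wh\rho$ acts on a possibly non-separable direct sum is harmless. What the paper's route buys is a single injective representation on a separable space whose restriction is itself extremal (at the cost of the unitization step and the deeper boundary-representation existence theorems); what your route buys is a more elementary and self-contained argument that stays entirely within the toolkit already developed in the paper (Dritschel--McCullough maximal dilations, the direct-sum lemma, and the triviality of the Shilov ideal of the $\rC^*$-envelope).
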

\begin{proof}
Assume first that $\sigma\circ \eps$ admits an Exel-Loring approximation for every $*$-representation $\sigma$ of $\rC^*_e(\A)$. 
Let $\pi$ be an extremal representation of $\A$. By Theorem \ref{T:extreps}, there is a $*$-representation $\theta$ of $\rC^*_e(\A)$ extending $\pi\circ \eps^{-1}$. By assumption, $\pi=\theta\circ \eps$ admits an Exel--Loring approximation, so we conclude that $\A$ is RFD by virtue of Theorem \ref{T:ELOA}.

Conversely, assume that $\A$ is RFD. 
%Let $\sigma$ be a $*$-representation of $\rC^*_e(\A)$. The $*$-representation $\sigma$ can be decomposed as $\sigma'\oplus 0$ for some non-degenerate $*$-representation $\sigma'$ which is itself a direct sum of cyclic $*$-representations. Because $\A$ is separable, so is $\rC^*_e(\A)$. Thus, by virtue of Lemma \ref{L:directsum}, we see that we may assume without loss of generality that $\sigma$ is non-degenerate and acts on a separable Hilbert space. 
Let $\A^1$ denote the unitization of $\A$ (see \cite[Paragraph 2.1.11]{BLM2004}), and let $(\rC^*_e(\A^1), \iota)$ denote the $\rC^*$-envelope of $\A^1$. Combining \cite[Theorem 7.1]{arveson2008} and \cite[Proposition 4.4]{arveson2011}, we may find a separable Hilbert space $\fH_\pi$ along with a unital $*$-representation $\pi:\rC^*_e(\A^1)\to B(\fH_\pi)$ with the property that $\pi|_{\iota(\A^1)}$ is completely isometric and extremal. In turn, a well-known property of the $\rC^*$-envelope \cite[Proposition 2.2.4]{arveson1969} implies that $\pi$ is injective on $\rC^*_e(\A^1)$. 
Note also that $\pi\circ \iota$ is an extremal representation of $\A^1$ by \cite[Theorem 2.1.2]{arveson1969}. 

Observe next that
\[
\rC^*_e(\A^1)=\rC^*(\iota(\A^1))=\rC^*(\iota(\A))+\bC I.
\] 
Thus, as explained in \cite[page 15]{arveson1976inv}, we may find two $*$-representations $\pi_0,\pi_1$ of $\rC^*_e(\A^1)$ such that $\pi_0(\rC^*(\iota(\A)))$ is non-degenerate, $\rC^*(\iota(\A))\subset \ker \pi_1$ and $\pi=\pi_0\oplus \pi_1$.
Because $\pi$ is injective, we see that $\pi_0$ must be injective on $\rC^*(\iota(\A))$.
By \cite[Paragraph 4.3.4]{BLM2004}, we know that we may choose $\rC^*_e(\A)= \rC^*(\iota(\A))$ and $\eps=\iota|_{\A}$. Furthermore,  \cite[Proposition 2.5]{DS2018} implies that $\pi_0|_{\iota(\A)}=\pi|_{\iota(\A)}$ is extremal since $\pi|_{\iota(\A^1)}$ is.  It follows from Theorem \ref{T:ELOA} that $\pi_0|_{\iota(\A)}$ admits an Exel--Loring approximation, and thus so does $\pi_0\circ \iota|_\A=\pi_0\circ \eps$. An application of Theorem \ref{T:Voiculescu} now yields the desired statement.
\end{proof}

As mentioned above, in the case of RFD $\rC^*$-algebras, it is known that every representation admits an Exel--Loring approximation \cite{EL1992}. Theorems \ref{T:ELOA} and \ref{T:RFDcharC*env} make similar claims in the non-selfadjoint context, but only for certain special representations. It is then natural to wonder when can the existence of Exel--Loring approximations be shown for general representations of RFD operator algebras. As we show next, showing that all representations admit an Exel--Loring approximation is in fact equivalent to a condition which is a priori stronger, namely the residual finite-dimensionality of the maximal $\rC^*$-cover.

\begin{theorem}\label{T:ELC*max}
 Let $\A$ be an operator algebra and let $(\rC^*_{\max}(\A),\upsilon)$ denote its maximal $\rC^*$-cover. Then, the following statements are equivalent.
 \begin{enumerate}[{\rm (i)}]
 
 \item The $\rC^*$-algebra  $\rC^*_{\max}(\A)$ is RFD.
 
 \item Every representation of $\A$ admits an Exel--Loring approximation.
 
 \item Let $\fH$ be a Hilbert space and assume that $\rC^*_{\max}(\A) \subset B(\fH)$. Then, the representation $\upsilon:\A\to B(\fH)$ admits an Exel--Loring approximation.

\end{enumerate}
 \end{theorem}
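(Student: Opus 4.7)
The plan is to establish the cyclic chain (i) $\Rightarrow$ (ii) $\Rightarrow$ (iii) $\Rightarrow$ (i). The first two implications are essentially formal. For (i) $\Rightarrow$ (ii), let $\pi: \A \to B(\fH_\pi)$ be any representation; by the universal property of $\rC^*_{\max}(\A)$ there is a $*$-representation $\wh\pi: \rC^*_{\max}(\A) \to B(\fH_\pi)$ extending $\pi$ through $\upsilon$, and Lemma \ref{L:extEL} applied to the RFD $\rC^*$-cover $(\rC^*_{\max}(\A), \upsilon)$ produces the required Exel--Loring approximation. The implication (ii) $\Rightarrow$ (iii) is a tautology, since $\upsilon: \A \to B(\fH)$ is itself a representation of $\A$.

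For (iii) $\Rightarrow$ (i), the substantive direction, the idea is to promote the given Exel--Loring approximation $(\pi_\lambda)_{\lambda \in \Lambda}$ of $\upsilon: \A \to B(\fH)$ to a completely norming family of finite-dimensional $*$-representations of $\rC^*_{\max}(\A)$. By universality, each $\pi_\lambda$ lifts to a $*$-representation $\wh\pi_\lambda: \rC^*_{\max}(\A) \to B(\fH)$, and since $\wh\pi_\lambda(\rC^*_{\max}(\A))\fH = \rC^*(\pi_\lambda(\A))\fH$ is finite-dimensional, $\wh\pi_\lambda$ descends to a finite-dimensional $*$-representation $\sigma_\lambda$ of $\rC^*_{\max}(\A)$ on this reducing subspace. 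The critical step is to verify that $\wh\pi_\lambda(t) \to t$ in SOT for every $t \in \rC^*_{\max}(\A)$, where $t$ is viewed in $B(\fH)$ via the embedding in (iii). For $t$ a word $\upsilon(a_1)^{\eps_1}\cdots\upsilon(a_n)^{\eps_n}$ in $\upsilon(\A) \cup \upsilon(\A)^*$, this follows from the factor-wise SOT-convergence $\pi_\lambda(a_i)^{\eps_i} \to \upsilon(a_i)^{\eps_i}$ supplied by the Exel--Loring hypothesis, combined with joint SOT-continuity of multiplication on norm-bounded sets. Since $\rC^*_{\max}(\A)$ is the norm-closure of the linear span of such words, a standard $3\eps$-argument leveraging the uniform contractivity of the $\wh\pi_\lambda$ extends the SOT-convergence to all of $\rC^*_{\max}(\A)$.

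Once this propagation is established, the conclusion is quick: given $t \in \rC^*_{\max}(\A)$ and $\eps > 0$, pick a unit vector $\xi \in \fH$ with $\|t\xi\| \geq \|t\| - \eps$; the norm convergence $\wh\pi_\lambda(t)\xi \to t\xi$ then forces $\sup_\lambda \|\sigma_\lambda(t)\| = \sup_\lambda \|\wh\pi_\lambda(t)\| \geq \|t\| - \eps$. Running the same argument at the matrix level (the amplifications $\wh\pi_\lambda^{(d)}$ inherit entrywise SOT-convergence on $\bM_d(\rC^*_{\max}(\A))$) shows that $\bigoplus_\lambda \sigma_\lambda$ is a completely isometric direct sum of finite-dimensional $*$-representations, so $\rC^*_{\max}(\A)$ is RFD. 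The main obstacle, while technically mild, is precisely this SOT-propagation step from the generators $\upsilon(\A)$ to all of $\rC^*_{\max}(\A)$; everything else reduces to formal manipulation of universal properties together with standard SOT arguments.
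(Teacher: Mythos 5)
Your proposal is correct and follows essentially the same route as the paper: (i)$\Rightarrow$(ii) via the universal property together with Lemma \ref{L:extEL}, (ii)$\Rightarrow$(iii) trivially, and (iii)$\Rightarrow$(i) by lifting the Exel--Loring approximation to $*$-representations $\widehat{\pi}_\lambda$ of $\rC^*_{\max}(\A)$, compressing to the finite-dimensional spaces $\rC^*(\pi_\lambda(\A))\fH$, and using joint SOT-continuity of multiplication on bounded sets to propagate the convergence to words in $\upsilon(\A)\cup\upsilon(\A)^*$. The only (harmless) difference is that you carry the argument through to a full completely norming statement on all of $\rC^*_{\max}(\A)$, whereas the paper stops at producing, for each nonzero element of the dense span of such words, a single finite-dimensional $*$-representation that does not annihilate it.
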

\begin{proof}
(i) $\Rightarrow$ (ii): Assume that $\rC^*_{\max}(\A)$ is RFD. Let $\pi:\A\to B(\fH_\pi)$ be a representation. Then, there is a $*$-representation $\widehat\pi:\rC^*_{\max}(\A)\to B(\fH_\pi)$ such that $\widehat\pi\circ \upsilon=\pi$. It thus follows from Lemma \ref{L:extEL} that $\pi$ admits an Exel--Loring approximation.

(ii) $\Rightarrow$ (iii): This is trivial.

(iii) $\Rightarrow$ (i): Assume that $\upsilon$ has an Exel--Loring approximation 
\[
\pi_\lambda:\A\to B(\fH), \quad \lambda\in \Lambda.
\]
Note that linear combinations of words with letters in $\upsilon(\A)\cup \upsilon(\A)^*$ are clearly dense in $\rC^*_{\max}(\A)$, since $\rC^*_{\max}(\A)=\rC^*(\upsilon(\A))$. Therefore, to show that the latter $\rC^*$-algebra is RFD it suffices to fix such an element $s$ with $\|s\|=1$ along with $\eps>0$ and to construct a finite-dimensional $*$-representation $\rho$ of $\rC^*_{\max}(\A)$ such that $\|\rho(s)\|\geq 1-\eps$.

For each $\lambda\in \Lambda$, we obtain a $*$-representation $\widehat{\pi}_\lambda:\rC^*_{\max}(\A)\to B(\fH)$ with the property that $\widehat{\pi}_\lambda\circ \upsilon=\pi_\lambda$. 
Because multiplication is jointly continuous in the SOT on bounded sets, it is readily verified that $(\widehat{\pi}_\lambda(s))$ converges to $s$ in the SOT. In particular, there exists $\lambda\in \Lambda$ such that $\|\widehat{\pi}_\lambda(s)\|\geq 1-\eps$. Note now that the space $\fH_\lambda=\widehat{\pi}_\lambda(\rC^*_{\max}(\A))\fH$ is finite-dimensional since
\[
\widehat{\pi}_\lambda(\rC^*_{\max}(\A))\fH=\rC^*(\widehat{\pi}_\lambda(\upsilon(\A))\fH=\rC^*(\pi_\lambda(\A))\fH
\]
and the latter is finite-dimensional by construction.
Thus, we may define a finite-dimensional $*$-representation $\rho:\rC^*_{\max}(\A)\to B(\fH_\lambda)$ as
\[
\rho(t)=\widehat{\pi}_\lambda(t)|_{\fH_\lambda}, \quad t\in \rC^*_{\max}(\A).
\]
It follows that
\[
\widehat{\pi}_\lambda(t)=\rho(t)\oplus 0, \quad t\in \rC^*_{\max}(\A).
\]
so in particular $\rho$ has the required property.
\end{proof}

In light of Theorems \ref{T:ELOA} and \ref{T:ELC*max}, we see that the existence of finite-dimensional and Exel--Loring approximations is at the heart of Question \ref{Q:rfdC*max}.

To illustrate some of the difficulties associated with this question, we remark that the approximations from Theorem \ref{T:ELC*max} can be rather complicated, even for very transparent choices of representations. For instance, let $\A\subset B(\fH)$ be an operator algebra such that $\rC^*_{\max}(\A)$ is RFD. Let $\fK\subset \fH$ be an invariant subspace for $\A$. Consider the representation $\pi:\A\to B(\fK)$ defined as $\pi(a)=a|_{\fK}$ for every $a\in \A$. Then, by Theorem \ref{T:ELC*max}, we know that $\pi$ admits an Exel--Loring approximation. But this approximation does not necessarily arise as compressions of $\A$ to finite-dimensional subspaces of $\fK$, as the following concrete example shows. Recall that a closed subspace $\fS\subset \fH$ is said to be \emph{semi-invariant} for $\A$ if the map
\[
a\mapsto P_{\fS} a|_{\fS}, \quad a\in \A
\]
is multiplicative (see \cite{sarason1965}). A standard verification reveals that $\fS$ is semi-invariant if and only if $\fS=\fM\ominus \fN$ for some closed subspaces $\fM,\fN\subset \fH$ that are invariant for $\A$. 

\begin{example}\label{E:seminv}
Let $\bD\subset \bC$ denote the open unit disc. Let $\mathrm{A}(\bD)$ be the classical disc algebra acting on the Hardy space $H^2$. Let $H^\infty$ denote the Banach algebra of bounded holomorphic functions on $\bD$. In this example, we will freely use well-known facts about the structure of these objects; the reader may consult \cite{hoffman1988} for details. Let $\theta\in H^\infty$ be the singular inner function corresponding to the point mass at $1$, i.e.
 \[
  \theta(z)=\exp\left(\frac{z+1}{z-1} \right), \quad z\in \bD.
 \]
Let $\fK_\theta=H^2\ominus \theta H^2$ and let $\pi:\mathrm{A}(\bD) \to B(\fK_\theta)$  be the unital representation defined as
\[
\pi(a)=P_{\fK_\theta}a|_{\fK_\theta}, \quad a\in \rA(\bD).
\] 
Note that $\rC^*_{\max}(\mathrm{A}(\bD))$ is RFD by \cite[Example 3]{CR2018rfd}. Therefore, $\pi$ admits an Exel--Loring approximation by Theorem \ref{T:ELC*max}. Nevertheless, as we show next, $\fK_\theta$ contains no finite-dimensional subspace which is semi-invariant for $\pi(\rA(\bD))$.

To see this,  assume that $\fS\subset \fK_\theta$  is such a subspace. We may find closed subspaces $\fM,\fN\subset \fK_\theta$ which are invariant for $\pi(\mathrm{A}(\bD))$ such that $\fS=\fM\ominus \fN$. Then, $\fM\oplus \theta H^2$ and $\fN\oplus \theta H^2$ are also closed and invariant for $\rA(\bD)$.  By Beurling's theorem, there are distinct inner functions  $\phi$ and $\psi$ in $H^\infty$ such that
\[
\phi H^2=\fM\oplus \theta H^2, \quad \psi H^2=\fN\oplus \theta H^2.
\]
Because $\theta \in \phi H^2\cap \psi H^2$, we see that $\phi$ and $\psi$ must be singular inner functions with $\phi H^2\subset \psi H^2$ or $\psi H^2\subset \phi H^2$. The first possibility would imply $\fM \subset \fN$ and hence $\fS=\{0\}$ contrary to our assumption. Thus, we must have then that $\psi H^2\subset \phi H^2$ and
\[
 \fS=\fM\ominus \fN=\phi H^2\ominus \psi H^2.
\]
We infer that $\fS$ is infinite-dimensional (this follows for instance  from \cite[Exercise III.1.11]{bercovici1988}). 
 \qed
\end{example}

\section{Constructing Exel--Loring approximations via semigroup coactions}\label{S:coactions}

Motivated by Question \ref{Q:rfdC*max}, in this section we aim to identify concrete instances of RFD operator algebras $\A$ with the property that $\rC^*_{\max}(\A)$ is RFD. Our approach is predicated on Theorem \ref{T:ELC*max}, in that we aim to produce Exel--Loring approximations for arbitrary representations. The unifying theme throughout this section will be that of semigroups and their coactions. For more on semigroups and their C*-algebras, we recommend \cite[Chapter 5]{CELY17} and \cite{li2012, li2013, li2017}.

Let $P$ be a discrete semigroup. Assume that $P$ is \emph{cancellative}, so that two elements $p,q\in P$ satisfy $p=q$ whenever there is $r\in P$ with $rp=rq$ or $pr=qr$. For $p\in P$, define
\[
\R_p=\{r\in P:p=qr \text{ for some } q\in P\}
\]
and
\[
\L_p=\{q\in P:p=qr \text{ for some } r\in P\}.
\]
Let $r\in \R_p$. By definition, we find $q_r\in \L_p$ such that $p=q_rr$. Since $P$ is cancellative, this element $q_r$ is uniquely determined. Likewise, given $q\in \L_p$ there is a unique $r_q\in \R_p$ such that $p=qr_q$. We conclude that there is a bijection between $\R_p$ and $\L_p$. 
We say that the cancellative semigroup $P$ has the \emph{finite divisor property} (or FDP) if for every $p\in P$ the sets $\R_p$ and $\L_p$ are finite. Clearly the finite divisor property is inherited by subsemigroups. Semigroups with FDP include important examples such as free monoids, left-angled Artin monoids, Baumslag--Solitar monoids, braid monoids, Thompson's monoid and more.

Next, let $\lambda:P\to B(\ell^2(P))$ denote the left regular representation. For each $p\in P$, we denote by $e_p\in \ell^2(P)$ the characteristic function of $\{p\}$. Thus, $\{e_p\}_{p\in P}$ is an orthonormal basis for $\ell^2(P)$. Because $P$ is left cancellative, it is easily seen that $\lambda_p$ is an isometry for every $p\in P$. Furthermore, for $p\in P$ we see that
\begin{equation}\label{Eq:reg}
\lambda_p^*(e_r) = \begin{cases}
e_q &\mbox{if } r = pq \\
0 & \mbox{if } r \notin pP.
\end{cases}
\end{equation}
Let $\A_r(P)\subset B(\ell^2(P))$ denote the norm closed operator algebra generated by the image of $\lambda$, and let $\rC^*_r(P)$ denote the $\rC^*$-algebra generated by the image of $\lambda$. The developments in this section hinge on the following fact.

\begin{proposition}\label{P:ArP}
Let $P$ be a cancellative semigroup with FDP. For every finite subset $F\subset P$, there is a representation $\pi_F:\A_r(P)\to B(\ell^2(P))$ such that
\[
P\setminus \left( \cup_{p\in F}\cup_{r\in \R_p}\L_r\right)=   \lambda^{-1}(\ker \pi_F).
\] 
Moreover, the net $(\pi_F)_F$ is an Exel--Loring approximation for the identity representation of $\A_r(P)$. In particular,  $\A_r(P)$ is RFD.
\end{proposition}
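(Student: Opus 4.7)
The plan is to produce each $\pi_F$ as a compression of the identity representation $\A_r(P)\hookrightarrow B(\ell^2(P))$ to a carefully chosen finite-dimensional subspace. Concretely, set
\[
 E_F := \bigcup_{p_0\in F}\R_{p_0},
\]
which is finite by the FDP, and let $Q_F\in B(\ell^2(P))$ denote the orthogonal projection onto $\ell^2(E_F)$. Define $\pi_F(a)=Q_F a Q_F$ for $a\in\A_r(P)$. Since compressions are completely contractive, $\pi_F$ is automatically completely contractive, and manifestly $\rC^*(\pi_F(\A_r(P)))\ell^2(P)\subset\ell^2(E_F)$ is finite-dimensional. The three nontrivial tasks are: verifying that $\pi_F$ is multiplicative, identifying its kernel on the $\lambda_p$'s with $P\setminus K_F$, and establishing Exel--Loring convergence to the identity representation.

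The central algebraic property of $E_F$ is the following closure under extracting middle right-factors: whenever $q,rq\in E_F$ and $r=pp'$, one has $p'q\in E_F$. Indeed, $rq\in\R_{p_0}$ for some $p_0\in F$ forces $p_0=y(rq)=yp(p'q)$ for some $y\in P$, exhibiting $p'q$ as a right divisor of $p_0$, hence $p'q\in\R_{p_0}\subset E_F$. A direct computation on the basis $\{e_q\}_{q\in P}$ using \eqref{Eq:reg} then yields $\pi_F(\lambda_p\lambda_{p'})=\pi_F(\lambda_p)\pi_F(\lambda_{p'})$ for all $p,p'\in P$. Multiplicativity on all of $\A_r(P)$ follows from linearity, density of the algebra generated by $\{\lambda_p\}_{p\in P}$, and joint SOT continuity of multiplication on bounded sets.

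For the kernel, note that $\pi_F(\lambda_p)=Q_F\lambda_p Q_F=0$ is equivalent to $pE_F\cap E_F=\emptyset$. If there exist $q,q'\in E_F$ with $pq=q'$, then $p$ is a left divisor of $q'\in\R_{p_0}$ for some $p_0\in F$, so $p\in K_F$. Conversely, if $p\in K_F$, write $r=pt$ with $r\in\R_{p_0}$, $p_0\in F$; the identity $p_0=yr=ypt$ shows $t\in\R_{p_0}\subset E_F$, and then $pt=r\in E_F$ witnesses $pE_F\cap E_F\neq\emptyset$. This yields the required identity $P\setminus K_F=\lambda^{-1}(\ker\pi_F)$.

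Finally, for the Exel--Loring convergence of $(\pi_F)_F$ to the identity representation on $\ell^2(P)$, it suffices to check SOT convergence of $\pi_F(\lambda_p)$ and $\pi_F(\lambda_p)^*$ on each basis vector, then pass to all of $\A_r(P)$ by uniform complete contractivity and density. Given $p,q\in P$, any $F$ large enough that $q,pq\in E_F$ yields $\pi_F(\lambda_p)e_q=e_{pq}=\lambda_p e_q$; the adjoint computation uses that whenever $r\in\R_{p_0}$ and $r=ps$, then $s\in\R_{p_0}\subset E_F$ automatically. Since the identity representation of $\A_r(P)$ is completely isometric, restricting each $\pi_F$ to its essential subspace $\ell^2(E_F)$ yields finite-dimensional representations that jointly completely norm $\A_r(P)$, establishing RFD. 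The main obstacle is the simultaneous balancing of three constraints in the choice of $E_F$: the formula $\bigcup_{p_0\in F}\R_{p_0}$ is essentially forced by the need to match the kernel $K_F$ while yielding the right-factor closure that ensures multiplicativity, with the FDP used precisely to guarantee finiteness.
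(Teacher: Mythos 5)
Your proof is correct and follows essentially the same route as the paper: you compress to the same finite-dimensional subspace $\ell^2\bigl(\cup_{p\in F}\R_p\bigr)$, identify the kernel via left divisors, and get Exel--Loring convergence from the projections increasing to the identity. The only cosmetic difference is that the paper establishes multiplicativity by observing this subspace is coinvariant for $\A_r(P)$ (using \eqref{Eq:reg} and $\R_r\subset\R_p$ for $r\in\R_p$), while you verify it by a direct computation on generators via your ``middle right-factor'' closure property, which encodes the same fact.
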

\begin{proof}
For each $p\in P$, the subset $\R_p$ is finite by the assumption on the semigroup. Correspondingly, let $\X_p\subset \ell^2(P)$ denote the finite-dimensional subspace spanned by 
$
\{e_r:r\in \R_p\}.
$ Note now that if $r\in \R_p$, then $\R_r\subset \R_p$. Invoking \eqref{Eq:reg}, we find for $p,s\in P$ and $r\in \R_p$ that
\begin{equation}\label{Eq:coinv}
\lambda_s^* e_r \in \{0\}\cup \{e_t: t \in \R_r\}\subset \X_p
\end{equation}
and that
\begin{equation}\label{Eq:kernel}
\lambda_s^*e_r\neq 0 \quad \text{ if and only if } \quad s\in \L_r.
\end{equation}
We conclude from \eqref{Eq:coinv} that $\X_p$ is coinvariant for $\A_r(P)$ for every $p\in P$. 

Next, for every finite subset $F\subset P$, we let $\Y_F=\sum_{p\in F}\X_p$. Clearly, $\Y_F$ is a finite-dimensional subspace which is coinvariant for $\A_r(P)$. Let $Q_F\in B(\ell^2(P))$  denote the finite-rank orthogonal projection onto $\Y_F$. Thus, the map $\pi_F:\A_r(P)\to B(\ell^2(P))$ defined as
\[
\pi_F(a)=Q_FaQ_F, \quad a\in \A_r(P)
\]
is a representation. It readily follows from the fact that
\[
\cup_{p\in P}\R_p=P
\]
that the net $(\pi_F)$ is an Exel--Loring approximation for the identity representation of $\A_r(P)$, and that $\A_r(P)$ is RFD. Finally, \eqref{Eq:kernel} implies that $\pi_F(\lambda_s)\neq 0$ if and only if $s\in \cup_{p\in F}\cup_{r\in \R_p}\L_r$, which is equivalent to 
\[
P\setminus \left( \cup_{p\in F}\cup_{r\in \R_p}\L_r\right)=   \lambda^{-1}(\ker \pi_F).
\] 
\end{proof}

Coactions of discrete groups on general operator algebras were introduced in \cite{DKKLL+} as a generalization of coactions of discrete groups on C*-algebras \cite{Quigg1996}. The following is a semigroup variant of \cite[Definition 3.1]{DKKLL+}, adapted to our context. 

\begin{definition} \label{d:coaction}
Let $\A$ be an operator algebra, $P$ be a cancellative discrete semigroup, and $\delta : \A \rightarrow \A \otimes_{\min} \A_r(P)$ a completely isometric homomorphism. For each $p\in P$ denote by $\A^{\delta}_p$ the $p$-th \emph{spectral subspace} according to $\delta$, given by
\[
\A_p^{\delta} = \{ a \in \A \ | \ \delta(a) = a \otimes \lambda_p \}. 
\]
We say that $\delta$ is a \emph{coaction of $P$ on $\A$} if $\sum_{p\in P}\A^{\delta}_p$ is norm dense in $\A$.

\end{definition}
We remark here that in principle, the role of $\A_r(P)$ in the definition of a semigroup coaction can be fulfilled by a norm closed algebra generated by any contractive representation of $P$. However, we will not need this added generality for our purposes. A useful property of semigroup coactions that we require is the following.

\begin{lemma}\label{L:coactionchar}
Let $P$ be a cancellative discrete semigroup with the property that there is a character $\chi: \A_r(P)\to \bC$ such that $\chi(\lambda_p)=1$ for every $p\in P$. Let $\A$ be an operator algebra and let $\pi:\A\to B(\fH_\pi)$ be a representation. If $\delta$ is a coaction of $P$ on $\A$, then $(\pi\otimes \chi)\circ \delta $ is unitarily equivalent to $\pi$.
\end{lemma}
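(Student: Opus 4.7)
The plan is to show that $(\pi \otimes \chi) \circ \delta$ in fact agrees with $\pi$ on the dense subspace $\sum_{p\in P}\A_p^{\delta}$, once we incorporate the canonical unitary $\fH_\pi \cong \fH_\pi \otimes \bC$, and then extend to all of $\A$ by continuity.

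First I would observe that a character $\chi:\A_r(P)\to \bC$ is automatically completely contractive: every bounded linear functional on an operator space has completely bounded norm equal to its norm. Consequently, $\pi \otimes \chi$ is well-defined and completely contractive on the minimal tensor product $\A \otimes_{\min} \A_r(P)$, with values in $B(\fH_\pi) \otimes_{\min} \bC$. Let $U : \fH_\pi \to \fH_\pi \otimes \bC$ be the natural unitary $\xi \mapsto \xi \otimes 1$; under this identification, an operator of the form $T \otimes 1 \in B(\fH_\pi)\otimes \bC$ corresponds to $U T U^*$.

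Next, for any $p\in P$ and $a\in \A_p^{\delta}$, I would compute directly
\[
(\pi \otimes \chi)(\delta(a)) = (\pi \otimes \chi)(a \otimes \lambda_p) = \pi(a) \otimes \chi(\lambda_p) = \pi(a) \otimes 1 = U\, \pi(a)\, U^*,
\]
using the hypothesis $\chi(\lambda_p)=1$. By linearity, the equality $(\pi\otimes\chi)\circ\delta(a) = U\pi(a)U^*$ extends to all $a \in \sum_{p\in P}\A_p^{\delta}$. Since this algebraic sum is norm dense in $\A$ by the definition of a coaction (Definition \ref{d:coaction}), and since both $(\pi\otimes\chi)\circ\delta$ and $\Ad_U \circ \pi$ are norm-continuous homomorphisms $\A \to B(\fH_\pi \otimes \bC)$, the identity extends to all $a\in \A$. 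This exhibits the unitary equivalence implemented by $U$.

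There is no real obstacle here: the only subtle point is ensuring that $\pi \otimes \chi$ makes sense as a completely contractive map on the minimal tensor product, which is immediate from the fact that characters (being scalar-valued) are automatically completely contractive on any operator algebra. Everything else is a one-line computation on the spectral subspaces followed by a routine density argument.
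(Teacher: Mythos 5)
Your proof is correct and follows essentially the same route as the paper's: compute $(\pi\otimes\chi)\circ\delta(a)=\pi(a)\otimes 1$ on each spectral subspace $\A_p^{\delta}$ using $\chi(\lambda_p)=1$, then conclude by density of $\sum_{p\in P}\A_p^{\delta}$, with the unitary equivalence implemented by the canonical identification $\fH_\pi\cong\fH_\pi\otimes\bC$. Your added remarks on the complete contractivity of $\chi$ and the well-definedness of $\pi\otimes\chi$ on the minimal tensor product are correct details the paper leaves implicit.
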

\begin{proof}
For $p\in P$ and $a\in \A^{\delta}_p$, we find
\[
(\pi\otimes \chi)\circ \delta(a)=(\pi\otimes \chi)(a\otimes \lambda_p)=\pi(a)\otimes 1.
\]
The density of the sum of the spectral subspaces thus implies that $(\pi\otimes \chi)\circ \delta $ is unitarily equivalent to $\pi$.
\end{proof}

Next, let $\F$ denote the directed set of finite subsets of $P$. Proposition \ref{P:ArP} yields the existence of a net $(\pi_F)_{F\in \F}$ which is an Exel--Loring approximation for the identity representation of $\A_r(P)$. For each $F\in \F$, we let  $\Q_F$ be the quotient of $\A$ by the norm closure of the ideal generated by $\sum_{p \in \lambda^{-1}(\ker \pi_F)} \A_p^{\delta}$. 

\begin{definition}
Let $\A$ be an operator algebra, $P$ be a cancellative discrete semigroup with FDP, and $\delta$ a coaction of $P$ on $\A$. We say that $\delta$ is \emph{residually finite--dimensional} (RFD) if the $\rC^*$-algebra $\rC^*_{\max}(\Q_F)$ is RFD for each $F\in \F$.
\end{definition}

We now take our first crucial step towards answering Question \ref{Q:rfdC*max} for a class of operator algebras with semigroup coactions.
 
\begin{proposition}\label{P:ELapproxcoactions}
 Let $\A$ be an operator algebra and let $P$ be a cancellative semigroup with FDP.  Assume that there exists an RFD coaction $\delta$ of $P$ on $\A$. Let $\pi:\A\to B(\fH_\pi)$ be a representation. Then, the representation $(\pi\otimes \id)\circ \delta$ admits an Exel--Loring approximation.
 \end{proposition}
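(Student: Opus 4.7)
The plan is to produce the Exel--Loring approximation in two layers. The outer layer comes from Proposition~\ref{P:ArP}: the representations $(\pi_F)_{F\in\F}$ form an Exel--Loring approximation of $\mathrm{id}_{\A_r(P)}$, and I would use them to cut the $P$-side of the coaction down to finite-dimensional data. The inner layer comes from Theorem~\ref{T:ELC*max} applied to the quotients $\Q_F$, whose maximal $\rC^*$-covers are RFD by hypothesis, and yields finite-dimensional refinements on the $\A$-side.

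Concretely, for each $F\in\F$ I would set $\sigma_F := (\pi\otimes\pi_F)\circ\delta$, a representation of $\A$ on $\fH_\pi\otimes\ell^2(P)$. Because $\pi_F(x)\to x$ and $\pi_F(x)^*\to x^*$ in SOT for every $x\in\A_r(P)$, a routine density-plus-contractivity argument (elementary tensors are dense in $B(\fH_\pi)\otimes_{\min}\A_r(P)$, and $\id\otimes\pi_F$ is completely contractive) shows that $\sigma_F(a)\to(\pi\otimes\id)(\delta(a))$ and $\sigma_F(a)^*\to(\pi\otimes\id)(\delta(a))^*$ in SOT for every $a\in\A$. Next, I would verify that $\sigma_F$ factors through $\Q_F$: for $a\in\A_p^\delta$ with $p\in\lambda^{-1}(\ker\pi_F)$, one computes $\sigma_F(a)=\pi(a)\otimes\pi_F(\lambda_p)=0$, and since $\ker\sigma_F$ is a norm-closed two-sided ideal of $\A$, it must contain the closed ideal generated by $\sum_{p\in\lambda^{-1}(\ker\pi_F)}\A_p^\delta$. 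Thus $\sigma_F$ induces a representation $\wt\sigma_F:\Q_F\to B(\fH_\pi\otimes\ell^2(P))$ with $\wt\sigma_F\circ q_F=\sigma_F$, where $q_F$ is the quotient map.

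The RFD coaction hypothesis then allows me to invoke Theorem~\ref{T:ELC*max} to obtain an Exel--Loring approximation $(\rho_{F,\mu})_{\mu\in M_F}$ of $\wt\sigma_F$. Setting $\tau_{F,\mu}:=\rho_{F,\mu}\circ q_F$ produces representations of $\A$ on the same Hilbert space, and the finite-dimensional range condition is automatically inherited via $\rC^*(\tau_{F,\mu}(\A))(\fH_\pi\otimes\ell^2(P))\subset \rC^*(\rho_{F,\mu}(\Q_F))(\fH_\pi\otimes\ell^2(P))$, so $(\tau_{F,\mu})_{\mu}$ is an Exel--Loring approximation of $\sigma_F$ for each $F$. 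To amalgamate the two layers, I would use a standard iterated-limit construction indexed by triples $(S,T,\varepsilon)$ with $S\subset\A$ finite, $T\subset\fH_\pi\otimes\ell^2(P)$ finite, and $\varepsilon>0$: first choose $F$ so that $\sigma_F$ approximates $(\pi\otimes\id)\circ\delta$ to within $\varepsilon/2$ on $S\times T$ (in both $*$-directions), then choose $\mu\in M_F$ so that $\tau_{F,\mu}$ approximates $\sigma_F$ to within $\varepsilon/2$. The resulting net is the required Exel--Loring approximation.

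The main obstacle is the factorization step: one must pass from the manifest vanishing of $\sigma_F$ on the spectral subspaces $\A_p^\delta$ with $p\in\lambda^{-1}(\ker\pi_F)$ to vanishing on the full closed ideal they generate. This is precisely what the definition of an RFD coaction is engineered to capture, and it reduces to the general fact that the kernel of a continuous algebra homomorphism is a closed two-sided ideal. Everything else in the argument is routine topological net surgery.
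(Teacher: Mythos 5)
Your proposal is correct and follows essentially the same route as the paper's proof: factor $(\pi\otimes\pi_F)\circ\delta$ through the quotient $\Q_F$, invoke Theorem~\ref{T:ELC*max} via the RFD coaction hypothesis to approximate the induced representation, and amalgamate the two layers of approximation by an iterated-limit/density argument. The only cosmetic difference is that you verify the outer-layer SOT convergence on all of $\A\otimes_{\min}\A_r(P)$ by density of elementary tensors, whereas the paper checks it on the spectral subspaces $\A_p^\delta$ and uses their density in $\A$; these are the same argument in substance.
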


 \begin{proof}
 Let $F\subset P$ be a finite subset, and let $q_F :\A\to \Q_F$ denote the natural quotient map. If $p\in \lambda^{-1}(\ker \pi_F)$ and $a\in \A^{\delta}_p$, then
\[
(\pi\otimes\pi_F)\circ \delta(a)=\pi(a)\otimes \pi_F(\lambda_p)=0.
\]
We conclude that
\[
\sum_{p\in \lambda^{-1}(\ker \pi_F)}\A^{\delta}_p \subset \ker (\pi\otimes \pi_F)\circ \delta. 
\]
Thus, there is a representation $\rho_F :\Q_F \to B(\fH_\pi\otimes \ell^2(P)))$ with the property that 
\[
(\pi\otimes \pi_F)\circ \delta=\rho_F \circ q_F.
\]
Since the coaction $\delta$ is assumed to be RFD, we see that $\rC^*_{\max}(\Q_F)$ is RFD, so by Theorem \ref{T:ELC*max} the representation $\rho_F$ admits an Exel--Loring approximation 
\[
\rho_{F,\mu}:\Q_F \to B(\fH_\pi\otimes \ell^2(P)), \quad  \mu\in \Omega_F. 
\]
In particular,
\[
\rC^*((\rho_{F,\mu}\circ q_F)(\A))(\fH_\pi \otimes\ell^2(P))=\rC^*(\rho_{F,\mu}(\Q_F))(\fH_\pi\otimes\ell^2(P))
\]
is finite-dimensional for every $\mu\in \Omega_F$. 

For every $p\in P$ and every $a\in \A^{\delta}_p$, given $\xi\in \fH_\pi$ and $\eta\in \ell^2(P)$ we obtain
\begin{align*}
\lim_F \lim_\mu (\rho_{F,\mu}\circ q_F)(a)(\xi\otimes \eta)&=\lim_F(\rho_F\circ q_F) (a)(\xi\otimes \eta)\\
&=\lim_F((\pi\otimes \pi_F)\circ \delta) (a)(\xi\otimes \eta)\\
&=\lim_F \pi(a)\xi\otimes \pi_F (\lambda_p)\eta\\
&=\pi(a)\xi\otimes \lambda_p \eta\\
&=(\pi(a)\otimes \lambda_p)(\xi\otimes \eta)\\
&=((\pi\otimes \id)\circ \delta)(a)(\xi\otimes\eta)
\end{align*}
and likewise
\[
\lim_F \lim_\mu (\rho_{F,\mu}\circ q_F)(a)^*(\xi\otimes \eta)=((\pi\otimes \id)\circ \delta)(a)^*(\xi\otimes\eta).
\]
A standard density argument then reveals that there is a net in 
\[
\{\rho_{F,\mu}\circ q_F:F\subset P \text{ finite}, \mu\in \Omega_F\}
\]
which forms an Exel--Loring approximation for $(\pi\otimes \id)\circ \delta$.
 \end{proof}

The following is the connection between coactions of semigroups on RFD operator algebras, and RFD maximal C*-covers.

\begin{theorem} \label{T:coactionC*maxRFD}
Let $P$ a countable cancellative semigroup with FDP such that there is a character $\chi:\A_r(P)\to \bC$ with $\chi(\lambda_p)=1$ for every $p\in P$. Let $\A$ be a separable operator algebra that admits an RFD coaction of $P$. Then, $\rC^*_{\max}(\A)$ is RFD.
\end{theorem}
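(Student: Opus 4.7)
My plan combines Theorem \ref{T:ELC*max}, Theorem \ref{T:Voiculescu}, Proposition \ref{P:ELapproxcoactions}, and Lemma \ref{L:coactionchar}. By Theorem \ref{T:ELC*max}, it suffices to show that every representation of $\A$ admits an Exel--Loring approximation, and Theorem \ref{T:Voiculescu} reduces this to producing a single injective $*$-representation of $\rC^*_{\max}(\A)$ whose restriction to $\upsilon(\A)$ admits such an approximation: every representation of $\A$ extends uniquely to a $*$-representation of $\rC^*_{\max}(\A)$ by the universal property, and Theorem \ref{T:Voiculescu} propagates the approximation across all such $*$-representations.

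To build the candidate, I would fix a faithful $*$-representation $\pi_0:\rC^*_{\max}(\A)\to B(\fH_0)$ and set $\pi_1=\pi_0\circ\upsilon$. Proposition \ref{P:ELapproxcoactions} applied to $\pi_1$ then yields an Exel--Loring approximation of $(\pi_1\otimes\id)\circ\delta:\A\to B(\fH_0\otimes\ell^2(P))$, and by universality this representation extends to a $*$-homomorphism $\Pi:\rC^*_{\max}(\A)\to B(\fH_0\otimes\ell^2(P))$ whose restriction to $\upsilon(\A)$ inherits the Exel--Loring approximation.

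The crucial step is to arrange that (a slight modification of) $\Pi$ is injective, and this is where the character $\chi$ is used decisively. By the universal property, $\chi$ extends to a $*$-character $\widehat\chi:\rC^*_{\max}(\A_r(P))\to\bC$, and $\delta$ extends to a $*$-homomorphism $\widetilde\delta:\rC^*_{\max}(\A)\to\rC^*_{\max}(\A)\otimes_{\min}\rC^*_{\max}(\A_r(P))$. For $a\in\A_p^{\delta}$, the computation $(\id\otimes\widehat\chi)(\widetilde\delta(a))=(\id\otimes\chi)(a\otimes\lambda_p)=a$, together with the density of the spectral subspaces in $\A$, shows that $(\id\otimes\widehat\chi)\circ\widetilde\delta$ agrees with the inclusion on $\upsilon(\A)$; the uniqueness of $*$-homomorphism extensions from operator algebra representations to $\rC^*_{\max}(\A)$ then forces $(\id\otimes\widehat\chi)\circ\widetilde\delta=\id_{\rC^*_{\max}(\A)}$, which in particular yields injectivity of $\widetilde\delta$. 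Selecting a faithful $*$-representation $\tau$ of $\rC^*_{\max}(\A_r(P))$ and setting $\Pi'=(\pi_0\otimes\tau)\circ\widetilde\delta$ produces an injective $*$-representation of $\rC^*_{\max}(\A)$. The Exel--Loring approximation of $\Pi'|_{\upsilon(\A)}$ can then be assembled by choosing $\tau$ as a direct sum containing the left regular representation (so that Proposition \ref{P:ELapproxcoactions} applies to one summand directly), handling the other summand via an auxiliary application of the same strategy to the algebra $\A_r(P)$ itself under the comultiplication coaction $\Delta(\lambda_p)=\lambda_p\otimes\lambda_p$, whose quotients $\Q_F$ are finite-dimensional thanks to the finite divisor property and hence give an RFD coaction without further input.

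With $\Pi'$ in hand, Theorem \ref{T:Voiculescu} yields Exel--Loring approximations for the restriction to $\upsilon(\A)$ of every $*$-representation of $\rC^*_{\max}(\A)$, and therefore for every representation of $\A$ by universality; Theorem \ref{T:ELC*max} then concludes that $\rC^*_{\max}(\A)$ is RFD. I expect the main obstacle to be the bridging step: reconciling the injectivity argument, which naturally operates at the level of $\rC^*_{\max}(\A_r(P))$ (where $\chi$ extends to a $*$-character by universality), with the approximations from Proposition \ref{P:ELapproxcoactions}, which live on $\fH_0\otimes\ell^2(P)$ and are tied to the left regular representation via $\rC^*_r(P)$.
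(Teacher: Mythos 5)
Your overall architecture matches the paper's (reduce via Theorem \ref{T:ELC*max}, produce one injective $*$-representation of $\rC^*_{\max}(\A)$ whose restriction to $\upsilon(\A)$ has an Exel--Loring approximation via Proposition \ref{P:ELapproxcoactions}, then transfer with Theorem \ref{T:Voiculescu}), but the injectivity step is carried out at the wrong level, and this creates a gap you flag but do not close. Your character extension and slice argument give injectivity of $\widetilde\delta:\rC^*_{\max}(\A)\to\rC^*_{\max}(\A)\otimes_{\min}\rC^*_{\max}(\A_r(P))$, whereas Proposition \ref{P:ELapproxcoactions} only controls the representation $((\pi_0\circ\upsilon)\otimes\id)\circ\delta$, whose second leg is the \emph{left regular} representation, i.e.\ the composition of $\widetilde\delta$ with $\id\otimes q$ for the canonical surjection $q:\rC^*_{\max}(\A_r(P))\to\rC^*_r(P)$; injectivity of $\widetilde\delta$ does not survive this composition. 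Your patch --- take $\tau=\tau_1\oplus\tau_2$ with $\tau_1$ the left regular piece and $\tau_2$ faithful on $\rC^*_{\max}(\A_r(P))$ --- makes $\Pi'$ injective but forces you to approximate the summand $a\mapsto \pi_1(a)\otimes\tau_2(\upsilon_P(\lambda_p))$ ($a\in\A_p^{\delta}$), and nothing in the paper's toolkit covers it: the mechanism of Proposition \ref{P:ELapproxcoactions} factors the representation through the quotients $\Q_F$ precisely because $\pi_F(\lambda_p)=0$ off the finite set $\cup_{p\in F}\cup_{r\in\R_p}\L_r$, while an arbitrary representation of $\A_r(P)$ (for instance the character $\chi$ itself, which kills no $\lambda_p$) admits no such factorization, and the first leg $\pi_1$ is exactly the arbitrary representation you are trying to approximate. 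Moreover, the suggested ``auxiliary application of the same strategy to $\A_r(P)$ under the comultiplication'' is circular: it raises the identical second-summand problem for $\A_r(P)$, and even granting its conclusion (that $\rC^*_{\max}(\A_r(P))$ is RFD) this does not produce an Exel--Loring approximation for the mixed summand above.

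The paper avoids all of this by never leaving the reduced algebra on the second leg: it extends $\delta$ to $\widehat\delta:\rC^*_{\max}(\A)\to\rC^*_{\max}(\A)\otimes_{\min}\rC^*_r(P)$, extends $\chi$ to a \emph{state} $\widehat\chi$ on $\rC^*_r(P)$, and uses the multiplicative domain (each $\lambda_p$ is an isometry with $\chi(\lambda_p)=1$, so $\widehat\chi(\lambda_p^*\lambda_p)=1=|\widehat\chi(\lambda_p)|^2$) to conclude $\widehat\chi$ is a character on $\rC^*_r(P)$. Then $(\rho\otimes\widehat\chi)\circ\widehat\delta$ is unitarily equivalent to the injective $\rho$, so $\widehat\delta$ and hence $(\rho\otimes\id)\circ\widehat\delta$ are injective, and the restriction of the latter to $\upsilon(\A)$ is exactly $((\rho\circ\upsilon)\otimes\id)\circ\delta$, which Proposition \ref{P:ELapproxcoactions} approximates; Theorem \ref{T:Voiculescu} then finishes. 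In short, the hypothesis on $\chi$ is used to promote the character to the \emph{reduced} $\rC^*$-algebra, which is the step your proposal is missing; without it your bridging step fails.
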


\begin{proof}
Recall that $\upsilon:\A\to \rC^*_{\max}(\A)$ is the completely isometric homomorphism such that $(\rC^*_{\max}(\A),\upsilon)$ is the maximal $\rC^*$-cover of $\A$.  By Theorem \ref{T:ELC*max}, it suffices to fix an injective $*$-representation $\rho:\rC^*_{\max}(\A)\to B(\fH)$ and to show that the representation $\rho\circ\upsilon:\A\to B(\fH)$ admits an Exel--Loring approximation.  

To see this, let $\delta$ be an RFD coaction of $P$ on $\A$. By injectivity of the minimal tensor product, the map
\[
\gamma=\upsilon\otimes \id:\A\otimes_{\min}\A_r(P)\to \rC^*_{\max}(\A)\otimes_{\min} \rC^*_r(P)
\]
is a completely isometric homomorphism. Let $\widehat\delta:\rC^*_{\max}(\A)\to \rC^*_{\max}(\A)\otimes_{\min} \rC^*_r(P)$ be the $*$-representation such that $\widehat\delta\circ \upsilon=\gamma \circ \delta$. 

Now, it follows from  Lemma \ref{L:coactionchar} that $(\upsilon\otimes \chi)\circ\delta$ is unitarily equivalent to $\upsilon$. Let $\widehat\chi: \rC^*_r(P)\to \bC$ be a state extending $\chi$. Since $\lambda_p$ is an isometry  and $\chi(\lambda_p)=1$, it follows that $\lambda_p$ lies in the multiplicative domain of $\chi$ for every $p\in P$. Thus, $\widehat\chi$ is a character,  which in turn implies that $(\rho\otimes \widehat\chi)\circ \widehat\delta$ is unitarily equivalent to $\rho$. In particular, because $\rho$ is injective this forces $\widehat\delta$ to be an injective $*$-representation and thus so is $(\rho\otimes \id)\circ \widehat\delta$.
Proposition \ref{P:ELapproxcoactions} now yields an Exel--Loring approximation for $((\rho\circ \upsilon)\otimes \id)\circ \delta$. But we find
\begin{align*}
((\rho\circ \upsilon)\otimes \id)\circ \delta&=(\rho\otimes \id)\circ \widehat\delta\circ \upsilon
\end{align*}
so that $(\rho\otimes \id)\circ \widehat\delta|_{\upsilon(\A)}$ admits an Exel--Loring approximation. Since $P$ is countable, the algebra $\A_r(P)$ is separable and we may invoke Theorem \ref{T:Voiculescu} to conclude that $\rho|_{\upsilon(\A)}$ admits an Exel--Loring approximation, and hence the same is true for $\rho\circ \upsilon$.
 \end{proof}

The astute reader may notice that the condition that $\A_r(P)$ admit an appropriate character in the previous theorem is stronger than what is actually needed in the proof. Indeed, therein we only need to know that the coaction $\delta$ admits an extension to an injective $*$-representation $\widehat\delta: \rC^*_{\max}(\A)\to \rC^*_{\max}(\A)\otimes_{\min} \rC^*_r(P)$. Unfortunately, at present we do not know how to verify this condition without assuming the existence of a character, or when the coaction is trivial.

\section{Algebras of semigroups and algebras of functions}\label{SS:semigroup}

In this section, we exhibit several classes examples of operator algebras to which Theorem \ref{T:coactionC*maxRFD} can be applied. These include algebras arising from semigroups and algebras of functions. 

\subsection{Semigroup operator algebras}
We recall some background material on semigroups. Let $P$ be a cancellative semigroup. By a right ideal of $P$ we mean a subset $X \subset P$ which is closed under right multiplication. Given a subset $X\subset P$ and an element $p\in P$, we define
 \[
 pX = \{ px : x\in X\} \qand p^{-1}X = \{x : px \in X\}.
 \]
Further, we denote by $\J_P$ the smallest collection of right ideals that contains $\varnothing$ and $P$, and such that $X\in \J_P$ implies $pX,p^{-1}X\in \J_P$.

\begin{hypothesis} 
Henceforth, we will assume that a semigroup $P$ is \emph{unital} and embeds as a unital subsemigroup of a group $G$.
\end{hypothesis}

We emphasize here that for $p\in P$ and $X\subset P$, $p^{-1}X$ is always understood to be a subset of $P$. In other words, $p^{-1}X$ is the intersection of $P$ with the set
\[
\{p^{-1}x:x\in X\}\subset G.
\]
%
%Note that when defining $pX$ and $p^{-1}X$ we require that the resulting sets are always subsets of $P$. Thus, if $P$ is a unital subsemigroup of a group $G$, and $X \subset P$ is some subset, then with the meaning of multiplication $\cdot$ in $G$, the sets $pX$ and $p^{-1}X$ previously defined without the presence of $G$ are given by
%$$
%pX = (p\cdot X) \cap P, \ \ \text{and} \ \ p^{-1}X = (p^{-1}\cdot X) \cap P
%$$

The \emph{full semigroup $\rC^*$-algebra}, denoted by $\rC_s^*(P)$,  is the universal $\rC^*$-algebra generated by isometries $\{v_p:p\in P\}$ and self-adjoint projections $\{e_X:X\in \J_P\}$ such that
\begin{enumerate}
\item $e_{\varnothing} = 0$;

\item $v_p v_q = v_{pq}$ for $p,q\in P$;

\item whenever $p_1,q_1,...,p_n, q_n \in P$ satisfy $p_1^{-1} q_1\ldots p_n^{-1}q_n = e$ in $G$, then
$$
v_{p_1}^*v_{q_1} \ldots v_{p_n}^*v_{q_n} = e_{q_n^{-1}p_n\ldots q_1^{-1}p_1 P}.
$$
\end{enumerate}
Furthermore, we let $\fD_s(P)\subset \rC^*_s(P)$ denote the $\rC^*$-subalgebra generated by $\{e_X:X\in \J_P\}$.

The defining relations above can be realized inside of $\rC^*_r(P)$, as we explain next. For added clarity, thoughout this section we denote the left regular representation of $P$ by $\lambda^P$. Given a subset $X\subset P$, we let $1^P_X\in B(\ell^2(P))$ denote the operator of multiplication by the indicator function of $X$. Clearly, this is a self-adjoint projection. We denote by $\fD_r(P)$ the C*-subalgebra of $\rC^*_r(P)$ generated
\[
\{1^P_X:X\in \J_P\}.
\]
Now, it follows from \cite[Lemma 3.1]{li2012} that the families $\{\lambda^P_p:p\in P\}$ and $\{1^P_X:X\in \J_P\}$ inside $B(\ell^2(P))$ satisfy the defining relations (i)--(iii) above. Hence, by universality we obtain a surjective $*$-representation $\lambda^P_* : \rC_s^*(P) \rightarrow \rC^*_r(P)$ such that 
\[
\lambda^P_*(p) = \lambda^P_p, \quad p\in P
\]
 and 
 \[
 \lambda^P_*(e_X) = 1_X, \quad X\in \J_P.
 \] 
By \cite[Lemma 3.11]{li2012} there is a faithful conditional expectation $E^P_r : \rC^*_r(P) \rightarrow \fD_r(P)$. 
 
A cancellative semigroup $P$ 
%It follows from this definition that
%$$
%\J_P = \{ \ q_1^{-1}p_1 ... q_n^{-1}p_n P \ | \ p_i,q_i\in P, \ n\in \bN \ \} \cup \{\varnothing\},
%$$
%and by work in \cite[Subsection 2.1]{li2012} we get that $\J_P$ is closed under finite intersections.
is said to be \emph{independent} if whenever $X\in \J_P$ can be written as
\[
X= \bigcup_{i=1}^n X_i
\]
for some $X_1,\ldots,X_n\in \J_P$, then necessarily we have $X=X_j$ for some $1\leq j\leq n$. This property will play a crucial role below. Fortunately, many semigroups from the literature automatically satisfy independence. For instance, all right LCM semigroups are independent by \cite[Lemmas 6.31 and 6.32]{li2017}. 
%One of the advantages of knowing that $P$ is independent is that by \cite[Corollary 2.7]{li2013} we get that $D_r$ is the universal commutative C*-algebra generated by projections $e_X$ for $X\in \J_P$ satisfying $e_{\emptyset} = 0$ and $e_{X_1 \cap X_2} = e_{X_1} e_{X_2}$. 

Assume now that $P$ embeds into a group and is independent. By \cite[Lemma 3.13]{li2012} there is a conditional expectation $E^P_s : \rC_s^*(P) \to \fD_s(P)$ so that $\lambda^P_* \circ E^P_s = E^P_r \circ \lambda^P_*$ and
\begin{equation}\label{Eq:CE-kern}
\ker(\lambda^P_*) = \{  a \in \rC^*_s(P) : E^P_s(a^*a) = 0 \}.
\end{equation}
This fact will be used in the proof of the next result, which shows, roughly speaking, that certain semigroup homomorphisms induce $*$-representations on the reduced $\rC^*$-algebras.

\begin{proposition} \label{p:injectivity}
Let $P$ and $Q$ be semigroups embedded into some groups $G$ and $H$ respectively. Suppose that $P$ is independent and that $\psi : G \rightarrow H$ is an injective homomorphism such that $\psi(P) \subset Q$ and 
\begin{equation}\label{Eq:missingpsi}
\psi(p^{-1} X) = \psi(p)^{-1} \psi(X) \quad \text{for every } p\in P, X\in \J_P.
\end{equation}
Then, there is a $*$-representation $\Psi_r : \rC^*_r(P) \rightarrow \rC^*_r(Q)$ such that
\[
\Psi_r(\lambda^P_p)=\lambda^Q_{\psi(p)}, \quad p\in P.
\]
\end{proposition}

\begin{proof}
For each $p\in P$, define the isometry $w_p=\lambda^Q_{\psi(p)}\in B(\ell^2(Q))$, and for each $X\in \J_P$, define the projection $f_X=1^Q_{\psi(X)Q}\in B(\ell^2(Q))$. 
It is readily seen that the collections $\{w_p:p\in P\}$ and $\{f_X:X\in \J_P\}$  satisfy properties (i) and (ii) in the definition of $\rC^*_s(P)$. Next, let $p_1,q_1,...,p_n,q_n \in P$ be elements satisfying $p_1^{-1}q_1...p_n^{-1}q_n = e$ in $G$. Put $Y=q_n^{-1}p_n ... q_1^{-1}p_q P$. We claim that 
\[
w_{p_1}^*w_{q_1} ... w_{p_n}^*w_{q_n}=f_Y.
\]
By \eqref{Eq:missingpsi}, we get that
$$
\psi(Y) = \psi(q_n)^{-1}\psi(p_n) ... \psi(q_1)^{-1}\psi(p_1)\psi(P).
$$
On the other hand, using that $\psi(e)=e$ and $\psi(P)\subset Q$ we see that $\psi(P)Q=Q$. Therefore,
$$
\psi(Y)Q = \psi(q_n)^{-1}\psi(p_n) ... \psi(q_1)^{-1}\psi(p_1)Q.
$$
Observe now that the collections $\{\lambda_q^Q \in B(\ell^2(Q)):q\in Q\}$ and $\{1^Q_X:X\in \J_Q\}$ satisfy (iii), whence
\begin{align*}
w_{p_1}^*w_{q_1} \ldots w_{p_n}^*w_{q_n} &= \lambda^{Q*}_{\psi(p_1)} \lambda^{Q}_{\psi(q_1)} \ldots  \lambda^{Q*}_{\psi(p_n)} \lambda^{Q}_{\psi(q_n)}\\
&=  1^Q_{\psi(q_n)^{-1}\psi(p_n) ... \psi(q_1)^{-1}\psi(p_1)Q} \\
&= 1^Q_{\psi(Y)Q}= f_Y.
\end{align*}
This establishes the claim, and we conclude that the collection $\{w_p:p\in P\}$ and $\{f_X:X\in \J_P\}$ also satisfy (iii) in the definition of $\rC^*_s(P)$.
Consequently, there is a $*$-homomorphism $\Psi : \rC^*_s(P) \rightarrow \rC^*_r(Q)$ such that
\[
\Psi(v_p)=w_p, \quad p\in P
\]
and
\[
\Psi(e_X)=f_X, \quad X\in \J_P.
\]
%Next, we apply \cite[Corollary 3.4]{li2012} to find that $\lambda_*^P|_{\fD}:\fD_s(P)\to \fD_r(P)$ is a $*$-isomorphism. 
Furthermore, given $p_1,\ldots,p_m,q_1,\ldots,q_m\in P$, we compute using \cite[Lemmas 3.1 and 3.13]{li2012} that
\begin{align*}
&E_r^Q \circ \Psi( v_{p_1}^* v_{q_1}\cdots v_{p_m}^* v_{q_m})=E_r^Q( \lambda^{Q*}_{\psi(p_1)} \lambda^Q_{\psi(q_1)}\cdots \lambda^{Q*}_{\psi(p_m)} \lambda^Q_{\psi(q_m)})\\
&=\begin{cases}
1^Q_{\psi(q_m^{-1})\psi(p_m)\cdots \psi(q_1^{-1})\psi(p_1)Q} & \text{ if } \psi( p_1^{-1}q_1\cdots p_{m}^{-1}q_m)=e \text{ in } H,\\
0 & \text{ otherwise}
\end{cases}\\
&=\begin{cases}
1^Q_{\psi(q_m^{-1})\psi(p_m)\cdots \psi(q_1^{-1})\psi(p_1)Q} & \text{ if } p_1^{-1}q_1\cdots p_{m}^{-1}q_m=e \text{ in } G,\\
0 & \text{ otherwise}
\end{cases}
\end{align*}
where the last equality follows since $\psi$ is injective. 
Using that $\psi(P)Q=Q$ again, \cite[Lemma 3.13]{li2012} implies that 
\[
E_r^Q \circ \Psi( v_{p_1}^* v_{q_1}\cdots v_{p_m}^* v_{q_m})=\Psi\circ E_s^P (v_{p_1}^* v_{q_1}\cdots v_{p_m}^* v_{q_m}).
\]
We conclude from this that 
\[
E_r^Q\circ \Psi=\Psi\circ E_s^P.
\]
Combining this equality with \eqref{Eq:CE-kern} and recalling that $E_r^Q$ is faithful, we infer that $\ker \lambda_*^P\subset \ker \Psi$. Since $\rC^*_r(P)\cong \rC^*_s(P)/\ker \lambda_*^P$, we obtain a $*$-representation $\Psi_r:\rC^*_r(P)\to \rC^*_r(Q)$ with the desired property.
\end{proof}

Regarding \eqref{Eq:missingpsi}, we remark that it is always true, for $p\in P$ and $X\subset P$, that
\begin{equation}\label{Eq:psi}
\psi(p^{-1}X)=(\psi(p)^{-1}\psi(X))\cap \psi(P)\subset \psi(p)^{-1}\psi(X)
\end{equation}
if $\psi$ is an injective homomorphism with $\psi(P)\subset Q$.  In a previous version of the paper the assumption that \eqref{Eq:missingpsi} holds in Proposition \ref{p:injectivity} was missing. Without this assumption, a counterexample to Proposition \ref{p:injectivity} can be produced as follows. We thank an anonymous referee bringing this issue to our attention.

\begin{example}\label{E:issue}
Let $P=\mathbb{F}_2^+$ be the free monoid on two generators $a$ and $b$, which is independent. Let $G= H = Q = \mathbb{F}_2$ be the free group on $a$ and $b$. Let $\psi : \mathbb{F}_2 \rightarrow \mathbb{F}_2$ be the identity map. Clearly, $\psi$ is an injective homomorphism with $\psi(P)\subset Q$. Further, we find that $a^{-1} \in \psi(a)^{-1} \psi(P) \setminus \psi(P)$, so by virtue of \eqref{Eq:psi} we see that \eqref{Eq:missingpsi} fails in this case. 
%
%If the conclusion of Proposition \ref{p:injectivity} held true for $\psi$, we would have an induced $*$-homomorphism $\Psi_r : C^*_r(\mathbb{F}_2^+) \rightarrow C^*_r(\mathbb{F}_2)$ that maps generators to generators. 

It is well-known that $C^*_r(\mathbb{F}_2^+)$ is the Cuntz--Toeplitz algebra, which is the universal C*-algebra generated by isometries $\lambda^P_a$ and $\lambda^P_b$ with pairwise orthogonal ranges. On the other hand, $\lambda^Q_a$ and $\lambda^Q_b$ are unitaries. Thus, there is no $*$-homomorphism that maps $\lambda^P_a \mapsto \lambda^Q_a$ and $\lambda^P_b \mapsto \lambda^Q_b$. \qed
\end{example}

We now give an extension \cite[Lemma 7.5]{CEL2015} that does not require the semigroups to be Ore. Furthermore, it works beyond the ``diagonal" action $\lambda^P_p \mapsto \lambda^P_p \otimes \lambda^P_p$.

\begin{proposition} \label{P:tensor}
Let $P$ and $Q$ be semigroups embeddable into some groups $G$ and $H$ respectively. Assume that $P$ is independent and let $\varphi : G \rightarrow H$ be a homomorphism such that $\varphi(P) \subset Q$. Then there is a $*$-representation $\Delta_{\varphi} : \rC^*_r(P) \rightarrow \rC^*_r(P) \otimes_{\min} \rC^*_r(Q)$ such that
\[
\Delta_{\varphi}(\lambda^P_p) = \lambda^P_p \otimes \lambda^Q_{\varphi(p)}
\]
for every $p\in P$.
\end{proposition}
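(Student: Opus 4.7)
The plan is to mimic the strategy of Proposition \ref{p:injectivity}, constructing $\Delta_\varphi$ as the factorization through $\lambda_*^P$ of a $*$-representation of $\rC^*_s(P)$. On $\ell^2(P) \otimes \ell^2(Q)$, define the isometries $w_p = \lambda^P_p \otimes \lambda^Q_{\varphi(p)}$ for $p \in P$, and the projections $f_X = 1^P_X \otimes 1^Q_{\varphi(X)Q}$ for $X \in \J_P$ (these are well-defined bounded operators since $\varphi(X) \subset Q$). The first step is to check that these satisfy the three defining relations of $\rC^*_s(P)$, thereby producing by universality a $*$-homomorphism $\Phi : \rC^*_s(P) \to B(\ell^2(P) \otimes \ell^2(Q))$ with $\Phi(v_p) = w_p$ and $\Phi(e_X) = f_X$.

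Relations (i) and (ii) are immediate. For relation (iii), fix $p_i, q_i \in P$ with $p_1^{-1}q_1 \cdots p_n^{-1}q_n = e$ in $G$ and set $X = q_n^{-1}p_n \cdots q_1^{-1}p_1 P$. Because $\varphi$ is a group homomorphism, the analogous relation holds in $H$, so \cite[Lemma 3.11]{li2012} applied in both $\rC^*_r(P)$ and $\rC^*_r(Q)$ gives
\[
w_{p_1}^* w_{q_1} \cdots w_{p_n}^* w_{q_n} = 1^P_X \otimes 1^Q_{\varphi(q_n^{-1}p_n \cdots q_1^{-1}p_1) Q}.
\]
The match with $f_X$ reduces to the observation that
\[
\varphi(X)Q = \varphi(q_n^{-1}p_n\cdots q_1^{-1}p_1)\varphi(P)Q = \varphi(q_n^{-1}p_n\cdots q_1^{-1}p_1)Q,
\]
where the last equality uses $e \in \varphi(P) \subset Q$, which forces $\varphi(P)Q = Q$. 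Next, to descend $\Phi$ to $\rC^*_r(P)$, I will verify $\ker \lambda_*^P \subset \ker \Phi$ using the characterization \eqref{Eq:CE-kern}. The main tool is the conditional expectation $\widetilde E = E_r^P \otimes \id$ on $\rC^*_r(P) \otimes_{\min} \rC^*_r(Q)$. A direct computation on the spanning words $v_{p_1}^*v_{q_1}\cdots v_{p_n}^*v_{q_n}$, using the explicit formulas for $E_s^P$ and $E_r^P$ from \cite[Lemmas 3.11, 3.13]{li2012} together with the identity on $\varphi(X)Q$ just established, establishes the intertwining $\widetilde E \circ \Phi = \Phi \circ E_s^P$ on a norm-dense set, and hence on all of $\rC^*_s(P)$.

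The main obstacle is the faithfulness of $\widetilde E$ on the minimal tensor product, which is not automatic. I plan to deduce it by passing to the normal extension of $E_r^P$ to the double dual $\rC^*_r(P)^{**}$ and invoking the fact that a faithful normal conditional expectation tensors with the identity to yield a faithful normal conditional expectation on the spatial tensor product of von Neumann algebras, whose restriction to the minimal $\rC^*$-tensor product is then faithful. Granted this, if $E_s^P(a^*a) = 0$, then $\widetilde E(\Phi(a)^* \Phi(a)) = \Phi(E_s^P(a^*a)) = 0$, forcing $\Phi(a) = 0$; thus $\Phi$ descends to a $*$-homomorphism $\Delta_\varphi$ on $\rC^*_r(P)$. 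Since $\rC^*_r(P) = \rC^*(\{\lambda^P_p : p \in P\})$ and $\Delta_\varphi(\lambda^P_p) = w_p \in \rC^*_r(P) \otimes_{\min} \rC^*_r(Q)$, the image of $\Delta_\varphi$ automatically lies in the minimal tensor product, yielding the claimed statement.
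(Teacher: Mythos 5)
Your route is genuinely different from the paper's: the paper deduces Proposition \ref{P:tensor} in two lines, by applying Proposition \ref{p:injectivity} to the \emph{injective} homomorphism $g\mapsto (g,\varphi(g))$ of $G$ into $G\times H$ and then composing with Li's isomorphism $\rC^*_r(P\times Q)\cong \rC^*_r(P)\otimes_{\min}\rC^*_r(Q)$, whereas you rerun the universal-algebra/conditional-expectation argument directly with target $\rC^*_r(P)\otimes_{\min}\rC^*_r(Q)$. The gap in your argument is the verification of relation (iii). The subscript $q_n^{-1}p_n\cdots q_1^{-1}p_1P$ there denotes the \emph{constructible} right ideal, obtained by iterating the operations $X\mapsto pX$ and $X\mapsto q^{-1}X$ inside $P$; it is not the set $gP$ for the group element $g=q_n^{-1}p_n\cdots q_1^{-1}p_1$ (indeed $g=e$ whenever relation (iii) applies), and it is in general a finite intersection of translates. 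Consequently your key identity $\varphi(X)Q=\varphi(q_n^{-1}p_n\cdots q_1^{-1}p_1)\varphi(P)Q$ is unjustified, and the equality you actually need---that the constructible ideal $\varphi(q_n)^{-1}\varphi(p_n)\cdots\varphi(q_1)^{-1}\varphi(p_1)Q$ computed inside $Q$ coincides with $\varphi(X)Q$---fails. Concretely, take $P=\bN^2\subset G=\bZ^2$, $Q=\bN\subset H=\bZ$, $\varphi(m,n)=m+n$, $a=(1,0)$, $b=(0,1)$, and the word $v_av_a^*v_bv_b^*$ (padded with the unit so that relation (iii) applies). Its constructible ideal is $X=aP\cap bP=(a+b)+\bN^2$, so your $f_X$ equals $1^P_X\otimes 1^{Q}_{2+\bN}$, while $w_aw_a^*w_bw_b^*=1^P_X\otimes 1^{Q}_{1+\bN}$. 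Thus the family $\{w_p,f_X\}$ does not satisfy relation (iii), and the universal property of $\rC^*_s(P)$ cannot be invoked to produce $\Phi$.

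Moreover, this is not a bookkeeping slip that a better choice of $f_X$ would repair: the same ideal $X$ is also the constructible ideal of the word $v_{a+b}v_{a+b}^*$, whose $w$-image is $1^P_X\otimes 1^Q_{2+\bN}$, so relation (iii) would force $1^P_X\otimes 1^Q_{1+\bN}=\Phi(e_X)=1^P_X\otimes 1^Q_{2+\bN}$, which is impossible; hence no $*$-homomorphism of $\rC^*_s(P)$ with $v_p\mapsto\lambda^P_p\otimes\lambda^Q_{\varphi(p)}$ exists here, wherever the $e_X$ are sent. (Read inside the reduced algebras, the same computation shows that the identity $\lambda^P_a\lambda^{P*}_a\lambda^P_b\lambda^{P*}_b=\lambda^P_{a+b}\lambda^{P*}_{a+b}$, valid in $\rC^*_r(\bN^2)$, is not preserved by $\lambda^P_p\mapsto\lambda^P_p\otimes\lambda^Q_{\varphi(p)}$ for this non-injective $\varphi$, so the obstruction is intrinsic and is exactly the point that the injective case of Proposition \ref{p:injectivity} is meant to sidestep; any correct treatment must impose or verify compatibility of $\varphi$ with constructible ideals.) Separately, your justification of the faithfulness of $E^P_r\otimes\id$ via the normal extension to the double dual is also flawed---a faithful conditional expectation need not have a faithful normal extension to the bidual---but this part is repairable: if $x\geq 0$ and $(E^P_r\otimes\id)(x)=0$, then slicing by states $\psi$ of $\rC^*_r(Q)$ gives $E^P_r\bigl((\id\otimes\psi)(x)\bigr)=0$, hence $(\id\otimes\psi)(x)=0$ for all $\psi$, hence $x=0$ because product states separate positive elements of the minimal tensor product; alternatively one can restrict the canonical faithful normal expectation of $B(\ell^2(P))\,\bar{\otimes}\,B(\ell^2(Q))$ onto $\ell^\infty(P)\,\bar{\otimes}\,B(\ell^2(Q))$. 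The essential defect, however, is the failure of relation (iii) described above.
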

\begin{proof}
Let $\psi : G \rightarrow G \times H$ be the injective homomorphism given by $\psi(g) = (g,\varphi(g))$. Clearly $\psi(P) \subset P \times Q$. We claim that $\psi$ satisfies \eqref{Eq:missingpsi}. To see this, fix $p\in P$ and $X\in \J_P$. If $(a,b)\in \psi(p)^{-1}\psi(X)$, then there is $x\in X$ such that 
\[
(pa,\phi(p)b)=\psi(p)(a,b)=\psi(x)=(x,\phi(x)).
\]
Equivalently, $pa=x$ and $\phi(p)b=\phi(x)$, which forces $\phi(p)b=\phi(p)\phi(a)$, or $\phi(a)=b$. We conclude that $(a,b)=\psi(a)\in \psi(P)$. Hence $\psi(p)^{-1}\psi(X)\subset \psi(P)$, so \eqref{Eq:psi} implies that \eqref{Eq:missingpsi} holds.
Hence, by Proposition \ref{p:injectivity} we have a $*$-homomorphism $\Psi_r : \rC^*_r(P) \rightarrow \rC^*_r(P\times Q)$ such that 
\[
\Psi(\lambda^P_p) = \lambda^{P\times Q}_{(p,\varphi(p))}, \quad p\in P.
\] 
On the other hand, by \cite[Lemma 2.16]{li2012} there is a $*$-isomorphism $\Theta:\rC^*_r(P\times Q) \to \rC^*_r(P) \otimes_{\min} \rC^*_r(Q)$ such that
\[
\Theta(\lambda^{P\times Q}_{p,q})= \lambda^P_p \otimes \lambda^Q_q, \quad p\in P, q\in Q.
\]
The map  $\Delta_\phi=\Theta\circ \Psi_r$ has the required properties.
\end{proof}

Our next goal is to show that the map $\Delta_\phi$ above restricts to a coaction of $Q$ on $\A_r(P)$. To do this, we need the following simple observation.

\begin{lemma}[Fell's absorption principle for semigroups] \label{l:sfap}
Let $P$ be a semigroup, and suppose $V: P \rightarrow B(\fH)$ is a representation of $P$ by isometries. Then, there is an isometry $W\in B(\ell^2(P)\otimes \fH)$ such that
\[
W (\lambda_p\otimes \id_{\fH})=(\lambda_p\otimes V_p)W, \quad p\in P.
\]
\end{lemma}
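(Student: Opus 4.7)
The plan is to mimic the classical Fell absorption unitary construction, but with an isometry instead of a unitary, taking advantage of the fact that the $V_p$ are isometries (not necessarily unitaries). Concretely, I would first define a linear map $W_0$ on the algebraic tensor product $\operatorname{span}\{e_q : q \in P\} \odot \fH$ by declaring
\[
W_0(e_q \otimes \xi) = e_q \otimes V_q \xi
\]
on simple tensors and extending by linearity. The idea is that $W_0$ applies the isometry $V_q$ fiberwise along the orthogonal basis $\{e_q\}_{q \in P}$ of $\ell^2(P)$.

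Next I would check that $W_0$ extends to a (bounded) isometry on all of $\ell^2(P) \otimes \fH$. Since $\{e_q\}_{q \in P}$ is an orthonormal basis of $\ell^2(P)$, every element of the algebraic tensor product can be written as a finite sum $\eta = \sum_q e_q \otimes \xi_q$ with the $\xi_q \in \fH$ uniquely determined and the $e_q \otimes \xi_q$ mutually orthogonal. Then
\[
\|W_0 \eta\|^2 = \sum_q \|e_q \otimes V_q \xi_q\|^2 = \sum_q \|V_q \xi_q\|^2 = \sum_q \|\xi_q\|^2 = \|\eta\|^2,
\]
using that each $V_q$ is an isometry. Hence $W_0$ extends uniquely to an isometry $W \in B(\ell^2(P) \otimes \fH)$.

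Finally, I would verify the intertwining identity on simple tensors and then conclude by linearity and continuity. Fixing $p, q \in P$ and $\xi \in \fH$, the left-hand side gives
\[
W(\lambda_p \otimes \id_\fH)(e_q \otimes \xi) = W(e_{pq} \otimes \xi) = e_{pq} \otimes V_{pq}\xi,
\]
while the right-hand side gives
\[
(\lambda_p \otimes V_p) W(e_q \otimes \xi) = (\lambda_p \otimes V_p)(e_q \otimes V_q \xi) = e_{pq} \otimes V_p V_q \xi.
\]
These agree because $V$ is a semigroup homomorphism, so $V_{pq} = V_p V_q$. Extending by linearity and density of the algebraic tensor product yields the required identity on all of $\ell^2(P) \otimes \fH$.

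There is essentially no hard step here; the only point requiring care is the implicit use of the fact that the $e_q \otimes V_q \xi_q$ remain mutually orthogonal (ensuring the norm computation is a genuine sum of squared norms), which follows from the orthonormality of $\{e_q\}_{q \in P}$. The key conceptual takeaway is that, unlike in the group/unitary setting, $W$ is in general not surjective, which is why one only obtains an isometry rather than a unitary intertwiner.
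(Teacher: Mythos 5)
Your proposal is correct and follows essentially the same route as the paper: define $W$ on simple tensors by $e_q \otimes \xi \mapsto e_q \otimes V_q\xi$, note it extends to an isometry since the $V_q$ are isometries, and verify the intertwining relation on basis vectors using $V_{pq}=V_pV_q$. Your explicit Pythagoras check of isometry and the remark that $W$ need not be unitary simply spell out details the paper leaves to the reader.
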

\begin{proof}
The map
\[
e_p \otimes \xi\mapsto e_p \otimes V_p(\xi), \quad p\in P, \xi\in \fH
\]
is easily seen to extend to an isometry  $W:\ell^2(P)\otimes \fH\to \ell^2(P)\otimes \fH$.  But then, for $p,q \in P$ and $\xi \in \fH$ we have
\begin{align*}
(\lambda_q \otimes V_q) W (e_p \otimes \xi) &= e_{qp} \otimes V_{qp}(\xi) = W(e_{qp} \otimes \xi) \\
&= W(\lambda_q \otimes \id_{\fH})(e_p \otimes \xi).
\end{align*}
We infer that $W (\lambda_q\otimes \id_{\fH})=(\lambda_q\otimes V_q)W$ as required.
\end{proof}

We now arrive at a result which will provide us with many examples of coactions.

\begin{theorem} \label{t:coaction}
Let $P$ and $Q$ be semigroups embeddable into groups $G$ and $H$ respectively. Assume that $P$ is independent and let $\varphi : G \rightarrow H$ be a homomorphism such that $\varphi(P) \subset Q$. Then, there is a coaction $\delta$ of $Q$ on $\A_r(P)$ such that
\[
\delta(\lambda^P_p)=\lambda^P_p\otimes \lambda^Q_{\phi(p)}, \quad p\in P.
\]
\end{theorem}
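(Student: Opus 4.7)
The plan is to extract the coaction directly from Proposition \ref{P:tensor} by restriction, and then verify the three things required by Definition \ref{d:coaction}: that the map actually lands in the right tensor product, that it is completely isometric, and that the spectral subspaces span a dense subset.

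First, Proposition \ref{P:tensor} supplies a $*$-representation
\[
\Delta_{\varphi} : \rC^*_r(P) \to \rC^*_r(P) \otimes_{\min} \rC^*_r(Q), \qquad \Delta_\varphi(\lambda^P_p) = \lambda^P_p \otimes \lambda^Q_{\varphi(p)}.
\]
I would define $\delta$ as the restriction of $\Delta_\varphi$ to $\A_r(P)$. On generators, $\delta(\lambda^P_p) = \lambda^P_p \otimes \lambda^Q_{\varphi(p)}$ lies in $\A_r(P) \otimes_{\min} \A_r(Q)$, so by linearity and continuity (and the fact that the min-tensor product of the two non-selfadjoint algebras sits as a closed subalgebra of the ambient $\rC^*$-tensor product), the image of $\delta$ is contained in $\A_r(P) \otimes_{\min} \A_r(Q)$, as required.

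The key step is verifying complete isometry, and this is where I would invoke Fell's absorption (Lemma \ref{l:sfap}). Since $\varphi$ is a homomorphism and $\varphi(P) \subset Q$, the assignment $p \mapsto \lambda^Q_{\varphi(p)}$ is a representation of $P$ by isometries on $\ell^2(Q)$. The lemma thus provides an isometry $W \in B(\ell^2(P) \otimes \ell^2(Q))$ satisfying $W(\lambda^P_p \otimes \id) = (\lambda^P_p \otimes \lambda^Q_{\varphi(p)}) W = \delta(\lambda^P_p) W$ for every $p \in P$. Multiplying on the left by $W^*$ and using $W^*W = \id$ gives $\lambda^P_p \otimes \id = W^* \delta(\lambda^P_p) W$, an identity that extends by linearity and continuity (and in each matrix amplification) to
\[
a \otimes \id_{\ell^2(Q)} = W^* \delta(a) W \qquad (a \in \A_r(P)).
\]
Since $\delta$ is completely contractive (as the restriction of a $*$-representation) and compression by an isometry is completely contractive, the chain $\|a\|_n = \|a \otimes \id\|_n \leq \|\delta(a)\|_n \leq \|a\|_n$ at every matrix level forces $\delta$ to be a completely isometric homomorphism.

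Finally, each generator $\lambda^P_p$ lies in the spectral subspace $\A_r(P)^\delta_{\varphi(p)}$ by construction, so the algebraic span of $\{\lambda^P_p : p \in P\}$ is contained in $\sum_{q \in Q} \A_r(P)^\delta_q$. This span is dense in $\A_r(P)$, which yields the coaction density condition. I expect no serious obstacle: the only conceptual point is that without Fell's absorption, the completely isometric conclusion would not be automatic from the fact that $\Delta_\varphi$ is a $*$-representation and $\A_r(P) \hookrightarrow \rC^*_r(P)$ is completely isometric.
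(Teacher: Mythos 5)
Your proposal is correct and follows essentially the same route as the paper: restrict $\Delta_\varphi$ from Proposition \ref{P:tensor} to $\A_r(P)$, use Lemma \ref{l:sfap} (Fell's absorption for semigroups) to produce the isometry $W$ intertwining $a\otimes\id$ with $\delta(a)$, deduce complete isometry from $\|A\|=\|A\otimes\id\|\leq\|\delta^{(n)}(A)\|\leq\|A\|$, and get the density condition from the generators $\lambda^P_p\in\A^\delta_{\varphi(p)}$. The only cosmetic difference is that you compress by $W^*$ on the left whereas the paper multiplies by $W$ on the right in the norm estimate; the content is identical.
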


\begin{proof}
By virtue of Proposition \ref{P:tensor}, there is a $*$-representation $\Delta_{\varphi} : \rC^*_r(P) \rightarrow \rC^*_r(P) \otimes_{\min} \rC^*_r(Q)$ such that
\[
\Delta_{\varphi}(\lambda^P_p) = \lambda^P_p \otimes \lambda^Q_{\varphi(p)}
\]
Note that the injectivity of the minimal tensor product we have $\A_r(P) \otimes_{\min} \A_r(Q) \subset \rC_r^*(P) \otimes_{\min} \rC_r^*(Q)$. Thus, we obtain a representation $\delta:\A_r(P)\to \A_r(P)\otimes_{\min}\A_r(Q)$ by putting $\delta=\Delta_\phi|_{\A_r(P)}$. In particular, we conclude that $\delta$ takes values in $\A_r(P)\otimes_{\min}\A_r(Q)$, as required.   By construction, it is clear that the sum of the spectral subspaces for $\delta$ is norm dense in $\A_r(P)$.

Let $V : P \rightarrow B(\ell^2(Q))$ be given by $V_p = \lambda^Q_{\varphi(p)}$, which is a representation of $P$ by isometries. By Lemma \ref{l:sfap} there is an isometry $W$ such that 
\[
W (\lambda^P_p\otimes \id_{\fH})=\delta(\lambda^P_p)W, \quad p\in P.
\]
Hence, we have
\[
W(a\otimes \id_\fH)=\delta(a)W, \quad a\in \A_r(P).
\]
Let $A=[a_{ij}]\in \bM_n(\A_r(P))$ for some $n\in \bN$. We find
\begin{align*}
\|A\|&=\|[a_{ij}\otimes I_\fH]\|=\| [Wa_{ij}\otimes I_\fH]\|\\
&=\| \delta^{(n)}(A)(W\oplus W\oplus \ldots \oplus W)\|\leq \|\delta^{(n)}(A)\|.
\end{align*}
We infer that $\delta$ is completely isometric, and thus is indeed a coaction of $Q$ on $\A_r(P)$.
 \end{proof}

The following shows that the notion of a semigroup coaction in Definition \ref{d:coaction} is deserving of its name, at least for independent semigroups embedded in groups.

\begin{corollary}
Let $P$ be an independent semigroup embedded in a group $G$. Then there is a comultiplication on $\A_r(P)$, which is the completely isometric homomorphism $\Delta_P : \A_r(P) \rightarrow \A_r(P) \otimes_{\min} \A_r(P)$ given by $\Delta_P(\lambda_p^P) = \lambda_p^P \otimes \lambda_p^P$.

If moreover $P$ acts on some operator algebra $\A$ by $\delta$, then we automatically have the coaction identity $(\delta \otimes \id) \circ \delta = (\id \otimes \Delta_P) \circ \delta$.
\end{corollary}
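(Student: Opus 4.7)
The plan is to deduce both statements essentially formally from Theorem \ref{t:coaction} and the definition of a coaction.

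For the existence of the comultiplication $\Delta_P$, I would apply Theorem \ref{t:coaction} with $Q=P$, $H=G$, and $\varphi$ taken to be the identity homomorphism on $G$. Since $P$ is assumed unital and independent and embeds into $G$, and since $\varphi(P)=P \subset P=Q$, the theorem applies directly and produces a completely isometric homomorphism $\Delta_P\colon \A_r(P)\to \A_r(P)\otimes_{\min}\A_r(P)$ satisfying $\Delta_P(\lambda_p^P)=\lambda_p^P\otimes\lambda_p^P$ for every $p\in P$, together with density of the span of spectral subspaces. This is precisely a coaction in the sense of Definition \ref{d:coaction}, so nothing further is required for the first assertion.

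For the coaction identity, the strategy is to verify the equality on each spectral subspace $\A_p^{\delta}$ and then invoke density together with boundedness to extend it to all of $\A$. Fix $p\in P$ and $a\in \A_p^{\delta}$, so that $\delta(a)=a\otimes \lambda_p^P$. Then a direct computation gives
\[
(\delta\otimes\id)\circ\delta(a)=(\delta\otimes\id)(a\otimes\lambda_p^P)=\delta(a)\otimes\lambda_p^P=a\otimes\lambda_p^P\otimes\lambda_p^P,
\]
while on the other side
\[
(\id\otimes\Delta_P)\circ\delta(a)=(\id\otimes\Delta_P)(a\otimes\lambda_p^P)=a\otimes\Delta_P(\lambda_p^P)=a\otimes\lambda_p^P\otimes\lambda_p^P.
\]
Thus the two maps $(\delta\otimes\id)\circ\delta$ and $(\id\otimes\Delta_P)\circ\delta$ agree on $\A_p^{\delta}$ for every $p\in P$, and hence on the algebraic sum $\sum_{p\in P}\A_p^{\delta}$, which is norm dense in $\A$ by the definition of a coaction. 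Since both maps are completely contractive (being compositions of coactions with the ampliation of a coaction by the identity, all of which are completely isometric homomorphisms), they are in particular norm continuous. A standard density argument then yields the coaction identity
\[
(\delta\otimes\id)\circ\delta=(\id\otimes\Delta_P)\circ\delta
\]
on all of $\A$.

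No genuine obstacle arises: once Theorem \ref{t:coaction} is in place the first statement is immediate, and the second is just the observation that the coaction identity is built into the defining relation on spectral subspaces, with density and continuity handling the extension to $\A$.
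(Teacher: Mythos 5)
Your proof is correct and follows essentially the same route as the paper: the first assertion is obtained by applying Theorem \ref{t:coaction} with $\varphi$ the identity on $G$, and the coaction identity is checked on each spectral subspace $\A_p^{\delta}$ and extended by density and (complete) contractivity. The explicit computation you give on $\A_p^\delta$ is exactly what the paper leaves as "easily deduced."
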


\begin{proof}
The first part follows directly from Theorem \ref{t:coaction} by taking $\varphi$ to be the identity on $G$. For the second part, by definition of coaction it suffices to verify the coaction identity on each $\A_p^{\delta}$, but this is easily deduced.
\end{proof}

The last piece of the puzzle is to verify that the coaction constructed in Theorem \ref{t:coaction} is in fact RFD. For this purpose, the following elementary fact will be useful.

\begin{lemma}\label{L:spectralsub}
Let $P$ and $Q$ be semigroups. Let $\phi:P\to Q$ be a homomorphism and let $\delta:\A_r(P)\to \A_r(P)\otimes_{\min} \A_r(Q)$ be a representation such that
\[
\delta(\lambda^P_p)=\lambda^P_p\otimes \lambda^Q_{\phi(p)}, \quad p\in P.
\]
Then, for each $q\in Q$ the set $\{a\in \A_r(P):\delta(a)=a\otimes \lambda^Q_q\}$ coincides with the norm closure of $\sum_{p\in \phi^{-1}(q)}\bC \lambda^P_p$.
\end{lemma}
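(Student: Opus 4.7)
The containment $\overline{\sum_{p \in \phi^{-1}(q)} \bC \lambda^P_p} \subseteq \{a \in \A_r(P) : \delta(a) = a\otimes \lambda^Q_q\}$ is immediate: each generator $\lambda^P_p$ with $\phi(p)=q$ satisfies the defining equation by the hypothesis on $\delta$, and the spectral subspace is norm-closed as the kernel of the bounded linear map $a \mapsto \delta(a) - a\otimes \lambda^Q_q$. The substantive content is the reverse inclusion, and the plan is to construct an explicit contractive ``Fourier projection'' onto the closed span as a composition of $\delta$ with a suitably chosen slice map.

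The key construction will be the following. Fix any $r \in Q$ and consider the vector functional $\tau: \A_r(Q) \to \bC$ defined by $\tau(T) = \langle T e_r, e_{qr}\rangle$. Since $\lambda^Q_p e_r = e_{pr}$, we have $\tau(\lambda^Q_p) = \langle e_{pr}, e_{qr}\rangle$, and right cancellativity of $Q$ forces this scalar to equal $\delta_{p,q}$. This is precisely the step where the hypothesis on $Q$ enters: cancellativity makes $\tau$ detect exactly the element $q$ among the generators $\{\lambda^Q_p\}_{p \in Q}$. Then, by standard operator-space theory, $\tau$ induces a completely bounded slice map $E := \id \otimes \tau: \A_r(P) \otimes_{\min} \A_r(Q) \to \A_r(P)$ of norm at most one.

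The composition $E \circ \delta$ will then be a bounded linear endomorphism of $\A_r(P)$ which, on the dense linear span of the generators, acts by $\lambda^P_p \mapsto \tau(\lambda^Q_{\phi(p)}) \lambda^P_p$; this equals $\lambda^P_p$ when $\phi(p)=q$ and $0$ otherwise. Continuity will then force the image of $E \circ \delta$ to lie inside $\overline{\sum_{p \in \phi^{-1}(q)} \bC \lambda^P_p}$. Finally, for any $a$ in the spectral subspace, the computation $(E\circ \delta)(a) = E(a\otimes \lambda^Q_q) = \tau(\lambda^Q_q)\cdot a = a$ will place $a$ inside the desired closed span, finishing the argument. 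I do not expect a significant obstacle; the only technical point is the well-definedness of the slice map on the minimal tensor product, which is a classical fact, and the argument gracefully handles the edge case $\phi^{-1}(q) = \varnothing$ since then $(E \circ \delta)(a) = 0$ for every $a \in \A_r(P)$.
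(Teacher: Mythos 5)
Your proposal is correct, but it follows a genuinely different route from the paper's. The paper argues by direct approximation: given $a$ with $\delta(a)=a\otimes\lambda^Q_q$, it chooses a finite combination $a'=\sum_{p}c_p\lambda^P_p$ with $\|a-a'\|<\eps$, applies $\delta$ to $a'-a$, evaluates on vectors $h\otimes e_s$, and uses the fact that right cancellativity makes $\{e_{rs}\}_{r\in Q}$ orthonormal to conclude, via Pythagoras, that $\bigl\|\bigl(\sum_{p\in\phi^{-1}(q)}c_p\lambda^P_p-a\bigr)h\bigr\|\le\eps$, so the $\phi^{-1}(q)$-part of the approximant already approximates $a$. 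You instead package the very same matrix-coefficient computation (namely that $\langle e_{\phi(p)r},e_{qr}\rangle$ is $1$ if $\phi(p)=q$ and $0$ otherwise, by right cancellativity) into a vector functional $\tau$ and form the contractive slice map $E=\id\otimes\tau$ on $\A_r(P)\otimes_{\min}\A_r(Q)$, so that $E\circ\delta$ is a bounded map sending the dense span of the generators $\lambda^P_p$ into the closed span of $\{\lambda^P_p:\phi(p)=q\}$ while fixing every element of the spectral subspace; membership is then immediate, and the empty-fibre case is handled automatically. Both arguments hinge on exactly the same use of cancellativity; yours trades the explicit $\eps$-estimate for the (standard, and here concretely realizable, since $\tau$ is a vector functional) existence of slice maps on the minimal tensor product, and as a by-product exhibits a contractive projection of $\A_r(P)$ onto the spectral subspace, whereas the paper's version is more elementary and self-contained.
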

\begin{proof}
Fix $q\in Q$ and $a\in \A_r(P)$ such that $\delta(a)=a\otimes \lambda^Q_q$. Let $\eps>0$. Then there is a finite linear combination $a'=\sum_{p\in P}c_p \lambda^P_p$ with $c_p\in \bC$ such that $\|a-a'\|<\eps$. Let $h\in \ell^2(P)$ be a unit vector and let $s\in Q$. Note that
\[
\delta(a')(h\otimes e_s)=\sum_{p\in P} c_p \lambda^P_p h\otimes e_{\phi(p)s} \ \ \text{and} \ \ \delta(a)(h\otimes e_s)=ah\otimes e_{qs}.
\]
Since $Q$ is cancellative, we see that the set $\{e_{rs}:r\in Q\}$ is orthonormal, whence by applying Pythagoras' theorem we get
\begin{align*}
&\left\| \left(\sum_{p\in \phi^{-1}(q)} c_p \lambda^P_p-a\right)h\right\|^2 \\
&\leq \left\| \sum_{p\notin \phi^{-1}(q)} c_p \lambda^P_p h\otimes e_{\phi(p)s} \right\|^2 + \left\|\left(\sum_{p\in \phi^{-1}(q)} c_p \lambda^P_p-a\right )h\otimes e_{qs}\right\|^2\\
&=\|\delta(a'-a)(h\otimes e_s)\|^2\leq \eps^2.
\end{align*}
Since $\eps>0$ and the unit vector $h\in  \ell^2(P)$ were chosen arbitrarily, this implies that $a$  lies in the norm closure of $\sum_{p\in \phi^{-1}(q)}\bC \lambda^P_p$. The reverse inclusion is obvious.
\end{proof}

Let $P$ and $Q$ be semigroups embedded in some groups $G$ and $H$ respectively. A homomorphism $\varphi : G \rightarrow H$ is a \emph{$(P,Q)$-map} if $\varphi(P) \subset Q$ and the set $\phi^{-1}(q)\cap P$ is finite for every $q\in Q$. It is readily verified that in the presence of a $(P,Q)$-map with $\phi(P)=Q$, if $P$ has FDP then so does $Q$. 
Surjective $(P,Q)$-maps with $\phi(P)=Q$ are sometimes called controlled maps. These were originally introduced in \cite{LR1996} as generalizations of length functions. Various kinds of controlled maps were used successfully in the literature to establish amenability and nuclearity properties for semigroup $\rC^*$-algebras \cite{CL2002, CL2007, aHRT2018, aHNSY2021}.

We now arrive at one of the main results of the paper. Recall that a cancellative semigroup $P$ is said to be \emph{left-amenable} if it admits a left-invariant mean. More precisely, there is a state $\mu$ on $\ell^{\infty}(P)$ such that for every $p\in P$, and $f\in\ell^{\infty}(P)$ we have that $\mu(p\cdot f) =\mu(f)$ where $(p \cdot f)(q) = f(pq)$. For instance, all abelian semigroups are left-amenable.

\begin{theorem}\label{T:main-semigroup}
Let $P$ and $Q$ be countable semigroups embedded in some groups $G$ and $H$ respectively. Assume that $P$ and $Q$ are independent, and that $Q$ is left-amenable and has FDP. Assume also that there exists a $(P,Q)$-map $\varphi :G \rightarrow H$. Then, there is an RFD coaction of $Q$ on $\A_r(P)$ and $\rC^*_{\max}(\A_r(P))$ is RFD.
\end{theorem}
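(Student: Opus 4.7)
The plan is to verify the hypotheses of Theorem \ref{T:coactionC*maxRFD} applied to $Q$ coacting on $\A_r(P)$, namely an RFD coaction of $Q$ together with a character $\chi:\A_r(Q)\to\bC$ sending each $\lambda^Q_q$ to $1$ (countability, cancellativity and FDP of $Q$, and separability of $\A_r(P)$, are immediate from the hypotheses). For the character, I would exploit left-amenability of $Q$: fix a left-Folner net $(F_n)$, form the unit vectors $\xi_n=|F_n|^{-1/2}\sum_{q\in F_n}e_q\in\ell^2(Q)$, and note that the Folner condition combined with cancellativity forces $\langle\lambda^Q_q\xi_n,\xi_n\rangle=|F_n\cap q^{-1}F_n|/|F_n|\to 1$ for every $q\in Q$. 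Hence $\bigl|\sum_q c_q\bigr|\leq\bigl\|\sum_q c_q\lambda^Q_q\bigr\|$ on all finite linear combinations, so $\lambda^Q_q\mapsto 1$ extends by continuity to a bounded, multiplicative, completely contractive character on $\A_r(Q)$.

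For the coaction, Theorem \ref{t:coaction} directly yields $\delta:\A_r(P)\to\A_r(P)\otimes_{\min}\A_r(Q)$ with $\delta(\lambda^P_p)=\lambda^P_p\otimes\lambda^Q_{\varphi(p)}$. To show $\delta$ is RFD, fix a finite $F\subset Q$ and set $E_F=\bigcup_{q\in F}\bigcup_{r\in\R_q}\L_r\subset Q$, which is finite by FDP. By Lemma \ref{L:spectralsub}, the $q$-th spectral subspace of $\delta$ equals $\operatorname{span}\{\lambda^P_p:\varphi(p)=q\}$, so the ideal defining $\Q_F$ contains every $\lambda^P_p$ with $\varphi(p)\notin E_F$. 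Because $\varphi$ is a $(P,Q)$-map, $S:=\varphi^{-1}(E_F)$ is a finite subset of $P$, and in $\Q_F$ the images $\{[\lambda^P_p]:p\in S\}$ linearly span the quotient and satisfy the partial-semigroup-with-zero law $[\lambda^P_p][\lambda^P_{p'}]=[\lambda^P_{pp'}]$ when $pp'\in S$ and $0$ otherwise. Consequently, $\Q_F$ is finite-dimensional.

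The main obstacle is to upgrade this to the RFD-ness of $\rC^*_{\max}(\Q_F)$. The $0$-semigroup structure on $S\cup\{0\}$ suggests realizing $\Q_F$ concretely inside a finite-dimensional matrix algebra via a regular-style representation of $S\cup\{0\}$ on $\bC^{|S|}$, so that its $\rC^*$-envelope is finite-dimensional; one then promotes this finite-dimensional picture to the maximal $\rC^*$-cover by invoking the finite-dimensional analysis of maximal covers developed in \cite{CR2018rfd, CTh2020fdim}. Once the coaction is known to be RFD, Theorem \ref{T:coactionC*maxRFD} immediately yields that $\rC^*_{\max}(\A_r(P))$ is RFD, completing the proof.
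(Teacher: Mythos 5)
Your proposal follows the same skeleton as the paper's proof: the coaction comes from Theorem \ref{t:coaction}, the spectral subspaces are identified via Lemma \ref{L:spectralsub}, FDP of $Q$ plus the finite fibers of $\varphi$ make each quotient $\Q_F$ finite-dimensional, and one concludes via Theorem \ref{T:coactionC*maxRFD}. The two places where you deviate are worth commenting on. First, the character: the paper simply invokes \cite{li2017}*{Theorem 6.42} (which uses independence and left-amenability of $Q$), whereas you build $\chi$ by hand from F\o{}lner sets; your computation $\langle \lambda^Q_q\xi_n,\xi_n\rangle=|F_n\cap q^{-1}F_n|/|F_n|\to 1$ is correct and does yield a character on $\A_r(Q)$ with $\chi(\lambda^Q_q)=1$, and it has the mild advantage of not using independence of $Q$; just note that the existence of a F\o{}lner net for a left-amenable cancellative semigroup is itself a nontrivial (though classical) fact that you should cite rather than assert. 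Second, and more importantly, the step you call the ``main obstacle'' is handled in the paper by a single citation: $\Q_F$ is finite-dimensional, and the maximal $\rC^*$-cover of any finite-dimensional operator algebra is RFD by \cite{CR2018rfd}*{Theorem 5.1}. Your proposed mechanism --- realize $\Q_F$ in a matrix algebra so that its $\rC^*$-envelope is finite-dimensional and then ``promote'' this to $\rC^*_{\max}(\Q_F)$ --- is not a proof as stated: residual finite-dimensionality (or even finite-dimensionality) of one $\rC^*$-cover does not formally pass to the maximal cover; indeed, whether RFD-ness transfers from $\A$ to $\rC^*_{\max}(\A)$ is precisely Question \ref{Q:rfdC*max}. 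Since the needed statement is exactly the cited theorem of \cite{CR2018rfd}, your argument is complete once that reference replaces the heuristic; as written, that step is the one genuine soft spot in the proposal.
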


\begin{proof}
By Theorem \ref{t:coaction}, there is a coaction $\delta$ of $Q$ on $\A_r(P)$ such that
\[
\delta(\lambda^P_p) = \lambda^P_p \otimes \lambda^Q_{\varphi(p)}, \quad p\in P.
\] 
For $q\in Q$ we let
\[
\B_q=\{a\in \A_r(P):\delta(a)=a\otimes \lambda^Q_q \}
\]
be the corresponding spectral subspace for $\delta$. For each $q\in Q$, we see by Lemma \ref{L:spectralsub} that $\B_q$ is the norm closure of $\sum_{p\in \phi^{-1}(q)}\bC \lambda^P_p$. 

Let $(\pi_F)$ be the Exel--Loring approximation for the identity representation of $\A_r(Q)$ given by Proposition \ref{P:ArP}. Thus, for each finite subset $F\subset Q$ we see that
\[
Q \setminus \left( \cup_{q\in F}\cup_{r\in \R_q}\L_r\right)=  (\lambda^Q)^{-1}(\ker \pi_F).
\] 
By definition of a coaction, we know that $\A_r(P)$ is the norm closure of 
\[
\sum_{q\in Q}\B_q=\sum_{q\in Q}\sum_{p\in \phi^{-1}(q)}\bC \lambda^P_p.
\]
We conclude that the quotient $\Q_F$ of $\A_r(P)$ by the norm closure of the ideal generated by
$
\sum_{q \in (\lambda^Q)^{-1}(\ker \pi_F)}\B_q
$
is spanned by the image in the quotient of
\[
\sum_{q\in F}\sum_{r\in \R_q}\sum_{p\in \varphi^{-1}(\L_r)}\bC \lambda_p^P.
\]
Since $Q$ has FDP and $\varphi$ is a $(P,Q)$-map, we get that $\Q_F$ is finite-dimensional. We infer that $\rC^*_{\max}(\Q_F)$ is RFD by virtue of \cite[Theorem 5.1]{CR2018rfd}. Thus, we see that $\delta$ is an RFD coaction.

Finally, since $Q$ is left-amenable and independent, by \cite[Theorem 6.42]{li2017} there is a character $\chi:\rC^*_r(Q)\to \bC$. Clearly, for every $q\in Q$ we have $\chi(\lambda^Q_q)=1$  since $\lambda^Q_q$ is an isometry.  Thus, by Theorem \ref{T:coactionC*maxRFD} we get that $\rC^*_{\max}(\A_r(P))$ is RFD.
\end{proof}

We now extract some applications of the previous result.

\begin{corollary}
Let $P$ be a countable, independent semigroup with FDP that is embedded in an abelian group. Then $\rC^*_{\max}(\A_r(P))$ is RFD. 
\end{corollary}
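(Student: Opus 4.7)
The plan is to deduce this as a direct application of Theorem \ref{T:main-semigroup} by taking $Q = P$ and $\varphi$ to be the identity map. More precisely, since $P$ is embedded in an abelian group $G$, set $Q = P$, $H = G$, and $\varphi = \id_G : G \to G$. I will verify the four hypotheses of Theorem \ref{T:main-semigroup} in turn.

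First, $P$ and $Q = P$ are both countable and independent by assumption. Second, $Q = P$ has FDP by assumption. Third, $Q = P$ inherits commutativity from the ambient abelian group $G$, hence is abelian, and every abelian semigroup is left-amenable (one can directly construct an invariant mean on $\ell^\infty(P)$ using, say, a Banach limit, or invoke the standard fact that abelian semigroups are amenable). Fourth, the identity map $\varphi = \id_G$ certainly satisfies $\varphi(P) = P \subset Q$, and for every $q \in Q$ the preimage $\varphi^{-1}(q) = \{q\}$ is finite, so $\varphi$ is a $(P,Q)$-map in the sense defined before Theorem \ref{T:main-semigroup}.

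With all hypotheses verified, Theorem \ref{T:main-semigroup} produces an RFD coaction of $Q = P$ on $\A_r(P)$ and concludes that $\rC^*_{\max}(\A_r(P))$ is RFD, which is exactly the desired statement. There is no real obstacle here; the corollary is purely a specialization of the main semigroup theorem to the diagonal coaction on an abelian semigroup, and the only mild point worth flagging is the verification of left-amenability of $P$, which is immediate from its commutativity.
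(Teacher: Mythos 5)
Your proposal is correct and is essentially the paper's own argument: the paper likewise takes $Q=P$ and $\varphi$ the identity, notes that $P$ is abelian (hence left-amenable) as a subsemigroup of an abelian group, and applies Theorem \ref{T:main-semigroup}. Your extra verification of the remaining hypotheses is accurate but routine.
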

\begin{proof}
Every abelian semigroup is left-amenable. Hence, in Theorem \ref{T:main-semigroup} we may take $\varphi$ to be the identity, so that $\rC^*_{\max}(\A_r(P))$ is RFD.
\end{proof}

In paricular, the previous result shows that $\rC^*_{\max}(\A_r(\bN^d))$ is RFD for every $d \in \bN$ (including $d=\aleph_0$). We now study some classical examples of semigroups.

\begin{example}[Left-angled Artin monoids] \label{ex:laam}
Let $\Gamma = (V,E)$ be a finite undirected graph, where two vertices are connected by at most one edge and no vertex is connected to itself. We let $A_{\Gamma}$ be the group generated by $V$ as formal generators where, for $v,w\in W$, we have $vw=wv$ if  if there is an edge between $v$ in $w$.
We then define the unital semigroup $A_{\Gamma}^+$ as the semigroup generated by the identity and $\{v:v\in V\}$. It is known that right-angled Artin groups are quasi-lattice ordered by their monoids (see \cite{CL2002}), so we get that $A_{\Gamma}^+$ is right LCM, and is hence independent.

Next, consider the group $\bZ^V$. For each $v\in V$, let $e_v\in \bZ^V$ denote the indicator function of $\{v\}$.  There is a surjective group homomorphism $\varphi : A_{\Gamma} \rightarrow \mathbb{Z}^V$ such that $\varphi(v) = e_v$. Viewing $\mathbb{N}^V$ as a subsemigroup of $\mathbb{Z}^V$, it is clear that  $\varphi(A_{\Gamma}^+) = \mathbb{N}^V$ and $\phi^{-1}(f)$ is finite for every $f\in \bN^V$. Thus, $\varphi$ is a $(A_\Gamma^+,\bN^V)$-map. As a consequence of Theorem \ref{T:main-semigroup}, we get that $\rC^*_{\max}(\A_r(A^+_{\Gamma}))$ is RFD.
\qed
\end{example}

\begin{example}[Braid monoids] \label{ex:braid}
For $n\geq 3$ let $B_n$ be the group generated by elements $\sigma_1,...,\sigma_{n-1}$ subject to the relations
$$
\sigma_i\sigma_{i+1}\sigma_i = \sigma_{i+1}\sigma_i\sigma_{i+1} \ \text{for} \ 1\leq i \leq n-2
$$
and
$$
\sigma_i\sigma_j = \sigma_j\sigma_i \ \text{for} \ |i-j|\geq 2.
$$
We let $B_n^+$ be the unital semigroup generated by $\sigma_1,...,\sigma_{n-1}$ in $B_n$.

For each $n\geq 3$, it follows from \cite[Lemma 2.1]{BS1972} that $B_n^+$ is right LCM, and hence is independent (see also the discussion preceding \cite[Lemma 6.33]{li2017}). There is a surjective group homomorphism $\varphi : B_n \rightarrow \mathbb{Z}$ such that $\varphi(\sigma_i) = 1$ for every $i$.  It is readily seen that $\phi$ is a $(B_n^+,\bN)$-map. By Theorem \ref{T:main-semigroup}, we get that $\rC^*_{\max}(\A_r(B_n^+))$ is RFD.
\qed
\end{example}

%
%\begin{example}[Thompson's monoid] \label{ex:Thompson}
%Let $F$ be the group generated by countably many generators $\{x_i\}_{i=0}^{\infty}$ subject to the relations $x_n x_k = x_k x_{n+1}$ for $k< n$. Then, Thompson's monoid $F^+$ is the unital subsemigroup of $F$ generated by $\{x_i\}_{i=0}^{\infty}$. It follows from the discussion preceding \cite[Lemma 6.33]{li2017} that $F^+$ is independent.
%
%There is a surjective group homomorphims $\varphi : F \rightarrow \bZ$ such that $\phi(x_i)=1$ for every $i$. Since $\varphi(F^+) = \mathbb{N}$ and $\varphi|_{F^+}$ is finite-to-one, we see that $\varphi$ is a controlled map. By \ref{T:main-semigroup} and the fact that $\mathbb{N}$ is FDP and left-amenable, we get that $\rC^*_{\max}(\A_r(F^+))$ is RFD.
%\qed
%\end{example}

\subsection{Operator algebras of functions}\label{SS:OAfunctions}
Let $\Omega\subset \bC^n$ be a subset and let $\bT\subset \bC$ denote the unit circle. Assume that $\Omega$ is \emph{balanced}, so that $\zeta z\in \Omega$ for every $z\in \Omega$ and every $\zeta\in \bT$. In this case, given a function $f:\Omega\to \bC$ and $\zeta\in \bT$, we define $f_\zeta:\Omega\to \bC$ to be the function
\[
f_\zeta(z)=f(\zeta z), \quad z\in \Omega.
\]
Let $\A$ be an operator algebra consisting of functions on $\Omega$.  We say that $\A$ is \emph{$\bT$-symmetric}  if for each $\zeta\in \bT$, the map 
\[
f\mapsto f_\zeta, \quad f\in \A
\]
defines a completely isometric isomorphism $\Phi_\zeta:\A\to \A$. It would certainly be natural here to impose some form of continuity on the map  $\zeta\mapsto \Phi_\zeta$. Fortunately, in our cases of interest this turns out to be automatic.

\begin{lemma}\label{L:autcont}
Let $\Omega\subset\bC^n$ be a balanced subset and let $\A$ be a $\bT$-symmetric operator algebra of functions on $\Omega$. Assume that there is a collection of homogeneous polynomials spanning a dense subset of $\A$. Then, the map
\[
\zeta\mapsto \Phi_\zeta(f), \quad \zeta\in \bT
\]
is norm continuous for every $f\in \A$.
\end{lemma}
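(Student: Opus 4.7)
The plan is to exploit the density hypothesis together with the fact that each $\Phi_\zeta$ is an isometry (so the family is uniformly bounded) via a standard three-$\epsilon$ argument, reducing everything to the case of homogeneous polynomials.

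First, I would dispose of the homogeneous polynomial case. If $p:\Omega \to \bC$ is a homogeneous polynomial of degree $d$, then by definition $p(\zeta z) = \zeta^d p(z)$ for every $z \in \Omega$ and $\zeta \in \bT$, so $\Phi_\zeta(p) = \zeta^d p$. Hence $\|\Phi_\zeta(p) - \Phi_{\zeta'}(p)\| = |\zeta^d - \zeta'^d|\,\|p\|$, and continuity in $\zeta$ is immediate. By linearity, continuity also holds for every finite linear combination of homogeneous polynomials.

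Next I would upgrade to arbitrary $f \in \A$. Fix $f \in \A$ and $\epsilon > 0$. By the density hypothesis, choose a finite linear combination $q$ of homogeneous polynomials with $\|f - q\| < \epsilon/3$. Since each $\Phi_\zeta$ is (completely) isometric, $\|\Phi_\zeta(f-q)\| = \|f-q\| < \epsilon/3$ uniformly in $\zeta \in \bT$. By the previous paragraph there is a neighborhood $U$ of any fixed $\zeta_0 \in \bT$ such that $\|\Phi_\zeta(q) - \Phi_{\zeta_0}(q)\| < \epsilon/3$ for $\zeta \in U$. The triangle inequality
\[
\|\Phi_\zeta(f) - \Phi_{\zeta_0}(f)\| \leq \|\Phi_\zeta(f-q)\| + \|\Phi_\zeta(q) - \Phi_{\zeta_0}(q)\| + \|\Phi_{\zeta_0}(q-f)\|
\]
then yields $\|\Phi_\zeta(f) - \Phi_{\zeta_0}(f)\| < \epsilon$ for $\zeta \in U$, which is the desired continuity.

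There is no real obstacle here; the only point that requires a moment of care is that the uniform bound $\|\Phi_\zeta\| \leq 1$ (used implicitly in the first and third terms above) is granted by the hypothesis that each $\Phi_\zeta$ is a completely isometric isomorphism of $\A$. So the lemma is essentially a soft consequence of the definition of $\bT$-symmetry combined with the polynomial density assumption.
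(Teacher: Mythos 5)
Your argument is correct and is essentially the paper's own proof: the paper likewise observes that $\Phi_\zeta(f)=\zeta^n f$ for a homogeneous polynomial $f$ of degree $n$ and then invokes a standard $\frac{\eps}{3}$-argument based on the density hypothesis and the isometric nature of each $\Phi_\zeta$. Nothing further is needed.
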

\begin{proof}
Given $f\in \A$, we define a function $\delta_0(f):\bT\to \A$ as
\[
\delta_0(f)(\zeta)= \Phi_\zeta(f), \quad \zeta\in \bT.
\]
For a homogeneous polynomial $f\in \A$ of degree $n$, we see that
\[
\delta_0(f)(\zeta)=\zeta^n f, \quad \zeta\in \bT
\]
so in particular $\delta_0(f)$ is continuous.  A standard $\frac{\eps}{3}$-argument using our density assumption shows that in fact $\delta_0(a)$ is continuous for every $a\in \A$. 
\end{proof}

Next, we show that $\bT$--symmetric operator algebras admit RFD coactions by $\bN$. 

\begin{theorem}\label{T:coactionAsymm}
Let $\Omega\subset\bC^n$ be a balanced subset and let $\A$ be a $\bT$-symmetric operator algebra of functions on $\Omega$. Assume that there is a collection of homogeneous polynomials spanning a dense subset of $\A$. Then, $\A$ admits an RFD coaction by $\bN$. In particular, $\rC^*_{\max}(\A)$ is RFD.
\end{theorem}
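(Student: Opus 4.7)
The plan is to build an explicit coaction $\delta$ of $\bN$ on $\A$ from the gauge action $\Phi$, verify that its spectral subspaces are finite-dimensional, and then apply Theorem \ref{T:coactionC*maxRFD}. By Lemma \ref{L:autcont}, for each $f\in \A$ the map $\zeta \mapsto \Phi_\zeta(f)$ is norm continuous on $\bT$, so there is a homomorphism $\tilde\delta:\A \to \A\otimes_{\min} C(\bT)$ given by $\tilde\delta(f)(\zeta) = \Phi_\zeta(f)$. Because each $\Phi_\zeta$ is completely isometric,
\[
\|\tilde\delta^{(k)}(A)\|=\sup_{\zeta \in \bT}\|\Phi_\zeta^{(k)}(A)\|=\|A\|
\]
for every $A \in \bM_k(\A)$, so $\tilde\delta$ is completely isometric. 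On a homogeneous polynomial $f$ of degree $d$ in $\A$ we have $\tilde\delta(f)=f\otimes z^d$, so the density hypothesis forces $\tilde\delta(\A)\subset \A\otimes_{\min} \rA(\bD)$. Since the left regular representation of $\bN$ is the unilateral shift, von Neumann's inequality produces a completely isometric identification $\A_r(\bN) \cong \rA(\bD)$ via $\lambda_d \leftrightarrow z^d$, and injectivity of the minimal tensor product therefore yields a completely isometric homomorphism $\delta:\A \to \A\otimes_{\min} \A_r(\bN)$ satisfying $\delta(f) = f\otimes \lambda_d$ whenever $f$ is homogeneous of degree $d$.

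Next I would identify the spectral subspaces of $\delta$. Integration against the characters of $\bT$ provides, for each $d\in \bN$, a contractive idempotent $P_d:\A\to\A$ defined by the Bochner integral $P_d(f) = \int_{\bT}\zeta^{-d}\Phi_\zeta(f)\,d\zeta$, whose image is precisely $\A^{\delta}_d =\{f\in \A : \Phi_\zeta(f)=\zeta^d f \text{ for every } \zeta \in \bT\}$. Since $P_d$ annihilates homogeneous polynomials of degree different from $d$ and fixes those of degree $d$, the image $\A^{\delta}_d = P_d(\A)$ is contained in the finite-dimensional span of the degree-$d$ polynomials from the given collection. Moreover, $\A^{\delta}_d$ contains every homogeneous polynomial of degree $d$ in $\A$, so $\sum_{d\in \bN}\A^{\delta}_d$ is dense in $\A$; hence $\delta$ is indeed a coaction.

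To see that $\delta$ is RFD, I would apply Proposition \ref{P:ArP} to the additive monoid $\bN$, where $\R_p=\L_p=\{0,1,\dots,p\}$, so for any finite $F\subset \bN$ we have $\bigcup_{p\in F}\bigcup_{r\in \R_p}\L_r = \{0,1,\dots,\max F\}$ and $\lambda^{-1}(\ker \pi_F) = \{m \in\bN : m>\max F\}$. The quotient $\Q_F$ is then $\A$ modulo the closed ideal generated by $\sum_{d>\max F}\A^{\delta}_d$, and the image of the finite-dimensional subspace $\sum_{d\le \max F}\A^{\delta}_d$ is dense in $\Q_F$ because $\sum_{d\in \bN} \A^{\delta}_d$ is already dense in $\A$. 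Hence $\Q_F$ is finite-dimensional and $\rC^*_{\max}(\Q_F)$ is RFD by \cite[Theorem 5.1]{CR2018rfd}. Finally, the identification $\A_r(\bN)\cong \rA(\bD)$ supplies a character $\chi$ (evaluation at $1$) with $\chi(\lambda_d)=1$ for every $d$, and $\A$ is separable since its dense polynomial subspace has countable Hamel dimension; Theorem \ref{T:coactionC*maxRFD} therefore yields that $\rC^*_{\max}(\A)$ is RFD.

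The main technical point is ensuring that $\tilde\delta$ actually takes values in $\A\otimes_{\min}\A_r(\bN)$ rather than merely in $\A\otimes_{\min} C(\bT)$, which is where the absence of negative Fourier modes---ensured by the density of homogeneous polynomials---is essential. The remaining steps are then largely bookkeeping, relying on the Fourier projection to pin down the spectral subspaces as finite-dimensional, and on the fact that finite-dimensional operator algebras have RFD maximal $\rC^*$-cover.
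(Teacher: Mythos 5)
Your proposal is correct and follows essentially the same route as the paper: transfer the gauge action to a completely isometric map into $\A\otimes_{\min}\rC(\bT)$, observe via the homogeneous polynomials that it lands in $\A\otimes_{\min}\rA(\bD)\cong\A\otimes_{\min}\A_r(\bN)$, identify the spectral subspaces with the (finite-dimensional) spaces of homogeneous polynomials so the quotients $\Q_F$ are finite-dimensional, and conclude with the character on $\rC^*_r(\bN)$ and Theorem \ref{T:coactionC*maxRFD}. Your explicit Fourier-projection argument for the spectral subspaces and the remark on separability merely fill in details the paper leaves implicit.
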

\begin{proof}
Given $f\in \A$, we define a function $\delta_0(f):\bT\to \A$ as
\[
\delta_0(f)(\zeta)= \Phi_\zeta(f), \quad \zeta\in \bT.
\]
By Lemma \ref{L:autcont}, we see that $\delta_0(f)\in \rC(\bT;\A)$ for every $f\in \A$. Hence, we obtain a completely isometric homomorphism $\delta_0:\A\to \rC(\bT; \A)$. 
Next,  it is a consequence of \cite[Proposition 12.5]{paulsen2002} and of the injectivity of the minimal tensor product that there is a completely isometric isomorphism $\Theta: \A\otimes_{\min} \rC(\bT) \to \rC(\bT; \A)$ such that 
\[
\Theta(a\otimes g)(\zeta)=g(\zeta)a, \quad \zeta\in \bT
\]
for every $a\in \A$ and $g\in \rC(\bT)$. If $f\in \A$ is a homogeneous polynomial of degree $n$, then we see that
\[
\Theta^{-1}(\delta_0(f))=f\otimes \zeta^n.
\]
We conclude that $(\Theta^{-1}\circ\delta_0)(\A)\subset \A\otimes_{\min}\rA(\bD)$, where $\rA(\bD)$ denotes the disc algebra.  

Recall now that there is a surjective $*$-representation $\tau:\rC^*_r(\bN)\to \rC(\bT)$ such that
\[
\tau( \lambda^\bN_n)=\zeta^n, \quad n\in \bN
\]
and whose restriction to $\A_r(\bN)$ is completely isometric. In particular, this clearly implies that there is a character $\chi:\rC^*_r(\bN)\to \bC$ with $\chi(\lambda^\bN_n)=1$ for every $n\in \bN$. Putting $\rho=(\tau|_{\A_r(\bN)})^{-1}$, we obtain a completely isometric isomorphism $\rho:\rA(\bD)\to \A_r(\bN)$ such that
\[
\rho(z^n)=\lambda^{\bN}_n, \quad n\in \bN.
\]
Define 
\[
\delta=(\id\otimes \rho)\circ \Theta^{-1}\circ \delta_0:\A\to \A\otimes_{\min} \A_r(\bN),
\]
which is a completely isometric representation. We claim that $\delta$ is an RFD coaction of $\bN$ on $\A$. 

For $n\in \bN$, we let 
\[
\A_n =\{a\in \A: \delta(a)=a\otimes \lambda^\bN_n \}
\]
be the corresponding spectral subspace. By construction, we see that  $a\in \A_n$ if and only if 
\[
\Phi_\zeta(a)=\delta_0(a)(\zeta)=\zeta^n a, \quad \zeta\in \bT.
\]
 Hence, $\A_n$ is the space of all homogeneous polynomials of degree $n$ in $\A$, and in particular is finite-dimensional. By assumption, we must have that $\sum_{n\in \bN} \A_n$ is norm dense in $\A$. This shows that $\delta$ is a coaction of $\bN$ on $\A$. It remains to show that this coaction is in fact RFD.
 
 Let $(\pi_F)$ be the Exel--Loring approximation for the identity  representation of $\A_r(P)$ from Proposition \ref{P:ArP}.
Given a finite subset $F\subset \bN$ and $n\in \bN$, we see from  Proposition \ref{P:ArP} that $\lambda^N_n\in \ker \pi_F$ if $n> \max F$.  Thus, the quotient $\Q_F$ of $\A_r(\bN)$ by the closed ideal generated by
\[
\sum_{n\in \lambda^{-1}(\ker \pi_F)} \A_n
\]
has dimension at most that of 
\[
\sum_{n\leq  \max F} \A_n
\]
and in particular is finite-dimensional. Hence, $\rC^*_{\max}(\Q_F)$ is RFD by \cite[Theorem 5.1]{CR2018rfd}. We conclude that  $\delta$ is indeed an RFD coaction of $\bN$ on $\A$. 
Finally,  invoking Theorem \ref{T:coactionC*maxRFD} we get that $\rC^*_{\max}(\A)$ is RFD.
\end{proof}

Many concrete examples of operator algebras can now be analyzed in light of the previous result.

\begin{corollary}\label{C:AOmega}
Let $\Omega\subset \bC^n$ be a balanced bounded subset and let $\rA(\Omega)$ denote the closure of the polynomials inside the continuous functions $\rC(\ol{\Omega})$. Then, $\rC^*_{\max}(\rA(\Omega))$ is RFD.
\end{corollary}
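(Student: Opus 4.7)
The plan is to verify that $\rA(\Omega)$ meets all the hypotheses of Theorem \ref{T:coactionAsymm}, whereupon the conclusion is immediate. Three things must be checked: (a) $\rA(\Omega)$ is a $\bT$-symmetric operator algebra of functions on a balanced set; (b) homogeneous polynomials span a norm-dense subset of $\rA(\Omega)$; (c) the ambient set on which the $\bT$-action takes place is balanced. The last point is essentially free: since $\Omega$ is balanced, continuity of scalar multiplication forces $\ol{\Omega}$ to be balanced as well, so each map $z\mapsto \zeta z$ for $\zeta\in\bT$ is a homeomorphism of $\ol{\Omega}$.

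For (a), I would first observe that $\rA(\Omega)$ is a uniform algebra sitting inside the commutative $\rC^*$-algebra $\rC(\ol\Omega)$, and I would endow it with the operator algebra structure inherited from this inclusion. For each $\zeta\in\bT$, the homeomorphism $\alpha_\zeta:\ol\Omega\to\ol\Omega$ defined by $\alpha_\zeta(z)=\zeta z$ induces a $*$-automorphism $\widetilde\Phi_\zeta:\rC(\ol\Omega)\to\rC(\ol\Omega)$ given by $\widetilde\Phi_\zeta(f)=f\circ\alpha_\zeta=f_\zeta$. This $*$-automorphism is automatically completely isometric, and it preserves polynomials (sending a polynomial $p$ to the polynomial $p_\zeta$, still of the same total degree). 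Consequently $\widetilde\Phi_\zeta$ restricts to a completely isometric algebra automorphism $\Phi_\zeta$ of the norm closure $\rA(\Omega)$ of the polynomials, which is exactly the $\bT$-symmetry required.

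For (b), I would use the fact that every polynomial $p\in\bC[z_1,\ldots,z_n]$ decomposes as a finite sum $p=\sum_{k=0}^d p_k$ of its homogeneous components. Since the polynomials are dense in $\rA(\Omega)$ by definition, the linear span of the homogeneous polynomials is dense as well.

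With (a), (b), and the balanced property of $\ol\Omega$ in hand, Theorem \ref{T:coactionAsymm} applies directly to the $\bT$-symmetric operator algebra $\rA(\Omega)$ of functions on $\ol\Omega$, yielding the existence of an RFD coaction of $\bN$ on $\rA(\Omega)$ and, in particular, that $\rC^*_{\max}(\rA(\Omega))$ is RFD. No step here poses a genuine obstacle; the only mildly subtle point is asserting that $\Phi_\zeta$ is \emph{completely} (and not merely) isometric, but this is handled by exhibiting it as the restriction of a $*$-automorphism of $\rC(\ol\Omega)$.
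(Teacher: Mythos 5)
Your proposal is correct and takes essentially the same approach as the paper: both reduce the corollary to Theorem \ref{T:coactionAsymm} by checking that $\rA(\Omega)$ is a $\bT$-symmetric operator algebra of functions (the density of homogeneous polynomials and the balancedness of $\ol{\Omega}$ being immediate). The only variation is in how complete isometry of $\Phi_\zeta$ is justified: you restrict the $*$-automorphism $f\mapsto f\circ\alpha_\zeta$ of $\rC(\ol{\Omega})$, while the paper verifies isometry directly by the change of variables $w=\zeta z$ on $\ol{\Omega}$ and then cites the automatic complete isometry of maps into a commutative $\rC^*$-algebra; both arguments are valid.
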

\begin{proof}
In view of Theorem \ref{T:coactionAsymm}, it suffices to show that $\rA(\Omega)$ is a $\bT$-symmetric algebra. For $f\in \rA(\Omega)$ and $\zeta\in \bT$, we see that
\[
\|f_\zeta\|=\max_{z\in\ol{\Omega}}|f(\zeta z)|=\max_{w\in \ol{\Omega}}|f(w)|=\|f\|, \quad f\in \rA(\Omega).
\]
This implies that the map $f\mapsto f_\zeta$ defines an isometric isomorphism $\Phi_\zeta:\rA(\Omega)\to \rA(\Omega)$.
The fact that $\Phi_\zeta$ is in fact completely isometric follows then from \cite[Theorem 3.9]{paulsen2002}.
\end{proof}

By taking $\Omega$ above to be the open unit polydisc or the  open unit ball, we conclude from Corollary \ref{C:AOmega} that the maximal $\rC^*$-covers of the polydisc algebra and of the ball algebra are necessarily RFD. This answers a question left open in \cite{CR2018rfd}. 

By definition, the algebra $\rA(\Omega)$ considered in Corollary \ref{C:AOmega} is a \emph{uniform algebra}, as it embeds into the commutative $\rC^*$-algebra $\rC(\ol{\Omega})$. Below, we deal with operator algebras of functions on $\Omega$ which do not necessarily admit such an embedding. 

Let $n$ be a positive integer. Let $\Gamma$ denote the set of (unitary equivalence classes of) commuting $n$-tuples of contractions on some separable Hilbert space. Let $p\in \bC[z_1,\ldots,z_n]$ be a polynomial in $n$ variables. The quantity 
\[
\sup_{(T_1,\ldots,T_n)\in \Gamma}\|p(T_1,\ldots,T_n)\|
\]
is easily seen to be finite and to define a norm on $\bC[z_1,\ldots,z_n]$. The corresponding completion of $\bC[z_1,\ldots,z_n]$ is an operator algebra \cite[Chapter 18]{paulsen2002}, usually referred to as the \emph{Schur--Agler class} \cite{agler1990}. We denote it by $\S_n$. It follows from the classical von Neumann and Ando inequalities that $\S_1\cong \rA(\bD)$ and $\S_2\cong \rA(\bD^2)$. However, for bigger $n$ the Schur--Agler class differs from the polydisc algebra. Nevertheless, we can still prove the following.

\begin{corollary}\label{C:SAclass}
Let $n$ be a positive integer and let $\S_n$ denote the corresponding Schur--Agler class. Then, $\rC^*_{\max}(\S_n)$ is RFD.
\end{corollary}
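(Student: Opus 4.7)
The plan is to apply Theorem \ref{T:coactionAsymm} with the balanced domain $\Omega=\bD^n$. This requires me to first exhibit a $\bT$-symmetry on $\S_n$, and then realize $\S_n$ as an operator algebra of functions on $\bD^n$ in a way compatible with this symmetry.

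For the first task, given $\zeta\in\bT$ and a polynomial $p\in\bC[z_1,\ldots,z_n]$, define $\Phi_\zeta(p)=p_\zeta$. The identity $p_\zeta(T_1,\ldots,T_n)=p(\zeta T_1,\ldots,\zeta T_n)$ combined with the observation that $(T_1,\ldots,T_n)\mapsto(\zeta T_1,\ldots,\zeta T_n)$ is a bijection of $\Gamma$ with itself yields $\|\Phi_\zeta(p)\|_{\S_n}=\|p\|_{\S_n}$. Applying the same reasoning to matrix-valued polynomials shows that $\Phi_\zeta$ is a complete isometry of the polynomial subalgebra, and by density it extends to a completely isometric automorphism of $\S_n$ with inverse $\Phi_{\bar\zeta}$.

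For the second task, each $z\in\bD^n$ yields a commuting $n$-tuple of scalar contractions and hence a character $\chi_z:\S_n\to\bC$ with $\chi_z(p)=p(z)$ on polynomials. Together these assemble into a contractive homomorphism $\iota:\S_n\to H^\infty(\bD^n)$ via $\iota(f)(z)=\chi_z(f)$. The main obstacle is to show that $\iota$ is injective, for this will allow me to transport the operator algebra structure of $\S_n$ onto its image and legitimately regard $\S_n$ as an operator algebra of functions on the balanced set $\bD^n$. I intend to handle injectivity via Fourier analysis against the $\bT$-action. Since the polynomials are dense in $\S_n$ and decompose into finite sums of monomials (which are homogeneous), Lemma \ref{L:autcont} ensures that $\zeta\mapsto\Phi_\zeta(f)$ is norm continuous for every $f\in\S_n$. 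The Bochner integrals
\[
E_k(f)=\frac{1}{2\pi}\int_0^{2\pi}e^{-ik\theta}\Phi_{e^{i\theta}}(f)\,d\theta\in\S_n,\quad k\geq 0,
\]
are therefore well-defined bounded operators on $\S_n$ which restrict on polynomials to the extraction of the degree-$k$ homogeneous part. Since the subspace $V_k\subset\S_n$ of homogeneous polynomials of degree $k$ is finite-dimensional and hence closed, $E_k(\S_n)\subset V_k$.

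Now if $f\in\ker\iota$, the identity $\chi_z\circ\Phi_\zeta=\chi_{\zeta z}$ (clear on polynomials and then on $\S_n$ by density) forces $\chi_z(E_k(f))=0$ for every $z\in\bD^n$; as $E_k(f)$ is a polynomial, it must vanish identically. Abel summation via the Poisson kernel, applied to the continuous $\S_n$-valued function $\zeta\mapsto\Phi_\zeta(f)$, then yields $f=\lim_{r\to 1^-}\sum_{k\geq 0}r^k E_k(f)=0$, establishing injectivity of $\iota$. With $\iota$ injective, $\S_n$ is identified with a subalgebra of functions on the balanced set $\bD^n$, the automorphism $\Phi_\zeta$ becomes the rescaling $f\mapsto f_\zeta$, and the polynomials, dense in $\S_n$, are spanned by homogeneous monomials. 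Theorem \ref{T:coactionAsymm} then provides an RFD coaction of $\bN$ on $\S_n$ and concludes that $\rC^*_{\max}(\S_n)$ is RFD.
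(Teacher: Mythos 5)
Your proof is correct and takes essentially the same route as the paper: rotation-invariance of the class of commuting contractive $n$-tuples yields the $\bT$-symmetry of $\S_n$ as a function algebra on the balanced set $\bD^n$, and Theorem \ref{T:coactionAsymm} finishes the argument. The only difference is that you explicitly verify injectivity of the evaluation map into functions on $\bD^n$ via the Fourier/Abel-summation argument, a point the paper simply asserts as the contractive inclusion $\S_n\hookrightarrow \rA(\bD^n)$.
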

\begin{proof}
By definition we see that
\[
|p(z)|\leq \|p\|_{\S_n}, \quad z\in \bD^n
\]
for every polynomial $p$ in $n$ variables, so that there is a contractive inclusion $\S_n\hookrightarrow \rA(\bD^n)$. Next, we note that if $(T_1,\ldots,T_n)$ is a commuting $n$-tuple of contractions, then so is $(\zeta T_1,\zeta T_2,\ldots,\zeta T_n)$ for each $\zeta\in \bT$. Therefore, there is a completely isometric isomorphism $\Phi_\zeta:\S_n\to \S_n$ such that
\[
\Phi_\zeta (f)=f_\zeta, \quad f\in \S_n.
\]
%It follows that the map $\zeta \mapsto \Phi_{\zeta}$ is continuous. 
Hence, we conclude that $\S_n$ is a $\bT$--symmetric operator algebra of functions on $\bD^n$, so that $\rC^*_{\max}(\S_n)$ is RFD by Theorem \ref{T:coactionAsymm}.
\end{proof}

For our next application, we will make use of standard terminology from the theory of reproducing kernel Hilbert spaces; details can be found in \cite{agler2002} and \cite{hartz2017isom}.

Let $\Omega\subset \bC^n$ be a balanced open connected subset containing the origin. Let $K:\Omega\times\Omega\to \bC$ be a kernel function normalized at the origin, so that $K(z,0)=1$ for every $z\in \Omega$. We say that $K$ is \emph{$\bT$-invariant} if  we have that
\begin{enumerate}
\item $K$ is holomorphic in the first variable,
\item for every $z,w\in \Omega$ the map
\[
\zeta \mapsto K(\zeta z,w), \quad \zeta\in \bT
\] is continuous,

\item for every $z,w\in \Omega$ and $\zeta\in \bT$ we have that $K(z,w) =K(\zeta z,\zeta w)$.
\end{enumerate}
Many important classical kernels satisfy these properties, such as the Drury--Arveson kernel and the Dirichlet kernel; see \cite[Example 6.1]{hartz2017isom}.

Let $\fH$ be the reproducing kernel Hilbert space corresponding to $K$ and let $\Mult(\fH)$ denote its multiplier algebra. By our assumption on $K$, we see that $\fH$ consists of holomorphic functions on $\Omega$. Furthermore, because $1\in \fH$ we infer $\Mult(\fH)\subset \fH$. For each $\zeta\in \bT$, the map $\Gamma_\zeta:\fH\to \fH$ defined as
\[
(\Gamma_\zeta(f))(z)=f(\zeta z), \quad f\in \fH, z\in \Omega
\]
is easily checked to be a strongly continuous unitary path (see \cite[Section 6]{hartz2017isom}). For each $\zeta\in \bT$ and $\phi\in \Mult(\fH)$, we note that
\[
\Gamma_\zeta^* M_\phi \Gamma_\zeta= M_{\Gamma_{\ol{\zeta}}(\phi)}.
\]
Hence, we obtain a completely isometric isomorphism $\Theta_\zeta:\Mult(\fH)\to \Mult(\fH)$ defined as
\[
\Theta_\zeta(M_\phi)=M_{\Gamma_\zeta(\phi)}, \quad \phi\in \Mult(\fH).
\]
%such that the map $\zeta \mapsto \Theta_{\zeta}$ is continuous. 
For each $n\in \bN$, we let $\rA(\fH)_n\subset \Mult(\fH)$ be the subspace consisting of those multipliers $\phi$ such that $\Theta_\zeta(M_\phi)=\zeta^n M_\phi$ for every $\zeta\in \bT$.
As explained in \cite[Example 6.1 (i)]{hartz2017isom}, for $n\geq 0$ we get that $\rA(\fH)_n$ consists of homogeneous polynomials of degree $n$, while $\rA(\fH)_n=\{0\}$ for $n<0$.
We define $\rA(\fH)$ to be the norm closure of $\sum_{n=0}^\infty \rA(\fH)_n$ inside of $\Mult(\fH)$. 

We now arrive at another application of Theorem \ref{T:coactionAsymm}. 

\begin{corollary}\label{C:RKHS}
Let $\Omega\subset \bC^n$ be a balanced open connected set containing the origin. Let $K$ be a $\bT$-invariant kernel on $\Omega$ and let $\fH$ be the corresponding reproducing kernel Hilbert space. Then, $\rC^*_{\max}(\mathrm{A}(\fH))$ is RFD.
\end{corollary}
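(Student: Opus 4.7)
The plan is to apply Theorem \ref{T:coactionAsymm} directly to the operator algebra $\rA(\fH)$ viewed as an algebra of (holomorphic) functions on $\Omega$, so the task reduces to verifying the two hypotheses of that theorem: namely, that $\rA(\fH)$ is $\bT$-symmetric, and that it contains a family of homogeneous polynomials with dense linear span.

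First I would note that density of homogeneous polynomials is essentially built into the definition of $\rA(\fH)$. By the discussion just before the corollary, each spectral subspace $\rA(\fH)_n$ (for $n \geq 0$) consists of homogeneous polynomials of degree $n$, and $\rA(\fH)$ is defined as the norm closure of $\sum_{n=0}^\infty \rA(\fH)_n$ inside $\Mult(\fH)$. Hence $\sum_{n\geq 0}\rA(\fH)_n$ is a dense subspace of $\rA(\fH)$ consisting of finite linear combinations of homogeneous polynomials.

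Next I would verify $\bT$-symmetry. The completely isometric automorphisms $\Theta_\zeta$ of $\Mult(\fH)$ defined before the corollary satisfy $\Theta_\zeta(M_\phi) = M_{\Gamma_\zeta(\phi)}$, and by definition $(\Gamma_\zeta(\phi))(z) = \phi(\zeta z) = \phi_\zeta(z)$, so on the level of functions $\Theta_\zeta$ is precisely the map $\phi \mapsto \phi_\zeta$. By the very definition of $\rA(\fH)_n$ we have $\Theta_\zeta(\rA(\fH)_n) = \rA(\fH)_n$ (with $\Theta_\zeta$ acting as the scalar $\zeta^n$ on this subspace), so $\Theta_\zeta$ preserves the dense subspace $\sum_n \rA(\fH)_n$ and, by continuity together with the identity $\Theta_\zeta \Theta_{\ol\zeta} = \id$, restricts to a completely isometric automorphism of $\rA(\fH)$. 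This shows $\rA(\fH)$ is $\bT$-symmetric as an operator algebra of functions on $\Omega$.

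With both hypotheses in place, Theorem \ref{T:coactionAsymm} applies and yields an RFD coaction of $\bN$ on $\rA(\fH)$, whence $\rC^*_{\max}(\rA(\fH))$ is RFD. There is no real obstacle here; the only subtlety is making sure that $\Theta_\zeta$ genuinely implements the pointwise action $\phi \mapsto \phi_\zeta$ on the common function space, and that the spectral subspaces coincide with spaces of homogeneous polynomials — both of which are recorded in the setup preceding the corollary.
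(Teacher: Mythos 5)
Your proposal is correct and follows essentially the same route as the paper: both verify the hypotheses of Theorem \ref{T:coactionAsymm} by observing that the homogeneous subspaces $\rA(\fH)_n$ are dense by definition and that the automorphisms $\Theta_\zeta$ restrict to completely isometric automorphisms $\Phi_\zeta$ of $\rA(\fH)$ implementing $\phi\mapsto\phi_\zeta$. The paper merely adds a brief preliminary remark (using that the multiplier norm dominates the supremum norm) to record that elements of $\rA(\fH)$ are genuine holomorphic functions on $\Omega$, a point you implicitly cover via the inclusion $\Mult(\fH)\subset\fH$ from the setup.
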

\begin{proof}
We make a preliminary observation.  Let $\phi\in \rA(\fH)$. By definition of $\rA(\fH)$, there is a sequence $(p_n)$ of polynomials converging to $\phi$ in the norm topology of $\Mult(\fH)$. Since the multiplier norm always dominates the supremum norm over $\Omega$, we conclude that the sequence $(p_n)$ converges to $\phi$ uniformly on $\Omega$, and thus $\phi$ is holomorphic on $\Omega$. Hence, $\rA(\fH)$ is an operator algebra of holomorphic functions on $\Omega$.

In view of Theorem \ref{T:coactionAsymm}, it suffices to show that $\rA(\fH)$ is a $\bT$-symmetric algebra. For each $\zeta\in \bT$, we let $\Phi_\zeta=\Theta_\zeta|_{\rA(\fH)}$. It is clear that $\Phi_\zeta( \rA(\fH)_n)\subset \rA(\fH)_n$ for every $n\geq 0$, so that $\Phi_\zeta( \rA(\fH))\subset \rA(\fH)$. Thus, $\Phi_\zeta:\rA(\fH)\to \rA(\fH)$ is a completely isometric isomorphism. 
\end{proof}

A special class of $\bT$-invariant kernels is that of regular unitarily invariant kernels on the ball $\bB_n$. In this case, the algebra $\rA(\fH)$ was studied in \cite{AHMR2020subh} from the point of view of residual finite-dimensionality. It was shown therein (see the proof of  \cite[Theorem 4.2]{AHMR2020subh}) that the stronger requirement that $\rA(\fH)$ be subhomogeneous imposes tremendous restrictions, and in fact implies that $\rA(\fH)$ can be identified with the ball algebra. This provides further impetus to revealing whether $\rA(\fH)$ enjoys other refinements of residual finite-dimensionality.  Corollary \ref{C:RKHS} provides the first instance in this direction.

%\vspace{3mm}
%
%\noindent \textbf{Conflict of interest statement.} On behalf of all authors, the corresponding author states that there is no conflict of interest. 
%
%
%\vspace{3mm}
%
%\noindent \textbf{Data availability statement.} This manuscript has no associated data.

%%%%%%%%%%%%%%%%%%%%%%%%%%%%%%%%%%%%%%%%%%%%%%%%%%%%%%%%%%%%%%%%%%%%%%%%%%%%%%%%%%%%%%%%%%%%%%%%%%%%%%%%%%%%%%%%%%%%%%%%%%%%%%%%%%%%%%%%%%%%%%%%%%%%%%%%%%%%%%%%%%%%%%%%%%%%%%%%%%%%%%%%%%%%%%%%%%%%%%%%%%%%%%%%%%%%%%%%%%%%%%%%%%%%%%%%%%%%%%%%%%%%%%%%%%%%
%%%%%%%%%%%%%%%%%%%%%%%%%%%%%%%%%%%%%%%%%%%%%%%%%%%%%%%%%%%%%%%%%%%%%%%%%%%%%%%%%%%%%%%%%%%%%%%%%%%%%%%%%%%%%%%%%%%%%%%%%%%%%%%%%%%%%%%%%%%%%%%%%%%%%%%%%%%%%%%%%%%%%%%%%%%%%%%%%%%%%%%%%%%%%%%%%%%%

%%%%%%%%%%%%%%%%%%%%%%%%%%%%%%%%%%%%%%%%%%%
\bibliography{biblio_fdimapprox}
%\bibliography{\jobname}
%\bibliography{/Users/Raphael/Dropbox/Research/Bibliography/biblio_main}
\bibliographystyle{plain}

\end{document}